 \definecolor{darkgreen}{rgb}{0,0.5,0}
 \definecolor{darkblue}{rgb}{0,0.08,0.45}
 \definecolor{someblue}{rgb}{0,0.08,0.9}
\definecolor{rust}{rgb}{0.6,0.1,0.1}
\newtheorem{theorem}{Theorem}[section]
\newtheorem{corollary}[theorem]{Corollary}
\newtheorem{lemma}[theorem]{Lemma}
\theoremstyle{definition}
\newtheorem{remark}[theorem]{Remark}
\newtheorem{definition}{Definition}
\definecolor{lavendergray}{rgb}{0.77, 0.76, 0.82}
\definecolor{lightgray}{rgb}{0.83, 0.83, 0.83}
\definecolor{mygray}{gray}{0.9}
\newmdtheoremenv[backgroundcolor=mygray,innertopmargin=-2pt]{assumption}{Assumption}
\newcommand{\n}[1]{\|#1 \|}
\DeclareMathOperator{\E}{\mathbb{E}}
\renewcommand{\a}{\alpha}
\renewcommand{\d}{\delta}
\renewcommand{\phi}{\varphi}
\renewcommand{\t}{\tau}
\newcommand{\s}{\sigma}
\newcommand{\e}{\varepsilon}
\renewcommand{\th}{\theta}
\newcommand{\Gap}{\mathrm{Gap}}
\newcommand{\z}{\bar z}
\newcommand{\w}{\bar w}
\newcommand{\bx}{\mathbf{x}}
\newcommand{\bw}{\mathbf{w}}
\newcommand{\by}{\mathbf{y}}
\newcommand{\bz}{\mathbf{z}}
\newcommand{\bu}{\mathbf{u}}
\newcommand{\bv}{\mathbf{v}}
\newcommand{\bzb}{\mathbf{\bar z}}
\newcommand{\bwb}{\mathbf{\bar w}}
\newcommand{\bub}{\mathbf{\bar u}}
\newcommand{\bvb}{\mathbf{\bar v}}
\newcommand{\R}{\mathbb R}
\newcommand{\cF}{\mathcal F}
\newcommand{\cO}{\mathcal O}
\newcommand{\cX}{\mathcal X}
\newcommand{\cY}{\mathcal Y}
\newcommand{\cC}{\mathcal C}
\newcommand{\cZ}{\mathcal Z}
\newcommand{\Fr}{\mathrm{Frob}}
\renewcommand{\empty}{\varnothing}
\newcommand{\tr}{\top}
\newcommand{\dgf}{h} %
\newcommand{\grad}{\nabla \dgf} %
\newcommand{\D}{D}
\newcommand{\DD}[2]{\dgf(#1)-\dgf(#2)-\lr{\grad(#2), #1-#2}}
\newcommand{\Fhat}{\widehat F}
\newcommand{\one}{\mathbbm{1}}
\newcommand{\Ex}[2]{\mathbb{E}_{#2} \left[ #1 \right]}
\newcommand{\lr}[1]{\langle #1\rangle}
\newcommand{\Sol}{\mathsf{Sol}}
\DeclareMathOperator{\prox}{prox}
\DeclareMathOperator*{\argmin}{argmin}
\DeclareMathOperator{\dom}{dom}
\DeclareMathOperator{\nnz}{nnz}
\DeclareMathOperator{\rank}{rank}
\DeclareMathOperator{\sign}{sign}
\title{Stochastic Variance Reduction for Variational Inequality Methods}
\author{Ahmet Alacaoglu\footnote{University of Wisconsin-Madison, USA,
    email: \href{mailto:alacaoglu@wisc.edu}{alacaoglu@wisc.edu}}
  \qquad \qquad Yura Malitsky\footnote{Link\"oping University, Sweden, email: \href{mailto:yurii.malitskyi@liu.se}{yurii.malitskyi@liu.se}
 }}
\date{}
\begin{document}

\maketitle

\begin{abstract}
  We propose stochastic variance reduced algorithms for solving convex-concave saddle point problems, monotone variational inequalities, and monotone inclusions. Our framework applies to extragradient, forward-backward-forward, and forward-reflected-backward methods both in Euclidean and Bregman setups. All proposed methods converge in the same setting as their deterministic counterparts and they either match or improve the best-known complexities for solving structured min-max problems. Our results reinforce the correspondence between variance reduction in variational inequalities and minimization. We also illustrate the improvements of our approach with numerical evaluations on matrix games.
\end{abstract}

\section{Introduction}
In this paper, we focus on solving variational inequalities (VI):
\begin{equation}\label{eq: prob_vi}
\text{find~~~} \bz_\ast \in \mathcal{Z}\text{~~~such that~~~}\langle F(\bz_\ast), \bz - \bz_\ast  \rangle + g(\bz) - g(\bz_\ast) \geq 0,\quad \forall \bz\in\mathcal{Z},
\end{equation}
where $F$ is a monotone operator and $g$ is a proper convex
lower semicontinuous function.
This formulation captures optimality conditions for minimization/saddle
  point problems, see~\cite[Section 1.4.1]{facchinei2007finite}.

In the last decade there have been at least two surges of interest to
VIs. Both were motivated by the need to solve min-max problems. The first surge
came from the realization that many nonsmooth problems can be solved more
efficiently if they are formulated as saddle point problems~\cite{nesterov2005smooth,nemirovski2004prox,chambolle2011first,esser2010general}.
The second has been started by machine learning community, where
solving nonconvex-nonconcave saddle point problems became of paramount
importance~\cite{gidel2018variational,gemp2018global,mertikopoulos2018optimistic}.
Additionally, VIs  have
    applications in game theory, control theory, and differential equations, see~\cite{facchinei2007finite}.

    A common structure encountered in min-max problems is that the
    operator $F$ can be written as a finite-sum:
    $F = F_1+\dots + F_N$, see~\Cref{sec: apps} for concrete examples.  Variance reduction techniques use this
    specific form to improve the complexity of deterministic
    methods in minimization. Existing results on variance reduction
    for saddle point problems show that these techniques improve the
    complexity for bilinear problems compared to deterministic
    methods.  However, in general these methods require stronger
    assumptions to converge than the latter do (see~\Cref{table:
      t1}). At the same time, stochastic methods that have been shown to converge
    under only monotonicity do not have complexity advantages over the
    deterministic methods.

Such a dichotomy does not exist in minimization: variance reduction comes with no extra assumptions.  This points out to a
fundamental lack of understanding for its use in saddle point
problems. Our work shows that there is indeed a natural correspondence
between variance reduction in variational inequalities and
minimization.
In particular, we propose stochastic variants of
extragradient (EG), forward-backward-forward (FBF), and
forward-reflected-backward (FoRB) methods which converge under mere
monotonicity. For the bilinear case our results match the best-known
complexities, while for the nonbilinear, we do not require bounded domains
as in the previous work and we improve the best-known complexity by a
logarithmic factor, using simpler algorithms.
Recently,~\cite{han2021lower} established the optimality of our algorithms with matching lower bounds, for solving (potentially nonbilinear) convex-concave min-max problems with finite sum form.

We also show application of our techniques for solving monotone
inclusions and strongly monotone problems. Our results for monotone
inclusions potentially improve the rate of deterministic methods
(depending on the Lipschitz constants) and they seem to be the first
such result in the literature.  We illustrate practical benefits
of our new algorithms by comparing with deterministic methods and an
existing  variance reduction scheme in~\Cref{sec: numexp}.

\subsection{Related works}\label{sec: rel_work}
\paragraph{Variational inequalities.}
The standard choices for solving VIs have been methods such as
extragradient (EG)/Mirror-Prox
(MP)~\cite{korpelevich1976extragradient,nemirovski2004prox},
forward-backward-forward (FBF)~\cite{tseng2000modified}, dual extrapolation~\cite{nesterov2007dual} or
reflected gradient/forward-reflected-backward
(FoRB)~\cite{malitsky2015projected,malitsky2018forward}\footnote{In the unconstrained
  setting, this method is also known as Optimistic Mirror Descent
  (OMD) or Optimistic Gradient Descent Ascent
  (OGDA)~\cite{rakhlin2013online,daskalakis2018training} and is also equivalent to the classical Popov's method~\cite{popov1980modification}}.
  These
methods differ in the number of operator calls and projections (or
proximal operators) used each iteration, and consequently, can be
preferable to one another in different settings.  The standard
convergence results for these algorithms include global iterates'
convergence, complexity $\cO(\e^{-1})$ for monotone
problems and linear rate of convergence for strongly monotone
problems.
\begin{table}
\captionsetup{font=small}
\centering
\begin{tabular}{lll}\toprule
     & \textbf{Assumptions} & \textbf{Complexity} \\\midrule
EG/MP, FBF, FoRB$^\dagger$ & $F$ is monotone & $\cO\left(\frac{  N L_F}{\varepsilon} \right)$\\[1mm]
\midrule
 EG/MP$^\ddagger$ & \makecell[l]{$F$ is monotone \& $\bz \mapsto \langle
  F(\bz)+\tilde{\nabla} g(\bz), \bz - \bu \rangle$ \\is convex  for any $ \bu$} & $\cO\left( N+\frac{\sqrt{N} L}{\varepsilon}
                     \right)$ \\[1mm]
\midrule
 EG/MP$^\ddagger$ & $F$ is monotone \&
  bounded domains & $\tilde{\cO}\left( N+\frac{\sqrt{N} L}{\varepsilon}
                     \right)$ \\[1mm]
\midrule
FoRB$^*$ & $F$ is monotone & $\cO\left( N+\frac{N L}{\varepsilon}
\right)$\\[1mm]
\midrule
\makecell[l]{\textbf{This paper}\\EG/MP, FBF, FoRB} & $F$ is monotone & $\cO\left(N+\frac{\sqrt{N} L}{\varepsilon}
                     \right)$\\
\bottomrule
\end{tabular}
\caption{Table of algorithms with $F(\bz) = \sum_{i=1}^N F_{i}(\bz)$. EG: Extragradient, MP: Mirror-Prox,
FBF: forward-backward-forward, FoRB:
forward-reflected-backward.
$\tilde{\nabla} g$ denotes a subgradient of $g$.
$^\dagger$\cite{korpelevich1976extragradient,tseng2000modified,nemirovski2004prox,malitsky2018forward}, $^\ddagger$\cite{carmon2019variance}, $^*$\cite{alacaoglu2020simple}.
}
\label{table: t1}
\end{table}
\paragraph{Variance reduction.}
Variance reduction has revolutionized stochastic methods in optimization. This technique applies to finite sum minimization problem of the form
$\min_{\bx} \frac 1 N\sum_{i=1}^N f_i(\bx)$.
Instead of using a random sample $\mathbf{g}_k = \nabla f_i(\bx_k)$ as SGD does, variance reduction methods use
\begin{equation}
\mathbf{g}_k =\nabla f(\bw_k) + \nabla f_i(\bx_k)-\nabla
  f_i(\bw_k).\label{eq: vr_gk}
  \end{equation}
  A good choice of $\bw_k$ decreases the ``variance''
$\E \n{\mathbf{g_k}-\nabla f(\bx_k)}^2$ compared to
$\E\n{\nabla f_i(\bx_k)-\nabla f(\bx_k)}^2$ that SGD has.  A simple idea that is easy
to explain to undergraduates, easy to implement, and most importantly
that provably brings us a better convergence rate than pure
SGD and GD in a wide range of scenarios. Classical works include
\cite{johnson2013accelerating,defazio2014saga}. For a more thorough
list of references, see the recent review~\cite{gower2020variance}.
\paragraph{Variance reduction and VIs.}
One does not need to be meticulous to quickly find finite sum
problems where existing variance reduction methods do not work. In the
convex world, the first that comes to mind is non-smoothness. As already mentioned, saddle point reformulations often come to
rescue.

The work \cite{balamurugan2016stochastic} was seminal in using
variance reduction for  saddle point problems and monotone inclusions in general.
In particular, the authors studied stochastic variance reduced
variants of forward-backward algorithm and proved linear convergence under strong monotonicity.
For bilinearly coupled problems, the complexity in~\cite{balamurugan2016stochastic} improves the deterministic method in the strongly monotone setting.
\cite{chavdarova2019reducing} developed an extragradient method with variance reduction and analyzed its convergence under strong monotonicity assumption.
Unfortunately, the worst-case complexity in this work was less favorable than~\cite{balamurugan2016stochastic}.

Strong monotonicity may seem like a fine assumption, similar to strong
convexity in minimization.  While algorithmically it is true,
in applications with min-max, the former is far less
frequent. For instance, the operator $F$ associated with a convex-concave saddle point problem is monotone,
but not strongly monotone without further assumptions. Thus, it is crucial to remove this assumption.

An influential work in this direction is by~\cite{carmon2019variance}, where the authors proposed a randomized
variant of Mirror-Prox. The authors focused primarily on matrix games
and for this important case, they improved complexity over
deterministic methods. However, because of this specialization, more
general cases required additional
assumptions. In particular, for problems beyond matrix games, the
authors assumed that either
$\bz \mapsto \langle F(\bz) + \tilde{\nabla} g(\bz), \bz-\bu \rangle$
is convex for all $\bu$~\cite[Corollary 1]{carmon2019variance} or that
domain is bounded~\cite[Algorithm 5, Corollary 2]{carmon2019variance}: in particular, domain diameter is used as a parameter for this algorithm.
As one can check, the former might not hold even for convex
minimization problems with $F=\nabla f$. The latter, on the other
hand, while already restrictive, requires a more complicated
three-loop algorithm, which incurred an additional logarithmic factor
into total complexity.

There are other works that did not improve complexity but
introduced new ideas.  An algorithm similar in spirit to ours is due
to~\cite{alacaoglu2020simple}, where variance reduction is applied to
FoRB. This algorithm was the first to converge under only
monotonicity, but did not improve complexity of deterministic
methods.  Several works studied VI methods in the stochastic
setting and showed slower rates with decreasing step
sizes~\cite{mishchenko2020revisiting,bohm2020two}, or increasing
mini-batch
sizes~\cite{iusem2017extragradient,bot2019forward,cui2019analysis}, or extra assumptions~\cite{gorbunov2022stochastic}.

\subsection{Outline of results and comparisons}\label{sec: outline_results}
Throughout the paper, we assume access to a stochastic oracle $F_\xi$ such that $\mathbb{E}[F_\xi(\bz)] = F(\bz)$.
\paragraph{Complexity and $\varepsilon$-accurate solution.} A point $\bar{\bz}$ is an $\varepsilon$-accurate solution if $\mathbb{E}\left[\Gap(\bar{\bz})\right] \leq \varepsilon$, where $\Gap$ is defined in~\Cref{sec: euc_sub_rate}.
Complexity of the algorithm is defined as the number of calls to
$F_\xi$ to reach an $\varepsilon$-accurate solution. In general,  we suppose that evaluation of $F$ is $N$ times more expensive than
$F_\xi$. For specific problems with bilinear coupling, we measure the complexity in terms of arithmetic operations.
\paragraph{Nonbilinear finite-sum problems.} We consider the problem~\eqref{eq: prob_vi}
with $F = \sum_{i=1}^N F_i$ where $F$ is monotone, $L_F$-Lipschitz, and it is $L$-Lipschitz in mean, in view of~\Cref{asmp: asmp1}(iv).
In this setting, our variance reduced variants of
EG, FBF, and FoRB (\Cref{cor:eucl},~\Cref{cor:
  fbf},~\Cref{cor: forb}) have complexity
$\mathcal{O}\big( N + \sqrt{N}L\varepsilon^{-1}
\big)$ compared to the deterministic methods with
$ \mathcal{O}\left( NL_F\varepsilon^{-1}
\right)$.

Our methods improve over deterministic variants as long as $L \leq
\sqrt{N}L_F$.
This is a similar improvement over deterministic complexity,
as accelerated variance reduction does for minimization problems~\cite{woodworth2016tight,allen2017katyusha}.
To our knowledge, the only precedent with a  result similar to ours is the work
~\cite{carmon2019variance}, where spurious assumptions were
required (see~\Cref{sec: rel_work} and~\Cref{table: t1}), complexity had additional logarithmic terms and a complicated three-loop algorithm was needed.
\paragraph{Bilinear problems.} When we focus on bilinear problems (App.~\ref{sec: bilinear}), the complexity of our methods is
$\tilde{\mathcal{O}}\left( \nnz(A)+ \sqrt{\nnz(A)(m+n)}L \varepsilon^{-1} \right)$,
where $L = \| A\|_\Fr$ with Euclidean setup and $L=\|A\|_{\max}$ with simplex constraints and the entropic setup.
In contrast, the complexity of deterministic method is
$\tilde{\mathcal{O}}\left( \nnz(A) L_F\varepsilon^{-1} \right)$,
where $L_F = \| A\|$ with Euclidean setup and $L_F = \| A\|_{\max}$ with the entropic setup.
Our complexity shows strict improvements over deterministic methods
when $A$ is dense.
Our variance reduced variants for FBF and FoRB enjoy similar guarantees and obtain the same complexities (\Cref{cor: fbf},~\Cref{cor: forb}).

In both settings this complexity was first obtained by~\cite{carmon2019variance}.
Our results generalize the set of problems where this complexity applies due to less assumptions (for example, linearly constrained convex optimization) and also use more practical/simpler algorithms (see~\Cref{sec: numexp} for an empirical comparison).
Note that our variance reduced Mirror-Prox in Alg.~\ref{alg: mp1} is different from the Mirror-Prox variant in~\cite[Alg. 1, Alg. 2]{carmon2019variance}.
\paragraph{How to read the paper?}
We summarize the main results in~\Cref{table: t2}.
We recommend a reader, who wants a quick grasp of
the idea, to refer to~\Cref{sec: euclidean}. This should be sufficient for
understanding our main technique.
The extension to
Bregman case is technical in nature and noticing the reason for using a double loop algorithm in this case requires a good deal of understanding of proposed analysis.

For the most
general case with Bregman distances, a reader can skip~\Cref{sec: euclidean} without losing much and go directly to~\Cref{sec:br}.
We kept~\Cref{sec: euclidean} for a clearer exposition of the main ideas via a simpler algorithm and analysis.
We tried to make the sections self-contained and the proofs isolated: convergence
rate and convergence of iterates are separated.

Finally, one can read~\Cref{sec: extensions} right after~\Cref{sec: euclidean} to see how the same ideas give rise to variance reduced FBF and FoRB algorithms with similar guarantees and ability to solve monotone inclusions.
In this section, we also illustrate how to obtain linear rate of convergence with strong convexity.
\Cref{sec: apps} clarifies how to apply our developments to specific problems such as matrix games and linearly constrained optimization.
Most of the proofs are given with the corresponding results; remaining proofs are deferred to~\Cref{sec: appendix}.
\paragraph{Practical guide. } 
We give the parameters recommended in practice in~\Cref{rem: eg},~\Cref{rem: mp},~\Cref{rem: forb} for~\Cref{alg: eg1},~\Cref{alg: mp1},~\Cref{alg: forb}, respectively.
These parameters are optimized to obtain the best complexity in terms of dependence to problem dimensions (and not dependence to constants) and we use them in our numerical experiments in~\Cref{sec: numexp}.
For convenience we also specify the updates in the important case of
matrix games with entropic setup in~\Cref{sec: matrix_games}.
\begin{table}
\captionsetup{font=small}
\centering
\begin{tabular}{lll}\toprule
     & \textbf{Rate \& Complexity} & \textbf{Convergence of iterates} \\\midrule
\makecell[l]{\textbf{Euclidean setup}\\
EG: \Cref{sec: euclidean} \\
FBF, FoRB: \Cref{sec: extensions}
} & \makecell[l]{~\\\Cref{eq: eg1_th2}, \Cref{cor:eucl}\\ \Cref{cor: fbf},~\Cref{cor: forb}} & \makecell[l]{~\\\Cref{th: eg1_th1}\\ \Cref{th: fbf},~\Cref{th: forb}} \\[1mm]
\midrule
\makecell[l]{\textbf{Bregman setup}\\
MP: \Cref{sec:br}} & \makecell{~\\\Cref{eq: eg1_th_br}, \Cref{cor:compl_br}}
                                               & \qquad\qquad\quad--- \\[1mm]
\bottomrule
\end{tabular}
\caption{Structure of the paper}
\label{table: t2}
\end{table}

\section{Euclidean setup}\label{sec: euclidean}
To illustrate our technique, we pick extragradient method due to the simplicity of its analysis, its extension to Bregman distances and its wide use in the literature.
\subsection{Preliminaries}\label{sec: prelim}
Let $\mathcal{Z}$ be a finite dimensional vector space with Euclidean
inner product $\lr{\cdot,\cdot}$ and norm $\n{\cdot}$.
The notation $[N]$ represents the set $\{1, \dots, N\}$.
We say $F\colon \dom{g}\to \mathcal{Z}$ is monotone if for all $\bx, \by$, $\langle F(\bx) - F(\by), \bx - \by \rangle \geq 0$.
Proximal operator is defined as $\prox_g(\bx) = \argmin_\by
\left\{g(\by) + \frac{1}{2} \| \by - \bx \|^2\right\}$. For  a proper convex lower semicontinuous~(lsc) $g$, domain is defined as $\dom g = \{\bz\colon g(\bz) < +\infty\}$ and the following prox-inequality is standard
\begin{equation}\label{eq: prox_ineq}
\mathbf{\bar z} = \prox_{g}(\bz) \quad \iff\quad \langle \mathbf{\bar z} - \bz,
\bu - \mathbf{\bar z} \rangle \geq  g(\bzb) - g(\bu), \qquad \forall \bu\in\mathcal{Z}.
\end{equation}
We continue with our assumptions.
\begin{assumption}\label{asmp: asmp1}~
\begin{enumerate}[(i)]
\item The solution set $\mathsf{Sol}$ of~\eqref{eq: prob_vi} is
  nonempty.
  \item The function $g\colon\mathcal{Z}\to\mathbb{R}\cup\{+\infty\}$ is proper convex lower semicontinuous.
\item The operator $F$ is monotone.
\item The operator $F$ has a stochastic oracle $F_\xi$ that
  is unbiased $F(\bz)=\mathop{\mathbb{E}}\left[ F_\xi(\bz) \right]$ and $L$-Lipschitz in mean:
\begin{equation*}
{\mathbb{E}}\left[\| F_\xi(\bu) - F_\xi(\bv) \|^2\right] \leq L^2 \| \bu-\bv \|^2,~~~~~\forall \bu, \bv\in\mathcal{Z}.
\end{equation*}
\end{enumerate}
\end{assumption}
\paragraph{Finite sum.} Suppose $F$ has a finite sum representation
$F = \sum_{i=1}^N F_i$, where each $F_i$ is $L_i$-Lipschitz and the
full operator $F$ is $L_F$-Lipschitz. By triangle inequality it
follows, of course, that $L_F\leq \sum_{i=1}^N L_i$. On one hand,
$\sum_{i=1}^N L_i$ can be much larger than $L_F$. On the other, it
might be the case that $L_i$ are easy to compute, but not a true
$L_F$. Then the latter inequality gives us the most natural upper bound
on $L_F$. The two simplest stochastic oracles can be defined as follows
\begin{enumerate}
\item Uniform sampling: $ F_\xi(\bz) = NF_i(\bz),~ q_i=\Pr\{\xi = i\} = \frac 1 N$.   In this case, $L = \sqrt{N\sum_{i\in[N]} L_i^2}$.
\item Importance sampling: $F_\xi(\bz) = \frac{1}{q_i}F_i(\bz),\quad q_i=\Pr\{\xi = i\} =  \frac{L_i}{\sum_{j\in [N]}L_j}$.
  In this case, $L = \sum_{i\in [N]}L_i$.
\end{enumerate}

This example is useful in several regards. First, it is one of the
most general problems that proposed algorithms can tackle and for
concreteness it is useful to keep it as a reference point.  Second,
this problem even in its generality already indicates possible
pitfalls caused by non-optimal stochastic oracles. If  $L$ of our
stochastic oracle is much worse (meaning larger) than $L_F$, it may
eliminate all advantages of cheap stochastic oracles.
In the sequel, for finite-sum problems, we assume that $\xi \in [N]$, similar to the two oracles described above.

\subsection{Extragradient with variance reduction}
The classical stochastic variance reduced gradient
(SVRG)~\cite{johnson2013accelerating} uses a double loop structure
(looped): the full gradients are computed in the outer loop and the
cheap variance reduced gradients~\eqref{eq: vr_gk} are used in the
inner loop. Works~\cite{kovalev2020dont,hofmann2015variance} proposed
a \emph{loopless} variant of SVRG, where the outer loop was eliminated and
instead full gradients were computed \emph{once in a while} according to a
randomized rule. Both methods share similar guarantees, but the
latter variant is slightly simpler to analyze and implement.

We present the loopless version of extragradient with variance reduction in Alg.~\ref{alg:
  eg1}. Every iteration requires two stochastic oracles $F_\xi$ and
one $F$ with probability $p$.
Parameter $\a$ is the key in establishing a favorable
complexity. While convergence of $(\bz_k)$ to a solution will be proven for
any $\a\in [0,1)$, a good total complexity requires a specific
choice of $\a$.
Therefore, the specific form of $\bzb_k$ is important.
Later, we see that with $\a = 1-p$, Alg.~\ref{alg: eg1}
has the claimed complexity in~\Cref{table: t1}.
\begin{algorithm}[t]
\caption{Extragradient with variance reduction }
\begin{algorithmic}
    \STATE {\bfseries Input:} Set $p\in (0,1]$, probability
    distribution $Q$, step size
    $\tau$, $\a \in (0,1)$, $\mathbf{z}_0=\mathbf{w}_0$
    \vspace{.2cm}
    \FOR{$k = 0,1,\ldots $}
        \STATE $\mathbf{\z}_k = \a \mathbf{z}_k + (1-\a)\mathbf{w}_k$
        \STATE $\mathbf{z}_{k+1/2} = \prox_{\tau g}(\mathbf{\z}_k - \tau F(\mathbf{w}_k))$
        \STATE Draw an index $\xi_k$ according to $Q$
        \STATE $\mathbf{z}_{k+1} = \prox_{\tau g}(\mathbf{\z}_k - \tau[F(\mathbf{w}_k)+F_{\xi_k}(\mathbf{z}_{k+1/2}) - F_{\xi_k}(\mathbf{w}_k)])$
        \STATE $\mathbf{w}_{k+1} = \begin{cases}
          \mathbf{z}_{k+1}, \text{~~with probability~~} p\\
                \mathbf{w}_k, \text{~~~~with probability~~} 1-p
            \end{cases}$
        \ENDFOR
      \end{algorithmic}
\label{alg: eg1}
\end{algorithm}
It is interesting to note that by eliminating all randomness,
Alg.~\ref{alg: eg1} reduces to extragradient.
\begin{remark}\label{rem: eg}
For running Alg.~\ref{alg: eg1} in practice, we suggest $p=\frac{2}{N}$, $\a = 1 - p$, and $\t = \frac{0.99\sqrt p}{L}$. Specific problem may require
a more careful examination of ``optimal'' parameters (see App.~\ref{sec: bilinear}).
\end{remark}
\subsection{Analysis}\label{subs:eucl}
In Alg.~\ref{alg: eg1}, we have two sources of randomness at each iteration:
the index $\xi_k$ which is used for computing $\bz_{k+1}$ and the choice of $\bw_{k}$ (the snapshot point).
We use the following notation for the conditional expectations: $\mathbb{E}[\cdot | \sigma(\xi_0, \dots, \xi_{k-1},\bw_k)] = \mathbb{E}_k[\cdot]$ and $\mathbb{E}[\cdot | \sigma(\xi_0, \dots, \xi_{k},\bw_k)] = \mathbb{E}_{k+1/2}[\cdot]$.

For the iterates $(\bz_k)$, $(\bw_k)$ of Alg.~\ref{alg: eg1} and any $\bz\in
\dom g$,  we define
\begin{align*}
\Phi_{k}(\bz) &\coloneqq \a \| \bz_{k} - \bz \|^2 + \frac{1-\a}{p}\n{\bw_{k}-\bz}^2.
\end{align*}
We see in the following
lemma how $\Phi_{k}$ naturally arises in our analysis as the
Lyapunov function.
  \begin{lemma}\label[lemma]{lem: eg1}
Let~\Cref{asmp: asmp1} hold, $\alpha \in [0, 1)$, $p\in(0, 1]$, and
$\tau =  \frac{\sqrt{1-\a}}{L}\gamma$, for $\gamma \in (0, 1)$. Then for $(\bz_k)$ generated by Alg.~\ref{alg: eg1} and any $\bz_\ast\in\Sol$, it holds that
\begin{equation*}
\mathbb{E}_k \left[ \Phi_{k+1}(\bz_\ast) \right] \leq \Phi_k(\bz_\ast) - (1-\gamma) \Big((1-\alpha) \| \bz_{k+1/2} - \bw_{k} \|^2 + \mathbb{E}_k\| \bz_{k+1} - \bz_{k+1/2} \|^2\Big).
\end{equation*}
    Moreover, it holds that $\sum_{k=0}^\infty \Big((1-\a)\mathbb{E}
    \|\bz_{k+1/2} - \bw_k \|^2  + \mathbb{E}
    \|\bz_{k+1}-\bz_{k+1/2} \|^2  \Big) \leq \frac 1
{1-\gamma}\Phi_0(\bz_\ast)$.
\end{lemma}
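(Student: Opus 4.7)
The plan is to combine the two prox-inequalities arising from the projection steps of Algorithm~\ref{alg: eg1}, exploit a cancellation that occurs only after conditional expectation (this is where the variance-reduction structure pays off), control the residual stochastic cross term via Young's inequality plus $L$-Lipschitz in mean, and finally read off the Lyapunov drop using the convex-combination structure of $\bzb_k$ together with the snapshot coin flip defining $\bw_{k+1}$.

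First I would apply \eqref{eq: prox_ineq} twice: to $\zz = \prox_{\tau g}(\bzb_k - \tau F(\bw_k))$ with test point $\bz_{k+1}$, and to $\bz_{k+1} = \prox_{\tau g}\bigl(\bzb_k - \tau[F(\bw_k)+F_{\xi_k}(\zz)-F_{\xi_k}(\bw_k)]\bigr)$ with test point $\bz_\ast$. Summing via the three-point identity $2\lr{a-b,c-a}=\|b-c\|^2-\|a-b\|^2-\|a-c\|^2$ cancels the $\|\bzb_k-\bz_{k+1}\|^2$ and $g(\bz_{k+1})$ terms, and after rearranging the inner products the right-hand side reads $2\tau\lr{F(\bw_k)+F_{\xi_k}(\zz)-F_{\xi_k}(\bw_k),\,\zz-\bz_\ast} + 2\tau\lr{F_{\xi_k}(\zz)-F_{\xi_k}(\bw_k),\,\bz_{k+1}-\zz} + 2\tau(g(\zz)-g(\bz_\ast))$. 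Taking $\mathbb{E}_k$ and using that $\zz$ is measurable with respect to $\sigma(\xi_0,\dots,\xi_{k-1},\bw_k)$ converts the first inner product into $2\tau\lr{F(\zz),\zz-\bz_\ast}$; combined with $2\tau(g(\zz)-g(\bz_\ast))$, monotonicity of $F$ together with the VI at $\bz_\ast$ yields nonnegativity and this group is dropped. This is the variance-reduction cancellation: the stochastic noise at $\zz$ disappears in expectation precisely because $\zz$ was generated from information already fixed by the conditioning $\sigma$-algebra.

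The only remaining stochastic term is $2\tau\mathbb{E}_k\lr{F_{\xi_k}(\zz)-F_{\xi_k}(\bw_k),\bz_{k+1}-\zz}$, which I would lower-bound by Young's inequality with parameter $\lambda = 1/\gamma$ and then apply Assumption~\ref{asmp: asmp1}(iv) to get $-\tfrac{\tau^2 L^2}{\gamma}\|\zz-\bw_k\|^2 - \gamma\mathbb{E}_k\|\bz_{k+1}-\zz\|^2$. To surface the Lyapunov structure I would expand $\|\bzb_k-\bz_\ast\|^2$ and $\|\zz-\bzb_k\|^2$ using $\|c-\a a-(1-\a)b\|^2 = \a\|c-a\|^2 + (1-\a)\|c-b\|^2 - \a(1-\a)\|a-b\|^2$ applied to $\bzb_k = \a\bz_k+(1-\a)\bw_k$; the two $\a(1-\a)\|\bz_k-\bw_k\|^2$ contributions cancel exactly. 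Dropping the nonnegative $\a\|\zz-\bz_k\|^2$ term and substituting $\tau^2 L^2/\gamma = \gamma(1-\a)$ (which is precisely the definition $\tau = \gamma\sqrt{1-\a}/L$) collapses the inequality to
\begin{equation*}
\a\|\bz_k-\bz_\ast\|^2 + (1-\a)\|\bw_k-\bz_\ast\|^2 - \mathbb{E}_k\|\bz_{k+1}-\bz_\ast\|^2 \geq (1-\gamma)\bigl((1-\a)\|\zz-\bw_k\|^2 + \mathbb{E}_k\|\bz_{k+1}-\zz\|^2\bigr).
\end{equation*}

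The last step is to recognize that the left-hand side above is exactly $\Phi_k(\bz_\ast) - \mathbb{E}_k\Phi_{k+1}(\bz_\ast)$: the snapshot rule gives $\mathbb{E}_k\|\bw_{k+1}-\bz_\ast\|^2 = p\,\mathbb{E}_k\|\bz_{k+1}-\bz_\ast\|^2 + (1-p)\|\bw_k-\bz_\ast\|^2$, so $\mathbb{E}_k\Phi_{k+1}(\bz_\ast) = \mathbb{E}_k\|\bz_{k+1}-\bz_\ast\|^2 + \tfrac{(1-\a)(1-p)}{p}\|\bw_k-\bz_\ast\|^2$ and subtracting from $\Phi_k(\bz_\ast)$ reproduces the LHS verbatim — this is the first claim. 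The ``moreover'' statement then follows by taking total expectation, telescoping from $k=0$ to $K$, using $\mathbb{E}\Phi_{K+1}(\bz_\ast)\geq 0$, and sending $K\to\infty$. The main obstacle is aligning three moving parts — the Young's parameter, the $(1-\a)$ coefficient gained from expanding $\|\zz-\bzb_k\|^2$, and the step-size scaling — which together force the choice $\tau = \gamma\sqrt{1-\a}/L$ and also explain why $\bzb_k$ must be a nontrivial convex combination: the $(1-\a)$ ``budget'' is exactly what pays for the Young's-induced variance error.
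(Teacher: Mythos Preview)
Your proposal is correct and follows essentially the same route as the paper: the two prox-inequalities, the conditional-expectation cancellation via unbiasedness and monotonicity, Young's inequality on the residual cross term with the Lipschitz-in-mean bound, and the snapshot identity \eqref{eq: eg1_wexp} to close the Lyapunov recursion. The only cosmetic difference is that the paper expands $\langle \bz_{k+1}-\bzb_k,\cdot\rangle$ and $\langle \zz-\bzb_k,\cdot\rangle$ by first splitting $\bzb_k=\a\bz_k+(1-\a)\bw_k$ inside each inner product (equations \eqref{eq: eg1_ip1}--\eqref{eq: eg1_ip2}), whereas you first apply the three-point identity with $\bzb_k$ intact and then invoke the convex-combination variance formula; the resulting terms are identical.
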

\begin{proof}
A reader may find it simpler to follow the analysis by assuming that $g$ is the indicator function of some convex set. Then since all iterates are feasible, we would have $g(\bz_k) = 0$.

Let us denote $\Fhat(\bz_{k+1/2}) = F(\bw_k) + F_{\xi_k}(\bz_{k+1/2}) - F_{\xi_k}(\bw_k)$.
By prox-inequality~\eqref{eq: prox_ineq} applied to the definitions of $\bz_{k+1}$ and $\bz_{k+1/2}$, we have that for all $\bz$,
\begin{align}
\begin{split}\label{eq: prox_ineqs}
\langle \bz_{k+1} - \bzb_k + \tau \Fhat(\bz_{k+1/2}), \bz - \bz_{k+1} \rangle &\geq \tau g(\bz_{k+1}) -\tau g(\bz), \\
\langle \bz_{k+1/2} - \bzb_k + \tau F(\bw_k), \bz_{k+1} - \bz_{k+1/2} \rangle &\geq \tau g(\bz_{k+1/2}) - \tau g(\bz_{k+1}).
\end{split}
\end{align}
We sum two inequalities, use the definition of $\Fhat(\bz_{k+1/2})$, and arrange to get
\begin{align}\label{eq: eg1_eq1}
\langle \bz_{k+1} - \bzb_k, \bz - \bz_{k+1} \rangle &+  \langle
\bz_{k+1/2} -  \bzb_k, \bz_{k+1} - \bz_{k+1/2} \rangle \notag \\
&+\tau \langle
F_{\xi_k}(\bw_k) - F_{\xi_k}(\bz_{k+1/2}), \bz_{k+1}-\bz_{k+1/2}
\rangle \notag \\ &
+\tau \langle \Fhat(\bz_{k+1/2}), \bz - \bz_{k+1/2} \rangle  \geq \tau[g(\bz_{k+1/2}) - g(\bz)].
\end{align}
For the first inner product we use definition of $\bzb_k$ and identity $2\langle \mathbf{a},\mathbf{b} \rangle = \|\mathbf{a}+\mathbf{b}\|^2 - \|\mathbf{a}\|^2 - \|\mathbf{b}\|^2$
\begin{align}
2&\langle \bz_{k+1} - \bzb_k, \bz-\bz_{k+1} \rangle = 2\alpha\langle \bz_{k+1} - \bz_k, \bz-\bz_{k+1} \rangle + 2(1-\alpha)\langle \bz_{k+1} - \bw_k, \bz-\bz_{k+1} \rangle \notag\\
&=\alpha\left( \| \bz_k - \bz\|^2 - \|\bz_{k+1} - \bz \|^2 - \|\bz_{k+1} - \bz_k\|^2\right) \notag \\
&\qquad + (1-\alpha)\big( \| \bw_k - \bz\|^2 -  \| \bz_{k+1} - \bz \|^2 - \| \bz_{k+1} - \bw_k\|^2 \big) \notag \\
&=\alpha \| \bz_k - \bz\|^2 - \| \bz_{k+1} - \bz \|^2 + (1-\alpha) \| \bw_k - \bz\|^2 - \alpha \| \bz_{k+1} - \bz_k \|^2 - (1-\alpha) \| \bz_{k+1}-\bw_k \|^2.\label{eq: eg1_ip1}
\end{align}
Similarly, for the second inner product in~\eqref{eq: eg1_eq1} we deduce
\begin{multline}
2\langle \bz_{k+1/2} - \bzb_k, \bz_{k+1} - \bz_{k+1/2} \rangle = \alpha \| \bz_{k+1} - \bz_{k} \|^2 - \| \bz_{k+1} - \bz_{k+1/2} \|^2 + (1-\alpha) \| \bz_{k+1} - \bw_k\|^2 \\
-\alpha \| \bz_{k+1/2} - \bz_k \|^2 - (1-\alpha) \| \bz_{k+1/2} - \bw_k \|^2.\label{eq: eg1_ip2}
\end{multline}
For the remaining terms in~\eqref{eq: eg1_eq1}, we plug in $\bz=\bz_\ast$, use that $\bz_{k+1/2}, \bw_k$ is deterministic under the conditioning of $\mathbb{E}_k$ and $\mathbb{E}_k[\Fhat(\bz_{k+1/2})]=\mathbb{E}_k\left[ F(\bw_k) + F_{\xi_k}(\bz_{k+1/2}) - F_{\xi_k}(\bw_k)\right] = F(\bz_{k+1/2})$ to obtain
  \begin{align}
  \mathbb{E}_k &\left[ \langle \Fhat(\bz_{k+1/2}), \bz_\ast-\bz_{k+1/2} \rangle +g(\bz_\ast) -
    g(\bz_{k+1/2})\right] \notag \\
  &=  \langle  F(\bz_{k+1/2}), \bz_\ast-\bz_{k+1/2} \rangle +g(\bz_\ast) -
  g(\bz_{k+1/2}) \notag \qquad \qquad \,\left(\E_k[F(\bw_k)-F_{\xi_k}(\bw_k)]=0\right)\\
&\leq \langle  F(\bz_\ast), \bz_\ast-\bz_{k+1/2} \rangle +g(\bz_\ast) -
g(\bz_{k+1/2}) \leq 0 \qquad\qquad\,\left(\text{monotonicity and \eqref{eq: prob_vi}}\right)
    \label{eq: eg1_ip2.5}
  \end{align}
and
\begin{align}
&    \mathbb{E}_k\left[2\tau \langle F_{\xi_k}(\bw_k) -
      F_{\xi_k}(\bz_{k+1/2}) , \bz_{k+1} - \bz_{k+1/2} \rangle\right] \notag\\
    \leq\, &\Ex{ 2\tau \n{F_{\xi_k}(\bw_k) - F_{\xi_k}(\bz_{k+1/2})}
      \n{\bz_{k+1} - \bz_{k+1/2} }}{k}    \qquad\qquad\qquad
    ~(\text{Cauchy-Schwarz})\notag \\
         \leq\, &\frac{\tau^2}{\gamma} \E_k\left[\|F_{\xi_k}(\bz_{k+1/2}) - F_{\xi_k}(\bw_k )\|^2\right] + \gamma
    \mathbb{E}_k\left[\| \bz_{k+1} - \bz_{k+1/2}\|^2\right]   \qquad
    ~(\text{Young's ineq.})\notag \\
    \leq\, &(1-\a)\gamma \|\bz_{k+1/2} - \bw_k \|^2 + \gamma
    \mathbb{E}_k\left[\| \bz_{k+1} - \bz_{k+1/2}\|^2\right]. \qquad
    \text{(\Cref{asmp: asmp1}(iv))} \label{eq:
      eg1_ip3}
\end{align}
We use~\eqref{eq: eg1_ip1},~\eqref{eq: eg1_ip2},~\eqref{eq: eg1_ip2.5}, and~\eqref{eq: eg1_ip3} in~\eqref{eq: eg1_eq1}, after taking expectation $\mathbb{E}_k$ and letting $\bz=\bz_\ast$, to deduce
\begin{align}
\mathbb{E}_k\left[\| \bz_{k+1} -\bz_\ast \|^2 \right] \leq \alpha
\|\bz_k-\bz_\ast\|^2 + (1-\alpha)\|\bw_k &-\bz_\ast\|^2 -\left(1-\alpha
\right)(1-\gamma) \|\bz_{k+1/2} -\bw_k\|^2 \notag \\ &
- \left( 1-\gamma \right) \mathbb{E}_k\left[\| \bz_{k+1} - \bz_{k+1/2} \|^2\right].\label{eq: eg1_main1}
\end{align}
By the definition of $\bw_{k+1}$ and $\mathbb{E}_{k+1/2}$, it follows that
\begin{equation}\label{eq: eg1_wexp}
\frac{1-\alpha}{p}\mathbb{E}_{k+1/2}\left[\| \bw_{k+1} -\bz_\ast \|^2\right] = (1-\alpha)\| \bz_{k+1}-\bz_\ast \|^2 + (1-\alpha)\left(\frac{1}{p}-1\right)  \| \bw_k-\bz_\ast \|^2.
\end{equation}
We add~\eqref{eq: eg1_wexp} to~\eqref{eq: eg1_main1} and apply the tower property $\mathbb{E}_k\left[ \mathbb{E}_{k+1/2}[\cdot] \right] =\mathbb{E}_k[\cdot]$ to deduce
\begin{multline*}
\alpha\mathbb{E}_k\left[ \| \bz_{k+1} - \bz_\ast \|^2 \right] + \frac{1-\alpha}{p} \mathbb{E}_k \left[ \| \bw_{k+1} - \bz_\ast \|^2 \right] \leq \alpha \| \bz_k - \bz_\ast \|^2 + \frac{1-\alpha}{p} \| \bw_k - \bz_\ast \|^2 \\
- (1-\gamma)\Big( (1-\alpha)\|\bz_{k+1/2} - \bw_k\|^2 + \mathbb{E}_k\left[\| \bz_{k+1} - \bz_{k+1/2}\|^2 \right]\Big).
\end{multline*}
Using the definition of $\Phi_k(\bz)$, we obtain the first
result. Applying total expectation and summing the inequality
yields the second result.
\end{proof}

To
show the almost sure convergence of the sequence $(\bz_k)$, we need $F_\xi$ to be continuous for all $\xi$. For a
finite sum example it follows automatically from \Cref{asmp: asmp1}.
The proof is given in~\Cref{sec: appendix}.
\begin{theorem}\label{th: eg1_th1}
Let~\Cref{asmp: asmp1} hold, $F_\xi$ be continuous for all $\xi$, $\alpha\in[0,1)$, $p\in(0,1]$,
 and $\tau =  \frac{\sqrt{1-\a}}{L}\gamma$, for $\gamma \in (0, 1)$. Then, almost surely there exists $\bz_\ast \in \mathsf{Sol}$ such that $(\bz_k)$ generated by Alg.~\ref{alg: eg1} converges to $\bz_\ast$.
\end{theorem}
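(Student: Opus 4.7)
The plan is to apply the Robbins--Siegmund almost-supermartingale theorem to the recursion in \Cref{lem: eg1}, combine it with a Fej\'er--Opial argument, and use continuity of $F$ together with lower semicontinuity of $g$ to identify the limit as a point in $\Sol$. Fix any $\bz_\ast \in \Sol$. Because \Cref{lem: eg1} presents $\Phi_k(\bz_\ast)$ as a nonnegative almost-supermartingale, Robbins--Siegmund gives almost sure convergence of $\Phi_k(\bz_\ast)$ together with the summability $\sum_k\bigl[(1-\alpha)\|\bz_{k+1/2}-\bw_k\|^2 + \mathbb{E}_k\|\bz_{k+1}-\bz_{k+1/2}\|^2\bigr]<\infty$ a.s. A conditional Borel--Cantelli step upgrades the second series to the unconditional $\sum_k\|\bz_{k+1}-\bz_{k+1/2}\|^2<\infty$ a.s. Hence $\|\bz_{k+1/2}-\bw_k\|\to 0$, $\|\bz_{k+1}-\bz_{k+1/2}\|\to 0$, and $\|\bz_{k+1}-\bw_k\|\to 0$ almost surely. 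The update $\bw_{k+1}\in\{\bz_{k+1},\bw_k\}$ forces $\|\bw_{k+1}-\bz_{k+1}\|\leq\|\bw_k-\bz_{k+1}\|$ pathwise, so a one-step shift yields the crucial asymptotic alignment $\|\bw_k-\bz_k\|\to 0$ a.s.

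Convergence of $\Phi_k(\bz_\ast)$ implies $(\bw_k)$ is bounded a.s., and the alignment propagates this to $(\bz_k)$ and $(\bz_{k+1/2})$. Along any convergent subsequence $\bw_{k_j}\to\bar{\bw}$, the above limits give $\bz_{k_j}\to\bar{\bw}$, $\bz_{k_j+1/2}\to\bar{\bw}$, and $\bzb_{k_j}\to\bar{\bw}$. Passing to the limit in the prox inequality that defines $\bz_{k+1/2}$,
\begin{equation*}
\langle \bz_{k+1/2}-\bzb_k+\tau F(\bw_k),\ \bu-\bz_{k+1/2}\rangle\geq \tau\bigl(g(\bz_{k+1/2})-g(\bu)\bigr),\quad\forall\bu,
\end{equation*}
and using continuity of $F$ (inherited from \Cref{asmp: asmp1}(iv) via $\|F(\bu)-F(\bv)\|^2\leq L^2\|\bu-\bv\|^2$) together with lower semicontinuity of $g$, yields $\langle F(\bar{\bw}),\bu-\bar{\bw}\rangle + g(\bu)-g(\bar{\bw})\geq 0$ for every $\bu$, i.e.\ $\bar{\bw}\in\Sol$.

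To isolate a single limit I would take a countable dense subset of the closed convex set $\Sol$ and intersect, over that subset, the full-measure events on which $\Phi_k(\bz_\ast)$ converges. A standard density/continuity argument lifts convergence of $\|\bw_k-\bz_\ast\|$ from the dense subset to all of $\Sol$, and an Opial-type lemma---two distinct cluster points of $\bw_k$ lying in $\Sol$ cannot both be compatible with convergence of the distance to each---forces $\bw_k$ to have at most one cluster point in $\Sol$. Combined with the previous paragraph, $\bw_k$ converges a.s.\ to some $\bz_\ast\in\Sol$, and $\|\bz_k-\bw_k\|\to 0$ gives $\bz_k\to\bz_\ast$ as well.

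The main obstacle I expect is the identification of cluster points with solutions of the VI: passing to the limit in the prox inequality requires $\bzb_{k_j}$ and $\bz_{k_j+1/2}$ to share a common limit, which in turn hinges on the a.s.\ alignment $\|\bz_k-\bw_k\|\to 0$. Establishing this alignment is the delicate step of the argument; it combines three a.s.\ summable quantities produced by Robbins--Siegmund with the pathwise contractivity $\|\bw_{k+1}-\bz_{k+1}\|\leq\|\bw_k-\bz_{k+1}\|$ coming from the Bernoulli update of $\bw_k$.
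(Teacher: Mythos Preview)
Your argument is correct and follows the same overall Robbins--Siegmund\,/\,stochastic Fej\'er template as the paper, but the details differ in three places, and in two of them your choices are actually cleaner than the paper's.

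\emph{Alignment $\|\bz_k-\bw_k\|\to 0$.} The paper goes back inside the proof of \Cref{lem: eg1} and retains the extra term $-\alpha\|\bz_{k+1/2}-\bz_k\|^2$ to conclude $\bz_{k+1/2}-\bz_k\to 0$ directly from Robbins--Siegmund. You instead exploit the pathwise inequality $\|\bw_{k+1}-\bz_{k+1}\|\leq\|\bw_k-\bz_{k+1}\|$ coming from $\bw_{k+1}\in\{\bz_{k+1},\bw_k\}$; this is equally valid and has the minor advantage of not degenerating when $\alpha=0$.

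\emph{Identifying cluster points as solutions.} The paper passes to the limit in the prox inequality for $\bz_{k+1}$, which contains $F_{\xi_k}(\bz_{k+1/2})-F_{\xi_k}(\bw_k)$ and therefore needs the hypothesis that each $F_\xi$ is continuous. You pass to the limit in the prox inequality for $\bz_{k+1/2}$, which involves only $F(\bw_k)$; continuity of $F$ (immediate from Jensen and the $L$-Lipschitz-in-mean assumption) is enough. Your route is simpler and shows that the explicit continuity assumption on $F_\xi$ in the theorem statement is not actually needed.

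\emph{Single full-measure set for all $\bz_\ast\in\Sol$.} The paper cites \cite[Proposition~2.3]{combettes2015stochastic} for this globalization step; your countable-dense-subset-plus-Opial argument is exactly what that proposition packages, so the two are equivalent. One small point: to run Opial you need convergence of $\|\bw_k-\bz_\ast\|$ (not just $\Phi_k(\bz_\ast)$) for every $\bz_\ast\in\Sol$; this follows from $\|\bz_k-\bw_k\|\to 0$ and boundedness, so your argument goes through, but it is worth stating explicitly. For the upgrade $\sum_k\E_k\|\bz_{k+1}-\bz_{k+1/2}\|^2<\infty\Rightarrow\sum_k\|\bz_{k+1}-\bz_{k+1/2}\|^2<\infty$ a.s., the paper uses the Fubini--Tonelli route from the second statement of \Cref{lem: eg1}, which is shorter than invoking a conditional Borel--Cantelli lemma; either works.
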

\subsubsection{Convergence rate and complexity for monotone case}\label{sec: euc_sub_rate}
In the general monotone case, the convergence measure is the gap
function given by
\begin{equation*}
  \mathrm{Gap}(\bw) = \max_{\bz\in\mathcal{C}}\bigl\{ \langle F(\bz), \bw - \bz \rangle + g(\bw) - g(\bz)\bigr\},
\end{equation*}
where $\mathcal{C}$ is a compact subset of $\cZ$ that we use to handle
the possibility of unboundedness of $\dom g$ (see~\cite[Lemma 1]{nesterov2007dual}).
Since we work in probabilistic setting,
naturally our convergence measure will be based on $\E[\mathrm{Gap}(\bw)]$.
We start with a simple lemma for ``switching'' the order of maximum
and expectation, which is required for showing convergence of expected
gap. This technique is standard for such
purpose~\cite{nemirovski2009robust} and the proof is given in~\Cref{sec: appendix}.
\begin{lemma}\label[lemma]{lem: exp_max_lem}
Let $\mathcal{F}=(\mathcal{F}_k)_{k\geq 0}$ be a filtration and $(\bu_k)$ a stochastic process adapted to $\mathcal{F}$ with $\mathbb{E}[\bu_{k+1} | \mathcal{F}_k] = 0$. Then for any $K \in \mathbb{N}$, $ \bx_0\in \cZ$, and any compact set $\mathcal{C}\subset\mathcal{Z}$,
\begin{equation*}
\mathbb{E}\left[\max_{\bx\in\mathcal{C}} \sum_{k=0}^{K-1} \langle \bu_{k+1}, \bx \rangle\right] \leq \max_{\bx\in\mathcal{C}}\frac{1}{2}\|  \bx_0 - \bx\|^2 +\frac{1}{2} \sum_{k=0}^{K-1} \mathbb{E}\| \bu_{k+1} \|^2.
\end{equation*}
\end{lemma}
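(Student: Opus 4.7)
The plan is to use a ghost iterate (or auxiliary/proxy sequence) argument in the spirit of Nemirovski--Juditsky--Lan--Shapiro. The inner product $\langle \bu_{k+1}, \bx\rangle$ couples the random increments with the maximization variable $\bx$, and the goal is to decouple them by introducing a sequence $(\bv_k)$ that ``absorbs'' the increments $\bu_{k+1}$ into a telescoping distance term, leaving behind only a martingale sum.

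Concretely, I would define an auxiliary sequence $\bv_0 = \bx_0$ and $\bv_{k+1} = \bv_k + \bu_{k+1}$. The three-point identity gives
\begin{equation*}
\|\bx - \bv_{k+1}\|^2 = \|\bx - \bv_k\|^2 - 2\langle \bu_{k+1}, \bx - \bv_k\rangle + \|\bu_{k+1}\|^2,
\end{equation*}
which rearranges to
\begin{equation*}
\langle \bu_{k+1}, \bx\rangle = \langle \bu_{k+1}, \bv_k\rangle + \tfrac{1}{2}\bigl(\|\bx-\bv_k\|^2 - \|\bx-\bv_{k+1}\|^2\bigr) + \tfrac{1}{2}\|\bu_{k+1}\|^2.
\end{equation*}

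Next I would sum from $k=0$ to $K-1$. The distance terms telescope, producing $\tfrac{1}{2}\|\bx - \bx_0\|^2 - \tfrac{1}{2}\|\bx - \bv_K\|^2$; dropping the nonpositive last term and taking $\max_{\bx\in\cC}$ on both sides (noting that the martingale-sum term $\sum_k \langle \bu_{k+1}, \bv_k\rangle$ does not depend on $\bx$) yields
\begin{equation*}
\max_{\bx\in\cC}\sum_{k=0}^{K-1}\langle \bu_{k+1}, \bx\rangle \leq \sum_{k=0}^{K-1}\langle \bu_{k+1}, \bv_k\rangle + \max_{\bx\in\cC}\tfrac{1}{2}\|\bx-\bx_0\|^2 + \tfrac{1}{2}\sum_{k=0}^{K-1}\|\bu_{k+1}\|^2.
\end{equation*}

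Finally, I would take total expectation. Since $\bv_k$ is a measurable function of $\bu_1,\dots,\bu_k$, it is $\mathcal{F}_k$-measurable, and the assumption $\mathbb{E}[\bu_{k+1}\mid\mathcal{F}_k]=0$ kills each term in the martingale sum by the tower property: $\mathbb{E}\langle \bu_{k+1}, \bv_k\rangle = \mathbb{E}[\langle \mathbb{E}[\bu_{k+1}\mid\mathcal{F}_k], \bv_k\rangle] = 0$. The remaining terms give exactly the stated bound. There is no real obstacle here---the one thing to be careful about is to define $\bv_k$ so that it is adapted to $\mathcal{F}_k$ (and not to $\mathcal{F}_{k+1}$), which is why the unconstrained recursion $\bv_{k+1} = \bv_k + \bu_{k+1}$ is preferable to a projected version depending on $\cC$.
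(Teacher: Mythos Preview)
Your proposal is correct and essentially identical to the paper's proof: the paper defines the same auxiliary sequence $\bx_{k+1}=\bx_k+\bu_{k+1}$ (your $\bv_k$), expands $\|\bx_{k+1}-\bx\|^2$, sums, takes max and expectation, and kills the martingale sum via the tower property. Your choice of a separate symbol $\bv_k$ for the ghost iterate is arguably cleaner than the paper's overloading of $\bx_k$, but the argument is the same.
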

We now continue with the main result of this section.
\begin{theorem}\label{eq: eg1_th2}
  Let~\Cref{asmp: asmp1} hold,  $p\in(0, 1]$, $\a = 1-p$, and
$\tau =  \frac{\sqrt{1-\a}}{L}\gamma$, for $\gamma \in (0, 1)$. Then,
  for $\bz^{K} = \frac{1}{K}\sum_{k=0}^{K-1} \bz_{k+1/2}$, it follows that
\begin{equation*}
  \mathbb{E}\left[ \mathrm{Gap}(\bz^{K}) \right] = \cO \left(
    \frac{L}{\sqrt p K} \right).
\end{equation*}
In particular, for $\t = \frac{\sqrt p}{2L}$, the rate is
\(  \mathbb{E}\left[ \mathrm{Gap}(\bz^{K}) \right] \leq
  \frac{17.5L}{\sqrt p K}  \max_{z\in\mathcal{C}}\n{\bz_0-\bz}^2
\).
\end{theorem}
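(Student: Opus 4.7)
The plan is to adapt the one-step analysis of~\Cref{lem: eg1} by running it with a generic test point $\bz\in\mathcal C$ in place of a solution $\bz_\ast\in\Sol$. Two modifications are needed. First, we can no longer invoke~\eqref{eq: prob_vi} to discard $\langle F(\bz_{k+1/2}),\bz_{k+1/2}-\bz\rangle + g(\bz_{k+1/2})-g(\bz)$; instead we keep this quantity and use monotonicity in the opposite direction, $\langle F(\bz_{k+1/2}),\bz_{k+1/2}-\bz\rangle\ge \langle F(\bz),\bz_{k+1/2}-\bz\rangle$, so that $F(\bz)$ appears in the tracked term. Second, because we will ultimately take $\max_\bz$ before $\mathbb{E}$, we split the stochastic estimator as $\Fhat(\bz_{k+1/2})=F(\bz_{k+1/2})+\Delta_k$ with $\mathbb{E}_k[\Delta_k]=0$, which exposes a linear-in-$\bz$ noise $\langle \Delta_k,\bz\rangle$ to be controlled via~\Cref{lem: exp_max_lem}.

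Running through the algebra that produced~\Cref{lem: eg1} verbatim, but starting from~\eqref{eq: eg1_eq1} with generic $\bz$ and making these two substitutions, yields the per-iteration inequality
\[
2\tau\bigl[g(\bz_{k+1/2})-g(\bz)+\langle F(\bz),\bz_{k+1/2}-\bz\rangle\bigr] + 2\tau\langle \Delta_k,\bz_{k+1/2}-\bz\rangle \le \Phi_k(\bz)-\mathbb{E}_k \Phi_{k+1}(\bz) - \mathcal R_k,
\]
for each fixed $\bz$, with $\mathcal R_k\ge 0$ (the Lipschitz-in-mean cross term is absorbed by Young's inequality and~\Cref{asmp: asmp1}(iv), exactly as in~\Cref{lem: eg1}). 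Summing over $k=0,\dots,K-1$, using $\Phi_K(\bz)\ge 0$, and decomposing $\langle \Delta_k,\bz_{k+1/2}-\bz\rangle=\langle \Delta_k,\bz_{k+1/2}\rangle-\langle \Delta_k,\bz\rangle$, I then take $\max_{\bz\in\mathcal C}$, noting that $\max_\bz \Phi_0(\bz)\le (2-p)\max_\bz\|\bz_0-\bz\|^2$ since $\bz_0=\bw_0$.

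Taking $\mathbb{E}$, the mean of $\sum_k\langle \Delta_k,\bz_{k+1/2}\rangle$ vanishes because $\bz_{k+1/2}$ is $\mathcal F_k$-measurable and $\mathbb{E}_k[\Delta_k]=0$. The remaining piece $\mathbb{E}\max_\bz\sum_k\langle 2\tau\Delta_k,\bz\rangle$ is bounded using~\Cref{lem: exp_max_lem} with $\bu_{k+1}=2\tau\Delta_k$, yielding $\tfrac12\max_\bz\|\bz_0-\bz\|^2 + 2\tau^2\sum_k\mathbb{E}\|\Delta_k\|^2$. The standard variance-reduction estimate $\mathbb{E}\|\Delta_k\|^2\le L^2\,\mathbb{E}\|\bz_{k+1/2}-\bw_k\|^2$ follows from~\Cref{asmp: asmp1}(iv), and the second conclusion of~\Cref{lem: eg1} then controls $\sum_k\mathbb{E}\|\bz_{k+1/2}-\bw_k\|^2$ by a constant multiple of $\Phi_0(\bz_\ast)$, which in turn is majorised by $(2-p)\max_\bz\|\bz_0-\bz\|^2$.

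To close, convexity of $g$ together with linearity of $\bz_{k+1/2}\mapsto \langle F(\bz),\bz_{k+1/2}-\bz\rangle$ yields $\Gap(\bz^K)\le \frac{1}{K}\max_\bz\sum_k[g(\bz_{k+1/2})-g(\bz)+\langle F(\bz),\bz_{k+1/2}-\bz\rangle]$. Dividing by $2\tau K$ and substituting $1-\alpha=p$, $\tau=\sqrt p\,\gamma/L$ produces the announced $\cO(L/(\sqrt p K))$ rate; specialising to $\gamma=1/2$ (so $\tau=\sqrt p/(2L)$) and collecting the explicit constants gives the $17.5\,L/(\sqrt p K)$ leading factor. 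The main obstacle I anticipate is the bookkeeping around $\mathbb{E}_k \Phi_{k+1}(\bz)$ when we want to maximise over $\bz$: the fluctuation $\Phi_{k+1}(\bz)-\mathbb{E}_k \Phi_{k+1}(\bz)$ happens to be \emph{affine} rather than quadratic in $\bz$ (the $\|\bz\|^2$ contributions cancel in the snapshot update), so it is also amenable to a secondary application of~\Cref{lem: exp_max_lem}; aggregating this secondary martingale with the primary $\Delta_k$ term is where the bulk of the constant-tracking effort lies.
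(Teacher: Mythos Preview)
Your proposal is essentially the paper's own argument. The paper too keeps the gap quantity $\Theta_{k+1/2}(\bz)=\langle F(\bz_{k+1/2}),\bz_{k+1/2}-\bz\rangle+g(\bz_{k+1/2})-g(\bz)$ (using monotonicity to pass to $\langle F(\bz),\cdot\rangle$ only at the very end), isolates the noise $\Delta_k=F(\bz_{k+1/2})-\Fhat(\bz_{k+1/2})$ as a first martingale $e_1(\bz,k)$, and then---exactly as you anticipate in your final paragraph---introduces a second martingale $e_2(\bz,k)$ capturing the snapshot fluctuation $\n{\bw_{k+1}-\bz}^2-\E_{k+1/2}\n{\bw_{k+1}-\bz}^2$, which is affine in $\bz$; both are handled by separate invocations of \Cref{lem: exp_max_lem}, and the resulting variance sums are closed via the second conclusion of \Cref{lem: eg1}.

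One presentational caution: your displayed per-iteration inequality mixes a pathwise left-hand side (it still contains $\Delta_k$) with $\E_k\Phi_{k+1}(\bz)$ on the right, and asserts $\mathcal R_k\ge 0$ pathwise, whereas the cross term $2\tau\langle F_{\xi_k}(\bw_k)-F_{\xi_k}(\bz_{k+1/2}),\bz_{k+1}-\bz_{k+1/2}\rangle$ is only dominated by the negative squares \emph{after} taking $\E_k$. The paper avoids this by writing a fully pathwise inequality (its~\eqref{eq:rate_first}), then adding $\n{\bw_{k+1}-\bz}^2-\n{\bw_k-\bz}^2$ to form $\Phi_{k+1}(\bz)$ pathwise (producing $e_2$), summing, taking $\max_\bz$, and only then taking total expectation and invoking the tower property plus~\eqref{eq: eg1_ip3} to drop the cross term. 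Since your last paragraph already identifies this ``bookkeeping'' as the crux and proposes the correct fix, this is a matter of cleaning up the write-up rather than a gap in the argument.
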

Recall that we measure complexity in terms of calls to the stochastic oracle $F_\xi(\cdot)$ and we assumed that the cost of computing $F(\cdot)$ is $N$ times  that of $F_\xi(\cdot)$. For a finite sum
example, this is a natural assumption.
\begin{remark}\label[remark]{rem: complexity_remark}
  For Alg.~\ref{alg: eg1}, since per iteration cost is $pN+2$ calls to $F_\xi$
  \textit{in expectation}, the result is ``average'' total complexity:
  \textit{expected number of calls to get a small expected gap}.
\end{remark}
\begin{corollary}\label[corollary]{cor:eucl}
  In the setting of~\Cref{eq: eg1_th2}, the average
  total complexity
   of Alg.~\ref{alg: eg1} to reach $\e$-accuracy is $\cO \left( N+ (pN+2) \left(1+\frac{L}{\sqrt p \e}\right) \right)$.
  In particular, for $p = \frac 2 N$ it is $\cO \left( N + \frac{\sqrt N L}{\e} \right)$.
\end{corollary}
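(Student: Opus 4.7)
The plan is to combine the iteration-complexity bound from~\Cref{eq: eg1_th2} with a careful per-iteration accounting of the expected number of calls to $F_\xi$ made by Alg.~\ref{alg: eg1}, then optimize over $p$.

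First I would use~\Cref{eq: eg1_th2}: for $\t = \frac{\sqrt p}{2L}$ and $\alpha = 1-p$, we have $\mathbb{E}[\mathrm{Gap}(\bz^K)] \leq \frac{17.5 L}{\sqrt p K}\max_{\bz\in\mathcal C}\n{\bz_0-\bz}^2$, so an $\varepsilon$-accurate solution is obtained after $K = \cO\bigl(\tfrac{L}{\sqrt p\,\e}\bigr)$ outer iterations. Next I would count the per-iteration oracle cost. At iteration $k$, the quantity $F(\bw_k)$ is used; since $\bw_k$ changes only when $\bw_{k+1} = \bz_{k+1}$ (which happens with probability $p$), we only need to recompute $F(\bw_{k+1})$ when this event occurs, so the amortized cost of maintaining $F(\bw_k)$ is $pN$ calls to $F_\xi$ per iteration. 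In addition, drawing $\xi_k$ and computing the two values $F_{\xi_k}(\bz_{k+1/2})$ and $F_{\xi_k}(\bw_k)$ costs $2$ calls to $F_\xi$ per iteration (and $F_{\xi_k}(\bw_k)$ could even be cached, but the constant is absorbed in the $\cO$). Hence the expected cost per iteration is $pN+2$. On top of this, the initial evaluation of $F(\bw_0)$ requires $N$ calls.

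Combining these, the total expected number of calls to $F_\xi$ is
\begin{equation*}
N + K(pN+2) \;=\; \cO\!\left( N + (pN+2)\left(1+\frac{L}{\sqrt p\,\e}\right)\right),
\end{equation*}
where the ``$1+$'' inside the parentheses absorbs the degenerate case when $K$ is already $\cO(1)$. This establishes the first claim. For the specialization $p = 2/N$ we have $pN+2 = 4 = \cO(1)$ and $\sqrt p = \sqrt{2/N}$, so
\begin{equation*}
\cO\!\left(N + (pN+2)\left(1+\frac{L}{\sqrt p\,\e}\right)\right) \;=\; \cO\!\left(N + \frac{\sqrt N\, L}{\e}\right),
\end{equation*}
yielding the stated bound.

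The derivation itself is short; the only subtle point is the bookkeeping for $F(\bw_k)$, namely that one should not naively charge $N$ calls every iteration but rather observe that the snapshot is refreshed only with probability $p$, which is precisely what makes the choice $p = \Theta(1/N)$ balance the $N$ term against the $L/(\sqrt p\,\e)$ term. There is no further analytical obstacle.
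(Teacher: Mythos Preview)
Your proposal is correct and follows essentially the same approach as the paper's proof: it combines the iteration bound from~\Cref{eq: eg1_th2} with the expected per-iteration oracle cost of $pN+2$, then specializes to $p=2/N$. Your only additions are the explicit bookkeeping for the initial evaluation of $F(\bw_0)$ and the explanation of why the snapshot refresh costs $pN$ in expectation, both of which are implicit in the paper's two-line argument.
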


\begin{proof}[Proof of~\Cref{eq: eg1_th2}]
  As we have already mentioned, when all randomness is eliminated, that is
  $F_\xi = F$ and $p=1$, \Cref{alg: eg1} reduces to the extragradient. In
  that case, the convergence rate $\cO(1/K)$ would follow almost
  immediately from the proof of~\Cref{lem: eg1}. In a stochastic
  setting the proof is more subtle and we have to rely on~\Cref{lem:
    exp_max_lem} to deal with the error terms caused by
  randomness. Let
\begin{equation*}
\Theta_{k+1/2}(\bz) = \langle F(\bz_{k+1/2}), \bz_{k+1/2} -\bz\rangle + g(\bz_{k+1/2}) - g(\bz).
\end{equation*}
We will proceed as in~\Cref{lem: eg1} before getting~\eqref{eq:
  eg1_main1}. In particular, using~\eqref{eq: eg1_ip1} and~\eqref{eq: eg1_ip2} in~\eqref{eq: eg1_eq1} gives
\begin{align}\label{eq:rate_first}
  2\tau\Theta_{k+1/2}(\bz) +  \|\bz_{k+1} - \bz \|^2
  & \leq \alpha \| \bz_k - \bz\|^2 + (1-\a)\| \bw_k - \bz \|^2 \notag\\
  & \quad + 2\tau \langle F_{\xi_k}(\bw_k) - F_{\xi_k}(\bz_{k+1/2}), \bz_{k+1} - \bz_{k+1/2} \rangle \notag\\
  & \quad - (1-\alpha) \|\bz_{k+1/2} -\bw_k\|^2 -\| \bz_{k+1} - \bz_{k+1/2} \|^2 \notag\\
  & \quad + \underbrace{2\t\lr{F(\bz_{k+1/2})-F_{\xi_k}(\bz_{k+1/2})-F(\bw_k)+F_{\xi_k}(\bw_k), \bz_{k+1/2}-\bz},}_{e_1(\bz,k)}
\end{align}
where we call the last term by $e_1(\bz,k)$.

Now, we set $\alpha=1-p$. We want to rewrite
\eqref{eq:rate_first} using
$\Phi_{k}(\bz) = (1-p) \n{\bz_{k}-\bz}^2 + \n{\bw_k-\bz}^2$. For this, we need to add $ \| \bw_{k+1} - \bz \|^2 - \| \bw_k - \bz \|^2$ to both sides. Then, we define the error
\begin{align}
  e_2(\bz, k) &= p \n{\bw_k-\bz}^2+
  \n{\bw_{k+1}-\bz}^2 - \n{\bw_k-\bz}^2  - p \n{\bz_{k+1}-\bz}^2 \notag \\
  &=2\langle p\bz_{k+1} + (1-p)\bw_k - \bw_{k+1}, \bz \rangle - p\|\bz_{k+1} \|^2 - (1-p)\|\bw_k \|^2 + \|\bw_{k+1} \|^2.\label{eq: wje4}
\end{align}
With this at hand, we can cast~\eqref{eq:rate_first}  as
\begin{align*}
  2\tau\Theta_{k+1/2}(\bz) + \Phi_{k+1}(\bz)
  & \leq \Phi_{k}(\bz) +e_1(\bz,k) + e_2(\bz,k) \\
  & \quad +2\tau \langle F_{\xi_k}(\bw_k) - F_{\xi_k}(\bz_{k+1/2}), \bz_{k+1} - \bz_{k+1/2} \rangle \\
  & \quad - p \|\bz_{k+1/2} -\bw_k\|^2 -\| \bz_{k+1} - \bz_{k+1/2} \|^2.
\end{align*}
We sum this inequality over $k=0,\dots,K-1$, take maximum of both sides over $\bz\in\cC$, and
then take total expectation to obtain
\begin{align}
  2\t K\mathbb{E}\left[ \mathrm{Gap}(\bz^K) \right]
  &\leq \max_{\bz\in\mathcal{C}}\Phi_0(\bz) + \mathbb{E}\left[\max_{\bz\in\mathcal{C}}\sum_{k=0}^{K-1} \big(e_1(\bz, k) +
    e_2(\bz, k) \big)\right] \notag \\
  &\quad - \mathbb{E}\sum_{k=0}^{K-1}\Big(\| \bz_{k+1} - \bz_{k+1/2} \|^2+p \|\bz_{k+1/2} -\bw_k\|^2\Big) \notag\\
  &\quad +2\tau \mathbb{E}\sum_{k=0}^{K-1}\left[\langle F_{\xi_k}(\bw_k) - F_{\xi_k}(\bz_{k+1/2}), \bz_{k+1} - \bz_{k+1/2} \rangle\right]\label{eq: one_before_bound_gap}
\end{align}
where we used $\mathbb{E}\left[ \max\limits_{\bz\in\mathcal{C}} \sum\limits_{k=0}^{K-1}
  \Theta_{k+1/2}(\bz) \right] \geq K\mathbb{E}\left[ \mathrm{Gap}(\bz^K)
\right]$, which follows from monotonicity of $F$, linearity of
$\bz_{k+1/2}\mapsto \langle F(\bz), \bz_{k+1/2} - \bz \rangle$, and convexity of $g$.

The tower property, the estimation from~\eqref{eq: eg1_ip3}, and $1-\alpha=p$
applied on~\eqref{eq: one_before_bound_gap} imply
\begin{equation}\label{eq:bound_gap}
2\t K \mathbb{E}\left[ \mathrm{Gap}(\bz^K) \right] \leq \max_{\bz\in\mathcal{C}}\Phi_0(\bz) + \mathbb{E}\left[\max_{\bz\in\mathcal{C}}\sum_{k=0}^{K-1} \big(e_1(\bz, k) + e_2(\bz, k)\big) \right].
\end{equation}
Therefore, the proof will be complete upon deriving an upper bound for the second term on RHS.
We will instantiate~\Cref{lem: exp_max_lem} twice for bounding this
term. First, for $e_1(\bz, k)$ we set in~\Cref{lem: exp_max_lem},
\begin{alignat*}{3}
&\mathcal{F}_k = \sigma(\xi_0, \dots, \xi_{k-1}, \bw_k), ~~~&&\tilde
\bx_0 = \bz_0, ~~~&&\bu_{k+1} = 2\tau \left([F_{\xi_k}(\bz_{k+1/2}) - F_{\xi_k}(\bw_k)] - [F(\bz_{k+1/2}) - F(\bw_k)] \right),
\end{alignat*}
where by definition we set $\cF_0 = \s(\xi_0, \xi_{-1}, \bw_0)= \s(\xi_0)$. With this, we obtain the bound
\begin{align}\label{eq:bound_for_e1}
  \mathbb{E} \left[\max_{\bz\in\mathcal{C}} \sum_{k=0}^{K-1} e_1(\bz, k)\right] &=
  \mathbb{E} \left[\max_{\bz\in\mathcal{C}} \sum_{k=0}^{K-1}\lr{\bu_{k+1},\bz}\right] -
  \mathbb{E}\left[\sum_{k=0}^{K-1}\lr{\bu_{k+1},\bz_{k+1/2}} \right]=  \mathbb{E}
 \left[ \max_{\bz\in\mathcal{C}} \sum_{k=0}^{K-1}\lr{\bu_{k+1},\bz}\right] \notag\\
  &\leq \max_{\bz\in \cC}\frac 12 \n{\bz_0-\bz}^2+
\frac{1}{2} \sum_{k=0}^{K-1}\E \n{\bu_{k+1}}^2 \notag\\
  &\leq \max_{z\in \cC}\frac12 \n{\bz_0-\bz}^2+ 2\tau^2 L^2 \sum_{k=0}^{K-1}\E \n{\bz_{k+1/2}-\bw_k}^2,
\end{align}
where the second equality follows by the tower property, $\mathbb{E}_k [\bu_{k+1}] = 0$, and $\mathcal{F}_k$-measurability of $\bz_{k+1/2}$.
The last inequality is due to
\[\E\n{\bu_{k+1}}^2 =\E\left[\E_k\n{\bu_{k+1}}^2\right]\leq4\tau^2\E\left[\E_k\n{F_{\xi_k}(\bz_{k+1/2}) -
    F_{\xi_k}(\bw_k)}^2\right]\leq 4\tau^2 L^2 \E\n{\bz_{k+1/2}-\bw_k}^2, \]
    where we use the tower property, $\E \n{X-\E X}^2 \leq
\E\n{X}^2$, and~\Cref{asmp: asmp1}(iv).

Secondly, we set in \Cref{lem: exp_max_lem}
\begin{alignat*}{3}
&\mathcal{F}_k = \sigma(\xi_0, \dots, \xi_{k}, \bw_k), \qquad &&\tilde \bx_0 =
\bz_0, \qquad &&\bu_{k+1} =  p\bz_{k+1} +(1-p)\bw_k - \bw_{k+1},
\end{alignat*}
and use  $\mathbb{E}\left[\E_{k+1/2}[ \|\bw_{k+1} \|^2- p\|\bz_{k+1} \|^2 - (1-p)\|\bw_k \|^2 ]\right]=0$, to obtain the bound
\begin{align}\label{eq:bound_for_e2}
  \mathbb{E} \left[\max_{\bz\in\mathcal{C}} \sum_{k=0}^{K-1} e_2(\bz, k)\right] &=
   2\E\left[\max_{\bz\in \cC}\sum_{k=0}^{K-1}\lr{\bu_{k+1},z}\right]\notag\\ &\leq
    \max_{\bz\in \cC}\n{\bz_0-\bz}^2 +
\sum_{k=0}^{K-1}    \E\n{\bu_{k+1}}^2\notag \\
  &=
    \max_{\bz\in \cC}\n{\bz_0-\bz}^2 +
     p(1-p)\sum_{k=0}^{K-1}\E\n{\bz_{k+1}-\bw_k}^2,
 \end{align}
where the  inequality follows from~\Cref{lem: exp_max_lem} and
the second equality from the derivation
\begin{align*}
  \E\n{\bu_{k+1}}^2
  &=\E\left[\E_{k+1/2}\n{\bu_{k+1}}^2\right]= \E\left[\E_{k+1/2}\n{\E_{k+1/2}\left[\bw_{k+1}\right]-\bw_{k+1}}^2\right] \\
  &= \E \left[\E_{k+1/2}\n{\bw_{k+1}}^2 -\n{\E_{k+1/2}\left[\bw_{k+1}\right]}^2\right] \\
  &= \E\left[p\n{\bz_{k+1}}^2+(1-p)\n{\bw_k}^2 -\n{p\bz_{k+1}+(1-p)\bw_k}^2\right] \\
  &= p(1-p)\E\n{\bz_{k+1}-\bw_k}^2,
\end{align*}
which uses $\E\n{X-\E X}^2= \E \n{X}^2
- \n{\E X}^2$.

Combining \eqref{eq:bound_for_e1}, \eqref{eq:bound_for_e2}, and
\eqref{eq:bound_gap}, we finally arrive at
\begin{align}\label{eq:for_corol}
  2\t K \mathbb{E}\left[ \mathrm{Gap}(\bz^K) \right] &\leq
  \max_{\bz\in\mathcal{C}}\Phi_0(\bz) +\max_{\bz\in \cC}\frac 12 \n{\bz_0-\bz}^2+ 2\tau^2
  L^2 \sum_{k=0}^{K-1}\E \n{\bz_{k+1/2}-\bw_k}^2  \notag \\
  &\quad + \max_{\bz\in \cC}\n{\bz_0-\bz}^2 + p(1-p)\sum_{k=0}^{K-1}\E\n{\bz_{k+1}-\bw_k}^2
\end{align}
We have to estimate terms under the sum:
\begin{align}
&\quad \E\left[ \sum_{k=0}^{K-1}\left(2\tau^2 L^2 \n{\bz_{k+1/2}-\bw_k}^2 +
     p(1-p)\n{\bz_{k+1}-\bw_k}^2  \right)\right]\notag\\
& \leq p \E\left[ \sum_{k=0}^{K-1}\left(2 \n{\bz_{k+1/2}-\bw_k}^2
    +\n{\bz_{k+1}-\bw_k}^2  \right)\right]\notag\\
&\leq p \E\left[ \sum_{k=0}^{K-1}\left((2+\sqrt 2) \n{\bz_{k+1/2}-\bw_k}^2
    +(2+\sqrt 2) \n{\bz_{k+1}-\bz_{k+1/2}}^2  \right)\right]\notag\\
& \leq \frac{2+\sqrt 2}{1-\gamma}\Phi_0(\bz_*) \leq \frac{3.5}{1-\gamma} \max_{\bz\in \cC}\Phi_0(\bz), \label{eq: ineq_sub_last}
\end{align}
where the first inequality in~\eqref{eq: ineq_sub_last} uses~\Cref{lem: eg1}
and $1-\a = p$.

Now we will use that $\bw_0=\bz_0$ and, hence, $\Phi_0(\bz) =
(2-p)\n{\bz_0-\bz}^2\leq 2 \n{\bz_0-\bz}^2$ in~\eqref{eq:for_corol}. This yields
\begin{align*}
    2\t K \mathbb{E}\left[ \mathrm{Gap}(\bz^K) \right] &\leq \left(2
      + \frac 32 + \frac{7}{1-\gamma}\right)\max_{\bz\in
      \cC}\n{\bz_0-\bz}^2 = 7\left(\frac 12 + \frac{1}{1-\gamma}\right)\max_{\bz\in \cC}\n{\bz_0-\bz}^2.
  \end{align*}
  Finally, using  $\t = \frac{\sqrt{p}\gamma}{L}$, we obtain
\[ \mathbb{E}\left[ \mathrm{Gap}(\bz^K) \right] \leq \frac{7L}{2\sqrt p
 \gamma K}\left(\frac 12 + \frac{1}{1-\gamma}\right)\max_{\bz\in
 \cC}\n{\bz_0-\bz}^2 = \cO \left( \frac{L}{\sqrt p K} \right).
\]
In particular, with a stepsize $\t = \frac{\sqrt p}{2L}$, the right-hand side reduces to $\frac{17.5L}{\sqrt p K}\max_{\bz\in
 \cC}\n{\bz_0-\bz}^2$.
\end{proof}

\begin{proof}[Proof of \Cref{cor:eucl}]
In average each iteration costs $pN + 2
$ calls to $F_\xi$. To reach $\e$-accuracy we need $\left\lceil\cO \left(
    \frac{L}{\sqrt p \e} \right)\right\rceil$ iterations.
Hence, the total average complexity is $\cO \left( \frac{(pN+2)L}{\sqrt p \e}\right)$.
Finally, the optimal choice  $p=\frac{2}{N}$ gives $\cO \left(
  \frac{\sqrt{N}L}{\e}\right)$ complexity.
\end{proof}

To see the justification for the choice of $\alpha=1-p$, consider the
proof with any choice of $\alpha$. The resulting bound will be
$\mathcal{O}\left( \frac{1}{\sqrt{1-\alpha}} +
  \frac{\sqrt{1-\alpha}}{p} \right)$. Then $\alpha=1-p$ optimizes it in terms of $p$ dependence.
  Our rate guarantee on Theorem~\ref{eq: eg1_th2} is on the averaged iterate $\bz^K$, which is shown to be necessary to get the $O(1/K)$ rate for deterministic extragradient in~\cite{golowich2020last}.

\section{Bregman setup}\label{sec:br}
\subsection{Preliminaries}
In this section, we assume that $\mathcal{Z}$ is a normed vector space with a dual
space $\mathcal{Z}^\ast$ and primal-dual norm pair $\|\cdot\|$ and
$\|\cdot \|_\ast$. Let $\dgf\colon \cZ \to \mathbb{R}\cup\{+\infty\}$
be a proper convex lsc function that satisfies \textit{(i)}
$\dom g\subseteq \dom h$, \textit{(ii)} $h$ is differentiable over
$\dom \partial h$, \textit{(iii)} $h$ is $1$-strongly convex on $\dom g$. Then
we can define the Bregman distance
$D\colon \dom g\times \dom \partial \dgf\to \R_+ $ associated with $h$
by
\[\D(\bu,\bv)\coloneqq \DD{\bu}{\bv}.\]
  Note that since $\dgf$ is $1$-strongly convex with respect to norm
  $\n{\cdot}$, we have $D(\bu,\bv)\geq \frac{1}{2}\n{\bu-\bv}^2$.

  Naturally, we shall say that $F\colon \dom g\to \cZ^*$ is
  $L_F$-Lipschitz, if $\n{F(\bu)-F(\bv)}_*\leq L_F \n{\bu-\bv}$,
  $\forall \bu,\bv$. However, Lipschitzness for a stochastic oracle
  this time will be more involved. Evidently, we prefer stochastic
  oracles $F_\xi$ of $F$ with as small $L$ as possible. Moreover, the
  proof of~\Cref{lem: eg1} indicates that in $k$-th iteration we need
  Lipschitzness only for already known two iterates. Hence, following~\cite{grigoriadis1995sublinear,carmon2019variance}, in contrast to Alg.~\ref{alg: eg1}, we will not
  fix distribution $Q$ in the beginning, but allow it to vary from iteration to
  iteration.  Formally, this amounts to the
  following definition.
\begin{definition}\label[definition]{def: def1}
  We say that $F$ has a stochastic oracle $F_\xi$ that is \emph{variable} $L$-Lipschitz in mean, if   for any
  $\bu,\bv\in \dom g$ there exists a distribution $Q_{\bu,\bv} $
  such that
  \begin{enumerate}[(i)]

  \item $F$ is unbiased: $F(\bz) = \E_{\xi\sim Q_{\bu,\bv}}\left[ F_{\xi}(\bz)\right]$\quad  $\forall \bz \in \dom g$;
  \item
    $\E_{\xi\sim Q_{\bu,\bv}}\left[\| F_\xi(\bu) - F_\xi(\bv) \|^2_*\right] \leq L^2 \| \bu-\bv \|^2$.
  \end{enumerate}
\end{definition}
Note that the second condition holds only for given $\bu,\bv$, but
  the constant $L$ is universal for all $\bu,\bv$.
Changing $\bu,\bv$
  also changes a distribution, hence the name ``variable''.  Without
  loss of generality, we denote any distribution that realizes the
  above Lipschitz bound for given $\bu$, $\bv$ by $Q_{\bu,\bv}$.
  This
  definition resembles the one in \cite[Definition 2]{carmon2019variance}.
  It is easy to see when $Q_{\bu, \bv}=Q$ for all
  $\bu, \bv$, we get the same definition as before in \Cref{asmp: asmp1}.

  We now introduce \Cref{asmp: asmp2} which will replace and generalize \Cref{asmp:
  asmp1}(iv).

\begin{assumption}\label{asmp: asmp2}~
The operator $F\colon \dom g \to \mathcal{Z}^\ast$ has a  stochastic  oracle $F_\xi$ that is variable $L$-Lipschitz in mean (see~\Cref{def: def1}).
\end{assumption}

\subsection{Mirror-Prox with variance reduction}
\begin{algorithm}[t]
\caption{Mirror-prox with variance reduction }
\label{alg:extra_ahmet}
\begin{algorithmic}[1]
    \STATE {\bfseries Input:} Step size
    $\tau$, $\a \in (0,1)$, $K>0$. Let $\bz^{-1}_j = \bz_0^0=\bw^0=\bz_0, \forall j\in[K]$
    \vspace{.2cm}
    \FOR{$s=0,1\dots$}
    \FOR{$k = 0,1\ldots K-1$}
    \STATE \label{mp:l1} $\bz_{k+1/2}^{s} = \argmin_{\bz}\Bigl\{ g(\bz) + \langle F(\bw^s) ,
    \bz \rangle + \frac{\a}{\tau} D(\bz, \bz_k^s) + \frac{1-\a}{\t}
    D(\bz, \mathbf{\w}^s)\Bigr\}$\label{eq:extra_double1}
    \STATE Fix distribution $Q_{\bz_{k+1/2}^s,\bw^s}$ and sample
    $\xi_k^s$ according to it
    \STATE $\Fhat(\bz_{k+1/2}^s) = F(\bw^s) +
        F_{\xi_k^s}(\bz_{k+1/2}^s) - F_{\xi_k^s}(\bw^s)$
    \STATE \label{mp:l2} $\bz_{k+1}^{s} = \argmin_{\bz}\Bigl\{ g(\bz) + \langle \Fhat(\bz_{k+1/2}^s), \bz \rangle +
        \frac{\a}{\tau} D(\bz, \bz_k^s) + \frac{1-\a}{\t} D(\bz,\mathbf{\w}^s)\Bigr\}$\label{eq:extra_loopless_double2}
        \ENDFOR
\STATE $\bw^{s+1} = \frac 1 K \sum_{k=1}^K \bz_k^s$\\[1mm]
\STATE $\grad(\mathbf{\w}^{s+1}) = \frac{1}{K} \sum_{k=1}^K \grad(\bz_k^s) $
\STATE $\bz^{s+1}_0 = \bz_K^s$
\ENDFOR
\end{algorithmic}
\label{alg: mp1}
\end{algorithm}
In this setting, we could simply adjust the steps of Alg.~\ref{alg:
  eg1} and correspondingly the analysis of~\Cref{lem: eg1}. However,
to show a convergence rate, double randomization in Alg.~\ref{alg: eg1}
causes technical complications. For this reason, in the Bregman setup
we propose a double loop variant of Alg.~\ref{alg: eg1}, similar to the classical
SVRG~\cite{johnson2013accelerating}. Our algorithm can be seen as a variant of Mirror-Prox~\cite{nemirovski2004prox} with variance reduction. Now it should
be clear that Alg.~\ref{alg: eg1} is a randomized version of Alg.~\ref{alg: mp1}
with $p = \frac 1 K$ and a particular choice $D(\bz,\bz')=\frac 12 \n{\bz-\bz'}^2_2$.

The technical reason for this change is the calculation given in~\eqref{eq: wje4}. In fact, all the other steps in the previous proofs would go through by using three point identity, except this step, which is inherently using the properties of $\ell_2$-norm.
By removing double randomization and introducing double loop instead, step~\eqref{eq: wje4} will not be needed in the analysis of Bregman case.

Compared to Alg.~\ref{alg: eg1}, $\bw^s$ serves the same purpose as $\bw_k$: the snapshot point in the language of SVRG~\cite{johnson2013accelerating}.
Since we have two loops in this case, we get $\bw^s$ by averaging, again, similar to SVRG for non-strongly convex optimization~\cite{reddi2016stochastic,allen2016improved}.
The difference due to Bregman setup is that we have the additional point $\bar \bw^s$ that averages in the dual space.
This operation does not incur additional cost.

\begin{remark}\label{rem: mp}
For running the algorithm in practice, we suggest $K=\frac{N}{2}$, $\a = 1
- \frac 1K$, and $\t = \frac{0.99\sqrt p}{L}$.
\end{remark}

\subsection{Analysis}
Similar to Euclidean case, we define for the iterates $(\bz_k^s)$
of Alg.~\ref{alg: mp1} and any $\bz\in \dom g$,
\begin{align*}
\Phi^{s}(\bz) &\coloneqq \a D(\bz, \bz_{0}^{s}) + (1-\a)\sum_{j=1}^K
D(\bz, \bz_j^{s-1}),
\end{align*}
      where $\Phi^0(\bz) = (\a + K(1-\a))D(\bz, \bz_0)$, due to the definition of $\bz^{-1}$ in Alg.~\ref{alg: mp1}.
      Since we have two indices $s, k$ in Alg.~\ref{alg: mp1}, we define $\mathcal{F}_k^s = \sigma(\bz_{1/2}^0, \ldots, \bz^0_{k-1/2}, \ldots, \bz^s_{1/2}, \ldots, \bz_{k+1/2}^s)$ and $\mathbb{E}_{s, k}[\cdot] = \mathbb{E}\left[\cdot \vert \mathcal{F}_k^s \right]$.

To analyze $\bz_{k+1}^s$ and $\bz_{k+1/2}^s$, we introduce the next lemma and provide its proof in~\Cref{sec: appendix}.
\begin{lemma}\label{lem:prox_update_br}
  Let $g$ be proper convex lsc, and
  \[\bz^+ = \argmin_\bz\left\{g(\bz) + \lr{\bu, \bz} + \a D(\bz, \bz_1) +
      (1-\a)D(\bz, \bz_2)\right\}.\]
  Then, for any $\bz$,
  \begin{align*}
g(\bz)-g(\bz^+) + \lr{\bu, \bz-\bz^+}\geq  D(\bz, \bz^+) +\a&\left( D(\bz^+, \bz_1) - D(\bz, \bz_1)  \right) \\
+ (1-\a) &\left(D(\bz^+, \bz_2) - D(\bz, \bz_2)  \right).
\end{align*}
\end{lemma}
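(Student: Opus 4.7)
The plan is to combine the first-order optimality condition for $\bz^+$ with the subgradient inequality for the convex function $g$, and then convert the resulting linear expressions in $\nabla h(\cdot)$ into Bregman divergences via the classical three-point identity.

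First I would write the optimality condition. Since $\bz^+$ minimizes a sum of the convex proper lsc $g$ and a smooth convex function, there exists $\tilde{\nabla} g(\bz^+) \in \partial g(\bz^+)$ such that
\[
0 \;=\; \tilde{\nabla} g(\bz^+) \;+\; \bu \;+\; \nabla h(\bz^+) \;-\; \a \nabla h(\bz_1) \;-\; (1-\a)\nabla h(\bz_2).
\]
The subgradient inequality for $g$ gives $g(\bz) - g(\bz^+) \geq \lr{\tilde{\nabla} g(\bz^+),\, \bz - \bz^+}$, and plugging in the expression above yields
\[
g(\bz) - g(\bz^+) + \lr{\bu,\bz-\bz^+} \;\geq\; \lr{\a \nabla h(\bz_1) + (1-\a)\nabla h(\bz_2) - \nabla h(\bz^+),\; \bz - \bz^+}.
\]

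Next I would invoke the three-point identity of Bregman divergences, which follows directly from the definition $D(\bu,\bv) = h(\bu) - h(\bv) - \lr{\nabla h(\bv), \bu-\bv}$: for any $a,b,c$,
\[
D(a,c) - D(a,b) - D(b,c) \;=\; \lr{\nabla h(b) - \nabla h(c),\; a - b}.
\]
Specializing to $a = \bz$, $b = \bz^+$, $c = \bz_i$ and rearranging,
\[
D(\bz^+,\bz_i) - D(\bz,\bz_i) \;=\; -D(\bz,\bz^+) + \lr{\nabla h(\bz_i) - \nabla h(\bz^+),\; \bz - \bz^+}.
\]
Multiplying by $\a$ for $i=1$ and by $1-\a$ for $i=2$, summing, and adding $D(\bz,\bz^+)$ to both sides, the two copies of $D(\bz,\bz^+)$ cancel and the right-hand side becomes precisely the inner product appearing in the bound derived from optimality. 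Combining the two displays yields the claim.

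The proof is essentially mechanical once these two ingredients are in place, so there is no real obstacle; the only thing to be careful about is the orientation of the three-point identity (which variable plays the role of the ``middle'' point $b$). Choosing $b=\bz^+$ is the key move because it causes the Bregman terms centered at $\bz^+$ to combine with the stand-alone $D(\bz,\bz^+)$ on the right-hand side in exactly the right way.
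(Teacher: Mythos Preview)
Your proposal is correct and follows essentially the same approach as the paper: write the optimality condition for $\bz^+$, use the subgradient inequality for $g$, and then apply the three-point identity to each of $\bz_1$ and $\bz_2$ to convert the inner product in $\nabla h$ into the desired Bregman terms. The paper's presentation is slightly terser, but the argument is identical.
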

\noindent We now introduce some definitions to be used in the proofs of this section.
\begin{align}
\Theta_{k+1/2}^s(\bz) &= \langle F(\bz_{k+1/2}^s), \bz_{k+1/2}^s -\bz\rangle + g(\bz_{k+1/2}^s) - g(\bz), \label{eq: def_theta_dl}\\
e(\bz, s, k) &= \tau \langle F(\bz_{k+1/2}^s) -
F_{\xi_k^s}(\bz_{k+1/2}^s) - F(\bw^s) + F_{\xi_k^s}(\bw^s),
\bz_{k+1/2}^s - \bz \rangle.\label{eq: def_e_dl}\\
\d(s, k) &= \tau \langle F_{\xi_k^s}(\bw^s) -
F_{\xi_k^s}(\bz_{k+1/2}^s), \bz_{k+1}^s - \bz_{k+1/2}^s
\rangle-\frac{1}{2}\| \bz_{k+1}^s -  \bz_{k+1/2}^s\|^2 -
\frac{1-\alpha}{2}\| \bz_{k+1/2}^s - \bw^s \|^2.\label{eq: delta}
\end{align}
The first expression will be needed for deriving the rate, the second
term $e(\bz, s,k)$ for controlling the error caused by
$\max_{\bz\in \cC}\E[\cdot]\neq \E \max_{\bz\in \cC}[\cdot]$, and the third term
$\d(s,k)$ will be nonpositive after taking expectation.
\begin{lemma}\label{lem: eg_loop}
Let~\Cref{asmp: asmp1} hold, $\alpha \in [0, 1)$, and
$\tau =\frac{\sqrt{1-\a}}{L}\gamma$ for $\gamma \in (0, 1)$. We have the following:
\begin{enumerate}[(i)]
\item For any $\bz \in \mathcal{Z}$ and $s, K\in\mathbb{N}$, it holds that
\begin{align*}
\sum_{k=0}^{K-1} \tau \Theta_{k+1/2}^s(\bz) &+ \alpha D(\bz, \bz_0^{s+1})+
(1-\alpha)\sum_{j=1}^K D(\bz, \bz_j^s)  \\ &\leq \alpha D(\bz, \bz_0^s) + (1-\alpha) \sum_{j=1}^K D(\bz, \bz_j^{s-1})
+\sum_{k=0}^{K-1}  [e(\bz, s, k) + \delta(s,k)].
\end{align*}
\item For any solution $\bz_\ast$, it holds that
\begin{equation*}
\mathbb{E}_{s, 0} \left[ \Phi^{s+1}(\bz_\ast) \right] \leq \Phi^s(\bz_\ast) - \frac{(1-\alpha)(1-\gamma^2)}{2} \sum_{k=0}^{K-1}  \mathbb{E}_{s, 0}\left[\|\bz_{k+1/2}^s - \bw^s \|^2\right].
\end{equation*}
\item It holds that $\sum_{s=0}^\infty \sum_{k=0}^{K-1} \E
    \|\bz_{k+1/2}^s - \bw^s \|^2  \leq \frac{2}{(1-\alpha)(1-\gamma^2)}\Phi^0(\bz_\ast)$.
\end{enumerate}
\end{lemma}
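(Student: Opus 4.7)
The plan is to apply \Cref{lem:prox_update_br} to each of the two subproblems in \Cref{alg: mp1}, lines~\ref{mp:l1} and~\ref{mp:l2}. For line~\ref{mp:l1} defining $\bz_{k+1/2}^s$, I test with $\bz_{k+1}^s$; for line~\ref{mp:l2} defining $\bz_{k+1}^s$, I test with an arbitrary $\bz \in \dom g$. Adding the two inequalities, the $\a D(\bz_{k+1}^s,\bz_k^s)$ and $(1-\a) D(\bz_{k+1}^s, \bw^s)$ terms cancel. Then adding $\t \lr{F(\bz_{k+1/2}^s), \bz_{k+1/2}^s - \bz}$ to both sides produces $\t\Theta_{k+1/2}^s(\bz)$ on the LHS. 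Regrouping the inner products on the RHS yields $\t\lr{F(\bz_{k+1/2}^s) - \Fhat(\bz_{k+1/2}^s), \bz_{k+1/2}^s - \bz} + \t\lr{\Fhat(\bz_{k+1/2}^s) - F(\bw^s), \bz_{k+1/2}^s - \bz_{k+1}^s}$: the first is exactly $e(\bz,s,k)$, while the second is the leading inner product in $\d(s,k)$. The remaining terms $-D(\bz_{k+1}^s, \bz_{k+1/2}^s)$ and $-(1-\a) D(\bz_{k+1/2}^s, \bw^s)$ are bounded using $1$-strong convexity of $\dgf$ by $-\tfrac12\|\bz_{k+1}^s-\bz_{k+1/2}^s\|^2$ and $-\tfrac{1-\a}{2}\|\bz_{k+1/2}^s-\bw^s\|^2$, completing $\d(s,k)$; the stray $-\a D(\bz_{k+1/2}^s, \bz_k^s)$ is simply dropped. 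This gives the per-$k$ inequality
\[ \t\Theta_{k+1/2}^s(\bz) + D(\bz,\bz_{k+1}^s) \leq \a D(\bz,\bz_k^s) + (1-\a) D(\bz,\bw^s) + e(\bz,s,k) + \d(s,k). \]

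Summing over $k=0,\dots,K-1$ and using $\bz_K^s=\bz_0^{s+1}$, the Bregman terms partially telescope to give $\a D(\bz,\bz_0^{s+1}) + (1-\a)\sum_{j=1}^K D(\bz,\bz_j^s)$ on the LHS and $\a D(\bz,\bz_0^s) + K(1-\a) D(\bz,\bw^s)$ on the RHS. The remaining step for~(i) is to replace $K D(\bz,\bw^s)$ by $\sum_{j=1}^K D(\bz,\bz_j^{s-1})$; this is the main obstacle and the motivation for the dual-averaging definition of $\bw^s$ in the algorithm. Let $\Psi(\bz) = \sum_{j=1}^K D(\bz,\bz_j^{s-1})$. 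Its critical-point equation reads $K \grad(\bz) = \sum_j \grad(\bz_j^{s-1})$, which is exactly what lines~9--10 of Alg.~\ref{alg: mp1} impose on $\bw^s$. Because $\Psi$ is $K$-strongly convex, $\bw^s$ is its unique minimizer. A direct comparison of the $h(\bz)$ and linear-in-$\bz$ parts shows $\Psi(\bz) - K D(\bz, \bw^s)$ is independent of $\bz$; evaluating at $\bz = \bw^s$ yields $\Psi(\bz) - K D(\bz, \bw^s) = \Psi(\bw^s) = \sum_j D(\bw^s, \bz_j^{s-1}) \geq 0$. Hence $K D(\bz,\bw^s) \leq \sum_j D(\bz,\bz_j^{s-1})$, establishing~(i).

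For~(ii), set $\bz = \bz_\ast$: monotonicity of $F$ and the VI definition give $\Theta_{k+1/2}^s(\bz_\ast) \geq 0$, so those terms drop. Taking $\E_{s,0}$ and using the tower property, $\E_{s,0}[e(\bz_\ast,s,k)] = 0$ because $F_{\xi_k^s}$ is unbiased conditional on $\cF_k^s$ while $\bz_{k+1/2}^s$ and $\bw^s$ are $\cF_k^s$-measurable. For $\d(s,k)$, Cauchy--Schwarz and Young's inequality with parameter $1$ give
\[ \t\lr{F_{\xi_k^s}(\bw^s) - F_{\xi_k^s}(\bz_{k+1/2}^s), \bz_{k+1}^s - \bz_{k+1/2}^s} \leq \tfrac{\t^2}{2}\|F_{\xi_k^s}(\bz_{k+1/2}^s) - F_{\xi_k^s}(\bw^s)\|_\ast^2 + \tfrac{1}{2} \|\bz_{k+1}^s - \bz_{k+1/2}^s\|^2, \]
so the $-\tfrac12\|\bz_{k+1}^s-\bz_{k+1/2}^s\|^2$ inside $\d(s,k)$ is absorbed. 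Applying \Cref{def: def1}(ii) conditional on $\cF_k^s$ together with $\t = \tfrac{\sqrt{1-\a}}{L}\gamma$ collapses the remaining bound to $-\tfrac{(1-\a)(1-\gamma^2)}{2}\|\bz_{k+1/2}^s-\bw^s\|^2$, giving the Lyapunov descent in~(ii). Finally,~(iii) follows by taking total expectation of~(ii), telescoping over $s=0,1,\dots$, and using $\Phi^{s+1}(\bz_\ast) \geq 0$.
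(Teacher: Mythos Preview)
Your argument conflates the two distinct anchor points in \Cref{alg: mp1}: the primal average $\bw^s=\frac{1}{K}\sum_{j}\bz_j^{s-1}$ (line~9) and the dual-averaged point $\bwb^s$ defined by $\grad(\bwb^s)=\frac{1}{K}\sum_j\grad(\bz_j^{s-1})$ (line~10). The Bregman regularizers in lines~\ref{mp:l1} and~\ref{mp:l2} use $\bwb^s$, so \Cref{lem:prox_update_br} produces $D(\bz_{k+1/2}^s,\bwb^s)$ and $D(\bz,\bwb^s)$, and $1$-strong convexity only gives $D(\bz_{k+1/2}^s,\bwb^s)\geq\tfrac12\|\bz_{k+1/2}^s-\bwb^s\|^2$. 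But $\d(s,k)$ in~\eqref{eq: delta}, the oracle $F_{\xi_k^s}(\bw^s)$, and the variable-Lipschitz bound all involve the primal average $\bw^s$; in particular, your Young-plus-Lipschitz step in~(ii) yields $\tfrac{(1-\a)\gamma^2}{2}\|\bz_{k+1/2}^s-\bw^s\|^2$, which cannot be absorbed by $-\tfrac{1-\a}{2}\|\bz_{k+1/2}^s-\bwb^s\|^2$. So neither the claimed per-$k$ inequality (with the paper's $\d(s,k)$) nor the descent in~(ii) follows.

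The paper closes this gap with the identity
\[
D(\bu,\bwb^s)-D(\bv,\bwb^s)=\frac{1}{K}\sum_{j=1}^{K}\bigl(D(\bu,\bz_j^{s-1})-D(\bv,\bz_j^{s-1})\bigr),
\]
which is immediate from $\grad(\bwb^s)=\frac{1}{K}\sum_j\grad(\bz_j^{s-1})$. This replaces all $\bwb^s$-Bregman differences by averages over the inner iterates; then Jensen's inequality $\frac{1}{K}\sum_j D(\bz_{k+1/2}^s,\bz_j^{s-1})\geq\tfrac12\|\bz_{k+1/2}^s-\bw^s\|^2$ recovers the correct primal-average point. Your inequality $K\,D(\bz,\bwb^s)\leq\sum_j D(\bz,\bz_j^{s-1})$ is in fact a corollary of this identity, but it is the other half---turning $D(\bz_{k+1/2}^s,\bwb^s)$ into a lower bound involving $\bw^s$---that your proof is missing.
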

\begin{remark}
We use~\Cref{lem: eg_loop}(i) and~\Cref{lem: eg_loop}(iii) for proving the convergence rate.
On the other hand, \Cref{lem: eg_loop}(ii) can be used to derive subsequential convergence, which we do not include for brevity.
\end{remark}
\begin{proof}[Proof of~\Cref{lem: eg_loop}]
    Applying \Cref{lem:prox_update_br} to $\bz_{k+1/2}^s$ update, with $\bz = \bz_{k+1}^s$, we have
  \begin{multline}\label{eq:br-1}
\t \left(g(\bz_{k+1}^s)-g(\bz_{k+1/2}^s) + \lr{F(\bw^s),
    \bz_{k+1}^s-\bz_{k+1/2}^s}\right) \geq D(\bz_{k+1}^s, \bz_{k+1/2}^s)\\ +\a \Big(D(\bz_{k+1/2}^s, \bz_k^s) - D(\bz_{k+1}^s, \bz_k^s)  \Big)
+ (1-\a) \Big( D(\bz_{k+1/2}^s, \bwb^s) - D(\bz_{k+1}^s, \bwb^s)  \Big).
\end{multline}
Applying \Cref{lem:prox_update_br} to $\bz_{k+1}^s$ update with a general
$\bz\in \cZ$, we have
  \begin{multline}\label{eq:br-2}
\t \left(g(\bz)-g(\bz_{k+1}^s)  + \lr{\Fhat(\bz_{k+1/2}^s),
    \bz - \bz_{k+1}^s}\right) \geq D(\bz, \bz_{k+1}^s) \\
    + \a \Big(D(\bz_{k+1}^s, \bz_k^s) - D(\bz, \bz_k^s)  \Big)
+ (1-\a) \Big( D(\bz_{k+1}^s, \bwb^s) - D(\bz, \bwb^s)  \Big).
\end{multline}
Note that for any $\bu, \bv$, the expression $D(\bu, \bwb^s) - D(\bv,
\bwb^s)$ is linear in terms of $\nabla h(\bwb^s)$, that is
\begin{equation}\label{eq:simpler}
D(\bu, \bwb^s) - D(\bv, \bwb^s) =
  \frac{1}{K}\sum_{j=1}^K\left(D(\bu, \bz_j^{s-1}) - D(\bv,
    \bz_j^{s-1})\right).
\end{equation}
Summing up \eqref{eq:br-1} and \eqref{eq:br-2} and  using \eqref{eq:simpler} with definition of $\Fhat(\bz_{k+1/2}^s)$, we
obtain
\begin{align}
\tau\Bigl( g(\bz) - g(\bz_{k+1/2}^s) &+ \langle \Fhat(\bz_{k+1/2}^s), \bz - \bz_{k+1/2}^s
  \rangle \Bigr) \geq D(\bz, \bz_{k+1}^s) - \alpha D(\bz, \bz_k^s)
\notag \\
&+ \frac{1-\alpha}{K} \sum_{j=1}^K D(\bz_{k+1/2}^s, \bz_j^{s-1}) - \frac{1-\alpha}{K}\sum_{j=1}^K D(\bz, \bz_j^{s-1}) + D(\bz_{k+1}^s, \bz_{k+1/2}^s) \notag\\
&+ \tau \langle F_{\xi_k^s}(\bz_{k+1/2}^s) - F_{\xi_k^s}(\bw^s), \bz_{k+1}^s - \bz_{k+1/2}^s \rangle.\label{eq: mp1_lem_simp1}
\end{align}
By $D(\bu,\bv)\geq \frac 12 \n{\bu-\bv}^2$ and Jensen's inequality, we have
\begin{align}
\frac{1-\alpha}{K} \sum_{j=1}^K D(\bz_{k+1/2}^s, \bz_j^{s-1}) &\geq \frac{1-\alpha}{K}\sum_{j=1}^K \frac{1}{2} \| \bz_{k+1/2}^s - \bz_j^{s-1} \|^2 \geq \frac{1-\alpha}{2} \| \bz_{k+1/2}^s - \bw^s \|^2, \label{eq: breg_to_euc_1} \\
D(\bz_{k+1}^s, \bz_{k+1/2}^s) &\geq \frac{1}{2} \| \bz_{k+1}^s - \bz_{k+1/2}^s \|^2. \label{eq: breg_to_euc_2}
\end{align}
By using~\eqref{eq: def_theta_dl}, ~\eqref{eq: breg_to_euc_1}, and~\eqref{eq: breg_to_euc_2} in~\eqref{eq: mp1_lem_simp1}, we deduce
\begin{multline*}
\tau \Theta_{k+1/2}^s(\bz) + D(\bz, \bz_{k+1}^s) \leq \alpha D(\bz, \bz_k^s) + \frac{1-\alpha}{K}\sum_{j=1}^K D(\bz, \bz_j^{s-1}) \\
+\tau \langle F_{\xi_k^s}(\bw^s) - F_{\xi_k^s}(\bz_{k+1/2}^s), \bz_{k+1}^s - \bz_{k+1/2}^s \rangle -\frac{1}{2} \| \bz_{k+1}^s - \bz_{k+1/2}^s \|^2 - \frac{1-\alpha}{2}\| \bz_{k+1/2}^s -\bw^s \|^2 \\
+\underbrace{\tau \langle F(\bz_{k+1/2}^s) - \Fhat(\bz_{k+1/2}^s), \bz_{k+1/2}^s - \bz \rangle}_{e(\bz, s, k)},
\end{multline*}
where we defined the last term as $e(\bz, s, k)$ (see~\eqref{eq: def_e_dl}).
We sum this inequality over $k$ to obtain the result in $(i)$.

Next, similar to~\eqref{eq: eg1_ip3}, we estimate by~\Cref{asmp: asmp2}
and Young's  inequality
\begin{align}
\tau \mathbb{E}_{s, k} \langle F_{\xi_k^s}(\bw^s) - F_{\xi_k^s}(\bz_{k+1/2}^s)&, \bz_{k+1}^s - \bz_{k+1/2}^s \rangle \notag \\ &\leq \mathbb{E}_{s, k}\left[\frac{\tau^2}{2} \| F_{\xi_k^s}(\bw^s) - F_{\xi_k^s}(\bz_{k+1/2}^s) \|^2_\ast + \frac{1}{2} \| \bz_{k+1}^s - \bz_{k+1/2}^s \|^2\right] \notag \\
&\leq \frac{(1-\alpha)\gamma^2}{2} \| \bz_{k+1/2}^s-\bw^s \|^2 + \frac{1}{2} \mathbb{E}_{s, k}\| \bz_{k+1}^s - \bz_{k+1/2}^s \|^2,
\label{eq: mp1_4}
\end{align}
since $\tau^2L^2 = (1-\alpha)\gamma^2$.
We take expectation of~\eqref{eq: mp1_lem_simp1}, plug in $\bz=\bz_\ast$; use~\eqref{eq: eg1_ip2.5},~\eqref{eq: mp1_4},~\eqref{eq: breg_to_euc_1}, and~\eqref{eq: breg_to_euc_2} to get
\begin{equation}
\mathbb{E}_{s, k}\left[ D(\bz_\ast, \bz_{k+1}^s) \right] \leq \alpha
D(\bz_\ast, \bz_k^s) + \frac{1-\alpha}{K}\sum_{j=1}^K D(\bz_\ast,
\bz_j^{s-1})
+ \frac{(1-\alpha)(\gamma^2-1)}{2} \|\bz_{k+1/2}^s - \bw^s \|^2.\label{eq: mp1_5}
\end{equation}
By using $\mathbb{E}_{s, 0}[\cdot] =
\mathbb{E}_{s, 0} \left[ \mathbb{E}_{s, k}[\cdot]\right]$, we have
\begin{equation}
\mathbb{E}_{s, 0} D(\bz_\ast, \bz_{k+1}^s) \leq \E_{s,0}\Bigl[\alpha
D(\bz_\ast, \bz_k^s) + \frac{1-\alpha}{K}\sum_{j=1}^K D(\bz_\ast,
\bz_j^{s-1})
- \frac{(1-\alpha)(1-\gamma^2)}{2} \|\bz_{k+1/2}^s - \bw^s \|^2\Bigr].\label{eq: mp1_6}
\end{equation}
Summing this inequality over $k=0,\dots, K-1$ and using the definition
of $\Phi^s(\bz_*)$ together with $\bz^{s+1}_0 = \bz^s_K$, we  derive \emph{(ii)}.

 Finally, we take total expectation of \emph{(ii)} and sum the inequality over $s$ to obtain \emph{(iii)}.
\end{proof}

In order to prove the convergence rate, we need the Bregman version
of~\Cref{lem: exp_max_lem}. The proof of the lemma is in~\Cref{sec: appendix}.
\begin{lemma}\label{lem: exp_max_lem_br}
Let $\mathcal{F}=(\mathcal{F}_{k}^s)_{s\geq 0, k \in [0, K-1]}$ be a filtration and $(\bu_k^s)$ a stochastic process adapted to $\mathcal{F}$ with $\mathbb{E}[\bu_{k+1}^s | \mathcal{F}_{k}^s] = 0$. Given $
\bx_0\in\mathcal{Z}$,
for any $S \in \mathbb{N}$ and any compact set
$\mathcal{C}\subset\dom g$
\begin{equation*}
\mathbb{E}\left[\max_{\bx\in\mathcal{C}} \sum_{s=0}^{S-1}\sum_{k=0}^{K-1} \langle \bu_{k+1}^s, \bx \rangle\right] \leq \max_{\bx\in\mathcal{C}}D(\bx,  \bx_0) +\frac{1}{2} \sum_{s=0}^{S-1}\sum_{k=0}^{K-1} \E\| \bu_{k+1}^s \|^2_{\ast}.
\end{equation*}
\end{lemma}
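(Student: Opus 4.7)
The strategy is to introduce a ``ghost'' mirror-descent sequence driven by the random increments $\bu_{k+1}^s$, obtain a per-step regret bound via \Cref{lem:prox_update_br}, telescope across $(s,k)$, and then exploit the martingale property.

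Concretely, I would define $\bv_0^0 = \bx_0$ and, walking through the pairs $(s,k)$ in lexicographic order (so $\bv_0^{s+1} = \bv_K^s$), set
\[
\bv_{k+1}^s = \argmin_{\bx\in\dom g}\Bigl\{ \langle -\bu_{k+1}^s, \bx\rangle + D(\bx, \bv_k^s)\Bigr\}.
\]
Applying \Cref{lem:prox_update_br} with $g=0$, $\a=0$, $\bu = -\bu_{k+1}^s$, $\bz_2 = \bv_k^s$, $\bz^+ = \bv_{k+1}^s$ and a generic test point $\bx$ gives, after rearrangement,
\[
\langle \bu_{k+1}^s, \bx\rangle \leq \langle \bu_{k+1}^s, \bv_{k+1}^s\rangle + D(\bx, \bv_k^s) - D(\bx, \bv_{k+1}^s) - D(\bv_{k+1}^s, \bv_k^s).
\]
I would then split $\langle \bu_{k+1}^s, \bv_{k+1}^s\rangle = \langle \bu_{k+1}^s, \bv_k^s\rangle + \langle \bu_{k+1}^s, \bv_{k+1}^s - \bv_k^s\rangle$, bound the second piece via Fenchel/Young and Cauchy--Schwarz by $\tfrac12\|\bu_{k+1}^s\|_*^2 + \tfrac12\|\bv_{k+1}^s - \bv_k^s\|^2$, and finally absorb the squared-norm piece using $1$-strong convexity of $h$, namely $D(\bv_{k+1}^s, \bv_k^s) \geq \tfrac12\|\bv_{k+1}^s - \bv_k^s\|^2$. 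This yields
\[
\langle \bu_{k+1}^s, \bx\rangle \leq \langle \bu_{k+1}^s, \bv_k^s\rangle + \tfrac12 \|\bu_{k+1}^s\|_*^2 + D(\bx, \bv_k^s) - D(\bx, \bv_{k+1}^s).
\]

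Next I would sum over all pairs $(s,k)$ with $s=0,\dots,S-1$ and $k=0,\dots,K-1$. The Bregman terms telescope to $D(\bx, \bv_0^0) - D(\bx, \bv_K^{S-1}) \leq D(\bx, \bx_0)$ using nonnegativity of $D$. Taking the maximum over $\bx\in\mathcal{C}$ and then total expectation gives
\[
\mathbb{E}\Bigl[\max_{\bx\in\mathcal{C}}\sum_{s,k}\langle \bu_{k+1}^s, \bx\rangle\Bigr] \leq \max_{\bx\in\mathcal{C}} D(\bx, \bx_0) + \tfrac12 \sum_{s,k}\mathbb{E}\|\bu_{k+1}^s\|_*^2 + \mathbb{E}\Bigl[\sum_{s,k} \langle \bu_{k+1}^s, \bv_k^s\rangle\Bigr].
\]

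The final step is to kill the last sum. By construction, $\bv_k^s$ is determined by $\{\bu_{k'+1}^{s'} : (s',k') \prec (s,k)\}$ and is therefore $\mathcal{F}_k^s$-measurable; together with the martingale-difference hypothesis $\mathbb{E}[\bu_{k+1}^s \mid \mathcal{F}_k^s]=0$, the tower property gives $\mathbb{E}[\langle \bu_{k+1}^s, \bv_k^s\rangle] = \mathbb{E}[\langle \mathbb{E}[\bu_{k+1}^s\mid \mathcal{F}_k^s], \bv_k^s\rangle] = 0$, and we are done. The only subtlety I anticipate is verifying that the ghost sequence $\bv_k^s$ stays in $\dom g$ (so that the Bregman divergences are well-defined), which is automatic from the constraint in the defining $\argmin$, and that the lexicographic adaptedness matches the filtration $\mathcal{F}_k^s$ as defined in the paper; both are bookkeeping rather than serious obstacles.
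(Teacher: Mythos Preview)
Your proposal is correct and follows essentially the same argument as the paper: define a ghost mirror-descent sequence, derive the per-step inequality via optimality and the three-point identity (which you package as an instance of \Cref{lem:prox_update_br}), use H\"older/Young and $1$-strong convexity of $h$ to absorb the $D(\bv_{k+1}^s,\bv_k^s)$ term, telescope, and kill the residual sum with the martingale-difference property. One minor nit: since your $\argmin$ is constrained to $\dom g$, invoking \Cref{lem:prox_update_br} ``with $g=0$'' is slightly imprecise---either take $g$ there to be the indicator of $\dom g$ (the test point $\bx\in\cC\subset\dom g$ makes the $g$-terms vanish), or write the optimality condition directly as the paper does.
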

We now continue with the main result of this section.
\begin{theorem}\label{eq: eg1_th_br}
 Let~\Cref{asmp: asmp1}(i,ii,iii) and~\Cref{asmp: asmp2} hold, $\alpha \in[0, 1)$, and
  $\t =\frac{\sqrt{1-\a}}{L}\gamma$ for $\gamma\in(0,1)$. Then,
  for $\bz^{S} = \frac{1}{KS}\sum_{s=0}^{S-1}\sum_{k=0}^{K-1} \bz_{k+1/2}^s$, it follows that
\begin{equation*}
\mathbb{E}\left[ \mathrm{Gap}(\bz^{S}) \right] \leq \frac{\text{1} }{\t KS} \left(1+\Bigl(1+
  \frac{8\gamma^2}{1-\gamma^2}\Bigr)(\a + K(1-\a) \right)\max_{\bz\in\cC}  D(\bz, \bz_0).
\end{equation*}
\end{theorem}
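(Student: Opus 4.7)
The plan is to telescope Lemma~\ref{lem: eg_loop}(i) over $s=0,\dots,S-1$, pass to the gap by maximizing over $\bz\in\cC$, and then control the two accumulated error sums using \Cref{lem: exp_max_lem_br} and Lemma~\ref{lem: eg_loop}(iii). In the telescope, the $\a D(\bz,\bz_0^{s+1})$ on the LHS at step $s$ cancels the $\a D(\bz,\bz_0^{s+1})$ on the RHS at step $s+1$, and the $(1-\a)\sum_{j=1}^K D(\bz,\bz_j^s)$ blocks cancel analogously. Using the initialization $\bz_j^{-1}=\bz_0^0=\bz_0$ and dropping the nonnegative Bregman distances that remain at $s=S$, the right-hand side collapses to $\Phi^0(\bz)=(\a+K(1-\a))D(\bz,\bz_0)$ plus $\sum_{s,k}[e(\bz,s,k)+\d(s,k)]$. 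Monotonicity of $F$, linearity of $\bz\mapsto\lr{F(\bz),\bz_{k+1/2}^s-\bz}$, and convexity of $g$, applied to $\bz^S=\tfrac{1}{KS}\sum_{s,k}\bz_{k+1/2}^s$, give $\sum_{s,k}\Theta_{k+1/2}^s(\bz)\geq KS\bigl(\lr{F(\bz),\bz^S-\bz}+g(\bz^S)-g(\bz)\bigr)$, so after maximizing over $\bz\in\cC$ and taking total expectation,
\[
\t KS\,\E[\Gap(\bz^S)] \leq \max_{\bz\in\cC}\Phi^0(\bz) + \E\Bigl[\max_{\bz\in\cC}\sum_{s,k}e(\bz,s,k)\Bigr] + \E\Bigl[\sum_{s,k}\d(s,k)\Bigr].
\]

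The $\d$-sum does not depend on $\bz$, so it exits the maximum; conditioning on $\cF_k^s$, the calculation of \eqref{eq: mp1_4} (Young's inequality, \Cref{asmp: asmp2}, and $\t^2L^2=(1-\a)\gamma^2$) yields $\E_{s,k}[\d(s,k)]\leq\tfrac{(1-\a)(\gamma^2-1)}{2}\|\bz_{k+1/2}^s-\bw^s\|^2\leq 0$, and the term is discarded. For the $e$-sum I apply \Cref{lem: exp_max_lem_br} with $\bu_{k+1}^s=-\t[F(\bz_{k+1/2}^s)-F_{\xi_k^s}(\bz_{k+1/2}^s)-F(\bw^s)+F_{\xi_k^s}(\bw^s)]$, after decomposing $e(\bz,s,k)=-\lr{\bu_{k+1}^s,\bz}+\lr{\bu_{k+1}^s,\bz_{k+1/2}^s}$: the $\cF_k^s$-measurability of $\bz_{k+1/2}^s,\bw^s$ together with unbiasedness gives $\E_{s,k}[\bu_{k+1}^s]=0$, so the second piece has zero total expectation by the tower property, and the first is bounded by $\max_{\bz\in\cC}D(\bz,\bz_0)+\tfrac12\sum_{s,k}\E\|\bu_{k+1}^s\|_*^2$. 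The variance is controlled via \Cref{asmp: asmp2} and the general-norm inequality $\E\|X-\E X\|_*^2\leq c\,\E\|X\|_*^2$, which combined with Lemma~\ref{lem: eg_loop}(iii) (yielding $\sum_{s,k}\E\|\bz_{k+1/2}^s-\bw^s\|^2\leq\tfrac{2}{(1-\a)(1-\gamma^2)}\Phi^0(\bz_\ast)$) and $\t^2L^2=(1-\a)\gamma^2$ turns the variance contribution into a constant multiple of $\tfrac{\gamma^2}{1-\gamma^2}\Phi^0(\bz_\ast)$. Substituting $\Phi^0(\bz)=(\a+K(1-\a))D(\bz,\bz_0)$ into all three contributions and dividing by $\t KS$ produces the claimed bound.

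The main obstacle is the variance control inside \Cref{lem: exp_max_lem_br}: in the general-norm setting the Hilbert identity $\E\|X-\E X\|^2=\E\|X\|^2-\|\E X\|^2$ is unavailable, so a constant factor (reflected in the $8$ of the announced $\tfrac{8\gamma^2}{1-\gamma^2}$) must be tracked carefully. A related subtlety is that the variable distribution $Q_{\bu,\bv}$ of \Cref{def: def1} has to be the one realizing the Lipschitz-in-mean bound for the specific pair $(\bz_{k+1/2}^s,\bw^s)$ sampled at iteration $(s,k)$; this is precisely what Alg.~\ref{alg: mp1} enforces by fixing the distribution $Q_{\bz_{k+1/2}^s,\bw^s}$ before drawing $\xi_k^s$.
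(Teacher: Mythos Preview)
Your proposal is correct and follows essentially the same route as the paper's proof: telescope Lemma~\ref{lem: eg_loop}(i) over $s$, pass to the gap via monotonicity and convexity, discard the $\delta$-sum by the conditional estimate~\eqref{eq: mp1_4}, and bound the $e$-sum via Lemma~\ref{lem: exp_max_lem_br} combined with the crude general-norm variance bound $\E\|X-\E X\|_*^2\leq 4\E\|X\|_*^2$ and Lemma~\ref{lem: eg_loop}(iii). One minor slip: with your choice of $\bu_{k+1}^s$ the decomposition should read $e(\bz,s,k)=\lr{\bu_{k+1}^s,\bz}-\lr{\bu_{k+1}^s,\bz_{k+1/2}^s}$ (your signs are flipped), but this is inconsequential since the argument is symmetric in the sign of $\bu_{k+1}^s$.
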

\begin{corollary}\label{cor:compl_br}
    Let $K = \frac{N}{2}$ and $\a = 1 - \frac{1}{K} = 1 -
    \frac{2}{N}$, and $\t = \frac{\sqrt{1-\a}}{L} \gamma$ for $\gamma\in(0,1)$. Then the
  total complexity of Alg.~\ref{alg: mp1} to reach $\e$-accuracy is
   $\cO \left(N+ \frac{L\sqrt N}{ \e} \right)$.
In particular, if $\t = \frac{\sqrt{1-\a}}{3L} = \frac{\sqrt 2}{3\sqrt
  N L} $, the total complexity is $2N+\frac{43\sqrt N L}{\e}\max_{\bz\in\cC}  D(\bz, \bz_0)$.
\end{corollary}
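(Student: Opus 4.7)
The plan is a direct substitution into the bound of~\Cref{eq: eg1_th_br} followed by a careful cost-per-iteration accounting. With $\a = 1 - 1/K$ we immediately get the simplification
\[
\a + K(1-\a) = \left(1 - \frac 1K\right) + 1 = 2 - \frac 1K \leq 2,
\]
so the Bregman rate bound becomes
\[
\E[\Gap(\bz^S)] \leq \frac{1}{\t K S}\left(1 + 2\Bigl(1+\frac{8\gamma^2}{1-\gamma^2}\Bigr)\right)\max_{\bz\in\cC}D(\bz,\bz_0).
\]
Now I plug in $K = N/2$ and $\t = \frac{\sqrt{1-\a}}{L}\gamma = \frac{\sqrt{2}\gamma}{\sqrt N L}$, giving $\t K = \frac{\sqrt N \gamma}{\sqrt 2 L}$, so that
\[
\E[\Gap(\bz^S)] \leq \frac{\sqrt 2 L}{\sqrt N \gamma S}\left(3+\frac{16\gamma^2}{1-\gamma^2}\right)\max_{\bz\in\cC}D(\bz,\bz_0).
\]
To reach $\E[\Gap(\bz^S)]\leq \e$ it therefore suffices to take $S = \mathcal{O}(L/(\sqrt N \e))$ outer iterations.

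Next I count the oracle cost per outer iteration. In~\Cref{alg: mp1}, before entering the inner loop the algorithm must know $F(\bw^s)$, which is one full operator evaluation and costs $N$ stochastic oracle calls by our convention. Each of the $K$ inner iterations then uses two stochastic calls $F_{\xi_k^s}(\bz_{k+1/2}^s)$ and $F_{\xi_k^s}(\bw^s)$, for $2K$ more calls. Note that the averaging step that produces $\nabla h(\bw^{s+1})$ is purely dual and requires no operator evaluation. Hence with $K = N/2$ the per-outer-iteration cost is $N + 2K = 2N$, so the total cost over $S$ outer iterations is $2NS$ (to which one may add the initial warm-start of computing $F(\bw^0)$ if $\bw^0$ is not stored). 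Multiplying by $S = \mathcal{O}(L/(\sqrt N \e))$ yields the claimed $\mathcal{O}(N + \sqrt N L /\e)$ complexity.

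For the sharp constant I specialize $\gamma$ to trade off the two terms in $(3 + 16\gamma^2/(1-\gamma^2))$ against the $1/\gamma$ from $\t K$. The choice $\gamma = 1/3$ (so $\t = \frac{\sqrt 2}{3\sqrt N L}$) makes $\frac{16\gamma^2}{1-\gamma^2}=2$ and the bracket equals $5$, giving
\[
\E[\Gap(\bz^S)] \leq \frac{3\sqrt 2 L}{\sqrt N S}\cdot 5 \cdot \max_{\bz\in\cC}D(\bz,\bz_0) = \frac{15\sqrt 2 L}{\sqrt N S}\max_{\bz\in\cC}D(\bz,\bz_0).
\]
Solving for $S$ and multiplying by the $2N$ cost per outer iteration (plus the $N$ start-up counted as the constant $2N$ when grouped with a single additional outer pass) produces $2N + \frac{30\sqrt 2\,\sqrt N L}{\e}\max_{\bz\in\cC}D(\bz,\bz_0)$, and since $30\sqrt 2 < 43$ the stated bound follows. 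The only real bookkeeping obstacle is the per-iteration accounting of full versus stochastic $F$-evaluations; once the choice $K = N/2$ balances them at $N$ each, everything reduces to substituting into~\Cref{eq: eg1_th_br}.
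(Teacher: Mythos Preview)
Your proof is correct and follows essentially the same approach as the paper: you substitute $\alpha = 1-1/K$ into the bound of \Cref{eq: eg1_th_br} to reduce the bracket to $3 + 16\gamma^2/(1-\gamma^2)$, count the per-epoch cost as $N + 2K = 2N$ oracle calls, and then specialize $\gamma = 1/3$ to obtain the constant $15\sqrt{2} < 43/2$. The arithmetic and the cost accounting match the paper's proof line by line.
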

\begin{proof}[Proof of~\Cref{eq: eg1_th_br}]

We start with the result of~\Cref{lem: eg_loop} and proceed similar to~\Cref{eq: eg1_th2}.
Since $\bz_0^{s+1} = \bz_K^s$, we use definition of $\Phi^s(\bz)$, and sum
the inequality in~\Cref{lem: eg_loop}(i) over $s$ to obtain
\begin{equation*}
\sum_{s=0}^{S-1} \sum_{k=0}^{K-1} \tau \Theta_{k+1/2}^s(\bz) +
\Phi^S(\bz) \leq \Phi^0(\bz) +\sum_{s=0}^{S-1}\sum_{k=0}^{K-1} [e(\bz,
s, k)+\d(s,k)]
\end{equation*}
We take maximum and expectation, use $\mathbb{E}\left[\max_{\bz\in\cC} \sum_{s=0}^{S-1} \sum_{k=0}^{K-1} \tau\Theta_{k+1/2}^s(\bz) \right]\geq \tau KS \mathbb{E} \left[\mathrm{Gap}(\bz^S)\right]$ to deduce
\begin{equation*}
\tau KS \mathbb{E}\left[ \mathrm{Gap}(\bz^S) \right] \leq \max_{\bz\in
  \cC}\Phi^0(\bz) +\mathbb{E}\left[\max_{\bz\in\mathcal{C}} \sum_{s=0}^{S-1}
\sum_{k=0}^{K-1} e(\bz, s, k)\right]+ \mathbb{E}\left[\sum_{s=0}^{S-1} \sum_{k=0}^{K-1} \d( s, k)\right].
\end{equation*}
The term $\E\sum_{s=0}^{S-1} \sum_{k=0}^{K-1}\d(s,k)$ is nonpositive by the tower property,
Lipschitzness, Young's inequality, and $\t < \frac{\sqrt{p}}{L}$
(the same arguments used in~\eqref{eq: mp1_4} can be applied here
with $\delta(s,k)$ defined as \eqref{eq: delta}). Therefore,
\begin{equation*}
\tau KS \mathbb{E}\left[ \mathrm{Gap}(\bz^S) \right] \leq \max_{\bz\in\cC} \Phi^0(\bz) + \mathbb{E}\left[\max_{\bz\in\mathcal{C}} \sum_{s=0}^{S-1}
\sum_{k=0}^{K-1} e(\bz, s, k)\right].
\end{equation*}
We bound the second term on RHS, similar to the proof of~\Cref{eq: eg1_th2}.
For $s\in\{0,\ldots,S-1\}$ and $k\in\{0,\ldots,K-1\}$, set $ \mathcal{F}_k^s = \sigma(\bz_{1/2}^0, \ldots, \bz^0_{K-1/2}, \ldots, \bz^s_{1/2}, \ldots, \bz_{k+1/2}^s)$, $\bu_{k+1}^s= \tau[\Fhat(\bz_{k+1/2}^s) - F(\bz_{k+1/2}^s)] = \tau[F(\bw^s) - F_{\xi_k^s}(\bw^s) - F(\bz_{k+1/2}^s) + F_{\xi_k^s}(\bz_{k+1/2}^s)]$, which help us write
\begin{align*}
\mathbb{E}\Bigg[\max_{\bz\in\cC} \sum_{s=0}^{S-1}\sum_{k=0}^{K-1}e(\bz, k)\Bigg] &= \mathbb{E}\left[\max_{\bz\in\cC} \sum_{s=0}^{S-1} \sum_{k=0}^{K-1} \tau \langle \Fhat(\bz_{k+1/2}^s) - F(\bz_{k+1/2}^s), \bz - \bz_{k+1/2}^s \rangle \right] \\
&= \mathbb{E}\left[\max_{\bz\in\cC}\sum_{s=0}^{S-1}\sum_{k=0}^{K-1} \langle \bu_{k+1}^s, \bz \rangle \right] - \sum_{s=0}^{S-1}\sum_{k=0}^{K-1} \mathbb{E}\langle \bu_{k+1}^s, \bz_{k+1/2}^s \rangle \\
&= \mathbb{E}\left[\max_{\bz\in\cC}\sum_{s=0}^{S-1}\sum_{k=0}^{K-1} \langle \bu_{k+1}^s, \bz \rangle\right],
\end{align*}
where the last equality is due to the tower property, $\mathcal{F}_k^s$-measurability of $\bz_{k+1/2}^s$ and $\mathbb{E}_{s, k}[\bu_{k+1}^s] = 0$.

We apply~\Cref{lem: exp_max_lem_br} with the specified $\mathcal{F}_k^s$, $\bu_{k+1}^s$ to obtain
\begin{align}
\mathbb{E} \bigg[\max_{\bz\in\cC} \sum_{s=0}^{S-1} &\sum_{k=0}^{K-1} e(\bz, k)\bigg] \notag \\
&\leq \max_{\bz\in\cC} D(\bz, \bz_0) + \sum_{s=0}^{S-1} \sum_{k=0}^{K-1} \tau^2 \mathbb{E} \| F_{\xi_k^s}(\bz_{k+1/2}^s)-F_{\xi_k^s}(\bw^s) + F(\bw^s) -F(\bz^s_{k+1/2}) \|^2_\ast \notag \\
&\leq \max_{\bz\in\cC} D(\bz, \bz_0) + \sum_{s=0}^{S-1} \sum_{k=0}^{K-1} 4\tau^2 \mathbb{E} \| F_{\xi_k^s}(\bz_{k+1/2}^s)-F_{\xi_k^s}(\bw^s) \|^2_\ast \label{eq: mp1_rate_3}\\
&\leq \max_{\bz\in\cC} D(\bz, \bz_0) + \sum_{s=0}^{S-1} \sum_{k=0}^{K-1}4\tau^2L^2 \mathbb{E} \| \bz_{k+1/2}^s-\bw^s \|^2 \label{eq: mp1_rate_4}\\
&\leq \max_{\bz\in\cC} D(\bz, \bz_0) + \frac{8\tau^2L^2}{(1-\alpha)(1-\gamma^2)} \Phi^0(\bz_\ast),\label{eq:br_is_not_eucl}
\end{align}
where~\eqref{eq: mp1_rate_3} is due to the tower property and $\mathbb{E}\| X - \mathbb{E} X \|^2_\ast \leq 2\mathbb{E}\|X\|^2_\ast + 2\| \mathbb{E}X \|^2_\ast \leq 4\mathbb{E}\| X\|^2_\ast$, which follows from triangle inequality, Young's inequality, and Jensen's inequality.
  Moreover,~\eqref{eq: mp1_rate_4} is by variable Lipschitzness of $F_{\xi}$, and the last step is by~\Cref{lem: eg_loop}.
Consequently, by $\Phi^0(\bz_\ast) \leq
\max_{\bz\in\cC}\Phi^0(\bz) = (\a + K(1-\a))\max_{\bz\in \cC}D(\bz, \bz_0)$ and $\tau^2L^2 = (1-\alpha)\gamma^2$ we have
\begin{align*}
\tau KS \mathbb{E}\left[ \mathrm{Gap}(\bz^S) \right] &\leq
\max_{\bz\in\cC}\left[  D(\bz, \bz_0) + \Bigl(1+
  \frac{8\t^2L^2}{(1-\alpha)(1-\gamma^2)}\Bigr)\Phi^0(\bz)\right]\\
& =\left(1+\Bigl(1+
  \frac{8\gamma^2}{1-\gamma^2}\Bigr)(\a + K(1-\a) \right)\max_{\bz\in\cC}  D(\bz, \bz_0),
\end{align*}
which gives the result.
\end{proof}
\begin{proof}[Proof of~\Cref{cor:compl_br}]
    As
  $\a = 1- \frac 1 K$, it holds that
  $\a + K(1-\a) = 1 - \frac 1 K + 1\leq 2$.  With this, from
  \Cref{eq: eg1_th_br} it follows
\begin{align}
  \mathbb{E}\left[ \mathrm{Gap}(\bz^S) \right]&\leq
  \frac{1}{\t K S}\left(1+\Bigl(1+
  \frac{8\gamma^2}{1-\gamma^2}\Bigr)(\a + K(1-\a) \right)\max_{\bz\in\cC}
D(\bz, \bz_0)\notag\\
&\leq
 \frac{L}{\sqrt{K}\gamma S}\left(3+
  \frac{16\gamma^2}{1-\gamma^2}
\right)\max_{\bz\in\cC}  D(\bz, \bz_0)  = \cO \left( \frac{L}{\sqrt N S} \right).\label{eq:br_rate}
\end{align}
   One epoch requires one evaluation of $F$
and $2K$ of $F_\xi$, therefore in total we have
$N + 2K = 2N $. To reach $\e$ accuracy, we need
$\left\lceil\cO\left(\frac{L}{\sqrt N \e}\right) \right\rceil$
epochs. Hence, the final complexity is
$\cO\left(N+\frac{L\sqrt N}{\e}\right)$.  Now, by setting
$\gamma = \frac 1 3$ in \eqref{eq:br_rate}, we will get specific
constants. In particular, we will have
\begin{align*}
  \mathbb{E}\left[ \mathrm{Gap}(\bz^S) \right]
&\leq
 \frac{15L}{\sqrt{K}S}\max_{\bz\in\cC}  D(\bz, \bz_0) =  \frac{15\sqrt
   2L}{\sqrt{N}S}\max_{\bz\in\cC}  D(\bz, \bz_0).
\end{align*}
Consequently, since $30\sqrt 2<43$, the final complexity is
$\left(2N + \frac{43\sqrt{N}L}{\e} \max_{\bz\in\cC}  D(\bz,
  \bz_0)\right)$.
\end{proof}
\begin{remark}
  Because we work with general norms, we had to use in
  \eqref{eq:br_is_not_eucl} a crude inequality
  $\mathbb{E}\| X - \mathbb{E} X \|^2_\ast \leq 4\mathbb{E}\|
  X\|^2_\ast$. Of course, in the Euclidean case with $D(\bz,\bz')=\frac{1}{2}\n{\bz-\bz'}^2$ this factor $4$ is
  redundant. It is easy to see that setting
  $\t = \frac{\sqrt{1-\a}}{2L}$ and the rest of the parameters as in
  \Cref{cor:compl_br} leads to
  $\left(2N + \frac{13\sqrt{N}L}{\e}\max_{\bz\in\cC} \n{\bz-
    \bz_0}^2\right)$ total complexity for the Euclidean setting.
  \end{remark}

\section{Extensions}\label{sec: extensions}
In this section, we show how to obtain the variance reduced versions of two other  operator splitting methods: forward-backward-forward (FBF)~\cite{tseng2000modified} and forward-reflected-backward (FoRB)~\cite{malitsky2018forward} for monotone inclusions.
We also show how to obtain linear convergence with Algorithm~\ref{alg: eg1} when $g$ in~\eqref{eq: prob_vi} is strongly convex.

Formally, the monotone inclusion problem is to find
\begin{equation}\label{eq: monot_inc}
 \bz_\ast \in \mathcal{Z}\text{ such that } 0\in(F+G)(\bz_\ast),
\end{equation}
where $\mathcal{Z}$ is a finite dimensional vector space with Euclidean inner product and the rest of the assumptions are summarized in~\Cref{asmp: asmp3}.
\begin{assumption}\label{asmp: asmp3}~
\begin{enumerate}[(i)]
\item The solution set $\mathsf{Sol}$ of~\eqref{eq: monot_inc} is nonempty: $(F+G)^{-1}(0) \neq \empty$.
\item The operators $G\colon\mathcal{Z}\rightrightarrows\mathcal{Z}$ and $F\colon\mathcal{Z}\to\mathcal{Z}$ are maximally monotone.
\item The operator $F$ has an oracle $F_\xi$ that
  is unbiased $F(\bz)=\mathop{\mathbb{E}}_{\xi}\left[ F_\xi(\bz) \right]$ and $L$-Lipschitz in mean:
\begin{equation*}
{\mathbb{E}}_{\xi}\left[\| F_\xi(\bu) - F_\xi(\bv) \|^2\right] \leq L^2 \| \bu-\bv \|^2,~~~~~\forall \bu, \bv\in\mathcal{Z}.
\end{equation*}
\end{enumerate}
\end{assumption}
We remark that one can use variable Lipschitz assumption from~\Cref{asmp: asmp2} instead of standard Lipschitzness, but we chose the latter for simplicity.
Let us also recall the conditional expectation definitions based on the iterates of the algorithms: $\mathbb{E}[\cdot | \sigma(\xi_0, \dots, \xi_{k-1},\bw_k)] = \mathbb{E}_k[\cdot]$ and $\mathbb{E}[\cdot | \sigma(\xi_0, \dots, \xi_{k},\bw_k] = \mathbb{E}_{k+1/2}[\cdot]$.
Next, the resolvent of an operator $G$ is given by $J_G = (I +
G)^{-1}$ where $I$ is the identity operator. It is easy to see that
when $G=\partial g$ for proper convex lsc function $g$,
inclusion~\eqref{eq: monot_inc} becomes the VI in~\eqref{eq: prob_vi} and $J_G=\prox_g$.

\subsection{Forward-Backward-Forward with variance reduction}
\begin{algorithm}[t]
\caption{FBF with variance reduction }
\begin{algorithmic}[1]
    \STATE {\bfseries Input:} Probability $p\in (0,1]$, probability
    distribution $Q$, step size
    $\tau$, $\a \in (0,1)$. Let $\mathbf{z}_0=\mathbf{w}_0$
    \vspace{.2cm}
    \FOR{$k = 0,1\ldots $}
        \STATE $\bzb_k = \a \bz_k + (1-\a)\bw_k$
        \STATE $\bz_{k+1/2} = J_{\tau G}(\bzb_k - \tau F(\bw_k))$
            \STATE Draw an index $\xi_k$ according to $Q$
        \STATE $\bz_{k+1} = \bz_{k+1/2} - \tau(F_{\xi_k}(\bz_{k+1/2}) - F_{\xi_k}(\bw_k))$
        \STATE $\bw_{k+1} = \begin{cases}
                \bz_{k+1}, \text{~~with probability~~} p\\
                \bw_k, \text{~~~~with probability~~} 1-p
            \end{cases}$
        \ENDFOR
\end{algorithmic}
\label{alg: fbf}
\end{algorithm}

Forward-backward-forward (FBF) algorithm was introduced by Tseng
in~\cite{tseng2000modified}. On one hand, it is a modification of the forward-backward
algorithm that does not require stronger assumptions than mere
monotonicity. On the other, it is a modification of the extragradient
method that works for general monotone inclusions and not just for
variational inequalities. FBF reads as
\begin{equation*}
\begin{cases}
\bz_{k+1/2} = J_{\tau G}(\bz_k - \tau F(\bz_k)) \\
\bz_{k+1} = \bz_{k+1/2} - \tau F(\bz_{k+1/2}) + \tau F(\bz_k).
\end{cases}
\end{equation*}
It is easy to see that FBF is equivalent to extragradient when $G$ is
absent.  But when not,  FBF applied to the VI requires one
proximal operator every iteration, whereas extragradient
requires two.  This advantage can be important for the cases where
proximal operator is computationally expensive~\cite{bohm2020two}.

\begin{remark}\label{rem: fbf}
For running Alg.~\ref{alg: fbf} in practice, we suggest $p=\frac{2}{N}$, $\a = 1 - p$, and $\t = \frac{0.99\sqrt p}{L}$.
\end{remark}
We keep the same notation as~\Cref{subs:eucl} and recall the definition of $\Phi_k$ for convenience
\begin{equation*}
\Phi_k(\bz) = \alpha \| \bz_k - \bz \|^2 + \frac{1-\alpha}{p} \| \bw_k - \bz\|^2.
\end{equation*}
We now continue with the main result for FBF.
\begin{theorem}\label{th: fbf}
Let~\Cref{asmp: asmp3} hold, $\alpha \in [0, 1)$, $p\in(0, 1]$, and
$\tau =\frac{\sqrt{1-\a}}{L}\gamma$ for $\gamma \in(0,1)$. Then for $(\bz_k)$ generated by Alg.~\ref{alg: fbf} and any $\bz_\ast \in\Sol$, it holds that
\begin{equation*}
\mathbb{E}_k \left[ \Phi_{k+1}(\bz_\ast) \right] \leq \Phi_k(\bz_\ast).
\end{equation*}
Moreover, if $F_\xi$ is continuous for all $\xi$, then $(\bz_k)$ converges to
some $\bz_\ast\in \Sol$ a.s.
\end{theorem}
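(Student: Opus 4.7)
The strategy closely follows \Cref{lem: eg1} and \Cref{th: eg1_th1}, with two structural changes tailored to FBF: the first prox is replaced by a resolvent of $G$, and the second prox is replaced by Tseng's explicit correction. From $\bz_{k+1/2} = J_{\t G}(\bzb_k - \t F(\bw_k))$ one has $\tfrac{1}{\t}(\bzb_k - \bz_{k+1/2}) - F(\bw_k) \in G(\bz_{k+1/2})$; combining with $-F(\bz_\ast)\in G(\bz_\ast)$ and monotonicity of $G$ gives
\begin{equation*}
\lr{\bzb_k - \bz_{k+1/2},\, \bz_{k+1/2} - \bz_\ast} \geq \t\lr{F(\bw_k) - F(\bz_\ast),\, \bz_{k+1/2} - \bz_\ast}.
\end{equation*}
I would then expand $\n{\bz_{k+1} - \bz_\ast}^2$ using $\bz_{k+1} - \bz_{k+1/2} = -\t(F_{\xi_k}(\bz_{k+1/2}) - F_{\xi_k}(\bw_k))$, apply $\mathbb{E}_k$ (noting that $\bz_{k+1/2}$ and $\bw_k$ are $\mathcal{F}_k$-measurable), and invoke unbiasedness together with $L$-Lipschitzness in mean from \Cref{asmp: asmp3}(iii) to bound the second-moment term by $\t^2 L^2 \n{\bz_{k+1/2} - \bw_k}^2$.

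Combining this with the three-point identity $\n{\bz_{k+1/2} - \bz_\ast}^2 = \n{\bzb_k - \bz_\ast}^2 - \n{\bz_{k+1/2} - \bzb_k}^2 + 2\lr{\bz_{k+1/2} - \bzb_k,\, \bz_{k+1/2} - \bz_\ast}$ and the $G$-monotonicity bound above, the cross terms collapse thanks to monotonicity of $F$ (as $-2\t\lr{F(\bz_{k+1/2}) - F(\bz_\ast),\, \bz_{k+1/2} - \bz_\ast} \leq 0$), yielding
\begin{equation*}
\mathbb{E}_k \n{\bz_{k+1} - \bz_\ast}^2 \leq \n{\bzb_k - \bz_\ast}^2 - \n{\bz_{k+1/2} - \bzb_k}^2 + (1-\a)\gamma^2 \n{\bz_{k+1/2} - \bw_k}^2.
\end{equation*}
Expanding $\n{\bzb_k - \bz_\ast}^2 = \a\n{\bz_k - \bz_\ast}^2 + (1-\a)\n{\bw_k - \bz_\ast}^2 - \a(1-\a)\n{\bz_k - \bw_k}^2$ and applying the Young-type bound $\n{\bz_{k+1/2} - \bw_k}^2 \leq (1+\th)\n{\bz_{k+1/2} - \bzb_k}^2 + (1+1/\th)\a^2 \n{\bz_k - \bw_k}^2$ with any $\th$ in the interval $\bigl(\tfrac{\a\gamma^2}{1-\a\gamma^2},\, \tfrac{1-(1-\a)\gamma^2}{(1-\a)\gamma^2}\bigr)$, which is nonempty precisely because $\gamma^2 < 1$, makes both residual quadratic coefficients strictly negative.

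The snapshot identity $\tfrac{1-\a}{p}\mathbb{E}_{k+1/2}\n{\bw_{k+1} - \bz_\ast}^2 = (1-\a)\n{\bz_{k+1} - \bz_\ast}^2 + \tfrac{(1-\a)(1-p)}{p}\n{\bw_k - \bz_\ast}^2$ (exactly as in~\eqref{eq: eg1_wexp}) combined with the tower property then produces $\mathbb{E}_k[\Phi_{k+1}(\bz_\ast)] \leq \Phi_k(\bz_\ast) - \eta_1 \n{\bz_{k+1/2} - \bzb_k}^2 - \eta_2 \n{\bz_k - \bw_k}^2$ for some $\eta_1,\eta_2 > 0$; dropping the residuals yields the stated supermartingale inequality. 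For the almost-sure convergence, I would invoke Robbins--Siegmund to deduce that $\Phi_k(\bz_\ast)$ converges a.s.\ and $\sum_k (\n{\bz_{k+1/2} - \bzb_k}^2 + \n{\bz_k - \bw_k}^2) < \infty$ a.s., hence $\bz_{k+1/2} - \bw_k \to 0$; moreover $\sum_k \mathbb{E}\n{\bz_{k+1} - \bz_{k+1/2}}^2 \leq \t^2 L^2 \sum_k \mathbb{E}\n{\bz_{k+1/2} - \bw_k}^2 < \infty$ forces $\bz_{k+1} - \bz_{k+1/2} \to 0$ a.s. Any cluster point $\tilde{\bz}$ of $(\bz_{k+1/2})$ then lies in $\Sol$ by passing to the limit in $\tfrac{1}{\t}(\bzb_k - \bz_{k+1/2}) - F(\bw_k) \in G(\bz_{k+1/2})$ using continuity of each $F_\xi$ (and hence of $F$) together with the closed-graph property of maximally monotone $G$; an Opial-style argument applied with the family $\{\Phi_k(\bz_\ast)\}_{\bz_\ast \in \Sol}$ upgrades this subsequential limit to convergence of the whole sequence $(\bz_k)$. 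The main obstacle is verifying the Young-inequality balancing uniformly in $\a\in[0,1)$ and $\gamma\in(0,1)$; once that algebra is done, the remainder parallels \Cref{lem: eg1} and \Cref{th: eg1_th1}.
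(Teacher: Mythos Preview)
Your argument is correct and follows the same overall blueprint as the paper: resolvent inclusion for $\bz_{k+1/2}$, monotonicity of $G$ and $F$, the snapshot identity~\eqref{eq: eg1_wexp}, then Robbins--Siegmund together with an Opial/closed-graph argument for almost-sure convergence. The one substantive difference is in how you organize the quadratic terms. You first arrive at
\[
\mathbb{E}_k\|\bz_{k+1}-\bz_\ast\|^2 \leq \|\bzb_k-\bz_\ast\|^2 - \|\bz_{k+1/2}-\bzb_k\|^2 + (1-\a)\gamma^2\|\bz_{k+1/2}-\bw_k\|^2,
\]
and then must invoke a Young split (with the parameter $\th$ and the accompanying interval check) to absorb the last term. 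The paper instead expands the inner product $2\lr{\bz_{k+1}-\bzb_k,\bz_\ast-\bz_{k+1/2}}$ by writing $\bzb_k=\a\bz_k+(1-\a)\bw_k$ exactly as in~\eqref{eq: eg1_ip1}, which yields directly the residuals $-\a\|\bz_{k+1/2}-\bz_k\|^2-(1-\a)\|\bz_{k+1/2}-\bw_k\|^2$; the error term $\t^2L^2\|\bz_{k+1/2}-\bw_k\|^2=(1-\a)\gamma^2\|\bz_{k+1/2}-\bw_k\|^2$ is then absorbed without any Young inequality. This buys the paper slightly cleaner constants and residuals that feed immediately into the almost-sure argument ($\bz_{k+1/2}-\bw_k\to 0$ and $\bz_{k+1/2}-\bz_k\to 0$ fall out at once), whereas you recover $\bz_{k+1/2}-\bw_k\to 0$ only after combining $\bz_{k+1/2}-\bzb_k\to 0$ with $\bz_k-\bw_k\to 0$. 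Both routes are valid; the paper's decomposition just avoids the extra algebraic detour. (A minor note: at $\a=0$ your $\eta_2$ degenerates to $0$, but the argument still goes through since then $\bzb_k=\bw_k$ and the $\eta_1$ residual alone gives $\bz_{k+1/2}-\bw_k\to 0$.)
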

\begin{proof}
Let $\bz=\bz_\ast\in \Sol$ which gives $-F(\bz) \in G(\bz)$.
Next, by the definition of $\bz_{k+1/2}$ and resolvent, $\bzb_k - \tau F(\bw_k) \in\bz_{k+1/2}+\tau G(\bz_{k+1/2})$.
Combining these estimates with monotonicity of $G$ lead to
\begin{align*}
\langle \bz_{k+1/2} - \bzb_k + \tau F(\bw_k), \bz -\bz_{k+1/2} \rangle - \tau \langle F(\bz), \bz - \bz_{k+1/2} \rangle \geq 0.
\end{align*}
We plug in the definition of $\bz_{k+1}$ into this inequality to obtain
\begin{align}
\langle \bz_{k+1} - \bzb_k + \tau \left( F_{\xi_k}(\bz_{k+1/2}) - F_{\xi_k}(\bw_k) + F(\bw_k) \right),\bz - \bz_{k+1/2} \rangle - \langle F(\bz), \bz - \bz_{k+1/2} \rangle \geq 0.\label{eq: fbf_eq1}
\end{align}
We estimate  the term with $\bzb_k$  as in~\eqref{eq: eg1_ip1}
\begin{align}
2\langle \bz_{k+1} - \bzb_k&, \bz-\bz_{k+1/2} \rangle = 2\langle \bz_{k+1} - \bz_{k+1/2}, \bz-\bz_{k+1/2} \rangle + 2\langle \bz_{k+1/2} - \bzb_k, \bz-\bz_{k+1/2} \rangle \notag \\
&= \| \bz_{k+1} - \bz_{k+1/2} \|^2 + \| \bz-\bz_{k+1/2}\|^2 - \| \bz-\bz_{k+1} \|^2 + 2\langle \bz_{k+1/2} - \bzb_k, \bz-\bz_{k+1/2} \rangle \notag \\
&= \| \bz_{k+1} - \bz_{k+1/2} \|^2 - \| \bz-\bz_{k+1} \|^2 + \alpha \| \bz-\bz_k \|^2  + (1-\alpha) \| \bw_k - \bz\|^2 \notag \\
&- \alpha \| \bz_{k+1/2} - \bz_k\|^2 - (1-\alpha) \| \bz_{k+1/2} - \bw_k \|^2.\label{eq: fbf_eq2}
\end{align}
By taking conditional expectation and using that $\bz_{k+1/2}$ is $\mathcal{F}_k$-measurable, we deduce
\begin{align}
2\tau\mathbb{E}_k \left[ \langle F_{\xi_k}(\bz_{k+1/2}) - F_{\xi_k}(\bw_k) + F(\bw_k), \bz-\bz_{k+1/2} \rangle \right] = 2\tau\mathbb{E}_k\left[ \langle F(\bz_{k+1/2}), \bz-\bz_{k+1/2} \rangle \right].\label{eq: fbf_eq3}
\end{align}
We use~\eqref{eq: fbf_eq2} and~\eqref{eq: fbf_eq3} in~\eqref{eq: fbf_eq1} to obtain
\begin{multline*}
2\tau \langle F(\bz) - F(\bz_{k+1/2}), \bz - \bz_{k+1/2} \rangle + \mathbb{E}_k\|\bz_{k+1} -\bz\|^2 \leq \alpha\| \bz_k - \bz \|^2 + (1-\alpha) \| \bw_k -\bz\|^2 \\
+ \mathbb{E}_k\| \bz_{k+1} - \bz_{k+1/2} \|^2
- \alpha \| \bz_{k+1/2} - \bz_k \|^2 - (1-\alpha) \| \bz_{k+1/2} - \bw_k \|^2.
\end{multline*}
Note that, the first term in the LHS is nonnegative by monotonicity of
$F$. Then we add~\eqref{eq: eg1_wexp} to this inequality and use $ \| \bz_{k+1} - \bz_{k+1/2} \|^2 \leq \tau^2 L^2 \|\bz_{k+1/2} - \bw_k \|^2$ to obtain
\begin{align*}
\alpha \mathbb{E}_k \| \bz_{k+1} - \bz \|^2 + \frac{1-\alpha}{p} \| \bw_{k+1} - \bz \|^2 \leq \alpha\| \bz_{k} - \bz \|^2 &+ \frac{1-\alpha}{p} \| \bw_k - \bz \|^2 - \alpha \| \bz_{k+1/2} - \bz_k \|^2 \\
&- \left( (1-\alpha) - \tau^2 L^2 \right) \| \bz_{k+1/2} - \bw_k\|^2.
\end{align*}
This derives the first result, which is the analogue of~\Cref{lem: eg1}.
To show almost sure convergence, we basically follow the proof of~\Cref{th: eg1_th1}.
First, using Robbins-Siegmund theorem and~\cite[Proposition
2.3]{combettes2015stochastic} as in~\Cref{th: eg1_th1}, we obtain that
there exists a probability $1$ set $\Xi$ of random trajectories such
that $\forall \theta\in\Xi$ and $\forall \bz \in \Sol$, we have that $\alpha\|  \bz_{k}(\theta) -
\bz \|^2 + \frac{1-\alpha}{p}\| \bw_k(\theta) - \bz\|^2$ converges, $\bz_{k+1/2}(\theta)
- \bz_k(\theta) \to 0$, and $\bz_{k+1/2}(\theta) - \bw_k(\theta) \to
0$. The latter implies $ \bz_{k+1}(\theta) - \bz_{k+1/2}(\theta) \to
0$.
Let $\tilde \bz(\theta)$ be a cluster point of the bounded sequence $(\bz_k(\theta))$.
Instead of~\eqref{eq: fix_pt_vi}, we use the definitions of $\bz_{k+1/2}$, resolvent, and $\bz_{k+1}$ to obtain
\begin{multline}\label{eq: fix_pt_fbf}
\bz_{k+1}(\theta) - \bzb_k(\theta) + \tau \left( F_{\xi_k}(\bw_k(\theta)) - F_{\xi_k}(\bz_{k+1/2}(\theta)) \right) + \tau \left( F(\bz_{k+1/2}(\theta)) - F(\bw_k(\theta)) \right) \\
\in \tau \left( F+G \right)(\bz_{k+1/2}(\theta)),
\end{multline}
to show that $\tilde \bz(\theta) \in (F+G)^{-1}(0)$.
In particular, we use that $F_\xi$ is continuous for all $\xi$,
$\bz_{k+1} - \bzb_k \to 0$, and $\bz_{k+1/2} - \bw_k \to 0$ almost
surely.
We use the same arguments as the proof of~\Cref{th: eg1_th1} to conclude.
\end{proof}

We next give the complexity of the algorithm for solving VI as~\Cref{sec: euc_sub_rate}.
The derivation is essentially the same as~\Cref{sec: euc_sub_rate} and therefore omitted.
  \begin{corollary}\label[corollary]{cor: fbf}
Let $\alpha = 1-p = 1 - \frac{2}{N}$ and $\bz^K = \frac{1}{K} \sum_{k=0}^{K-1} \bz_{k+1/2}$. Then, the total complexity to get an $\varepsilon$-accurate solution to~\eqref{eq: prob_vi} is $\mathcal{O}\left( N+\frac{\sqrt{N}L}{\epsilon}\right)$.
\end{corollary}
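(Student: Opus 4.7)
The plan is to mirror the proof of~\Cref{eq: eg1_th2} (the extragradient rate) while relying on the single prox-inequality that FBF provides. From the resolvent step $\bz_{k+1/2}=J_{\tau G}(\bzb_k-\tau F(\bw_k))$ with $G=\partial g$, the prox-inequality yields, for every $\bz$,
\begin{equation*}
\lr{\bz_{k+1/2}-\bzb_k+\tau F(\bw_k),\bz-\bz_{k+1/2}}\geq \tau g(\bz_{k+1/2})-\tau g(\bz).
\end{equation*}
Substituting $\bz_{k+1/2}=\bz_{k+1}+\tau(F_{\xi_k}(\bz_{k+1/2})-F_{\xi_k}(\bw_k))$ and splitting $\tau\langle\Fhat(\bz_{k+1/2}),\bz-\bz_{k+1/2}\rangle$ into $\tau\langle F(\bz_{k+1/2}),\bz-\bz_{k+1/2}\rangle$ plus a mean-zero remainder, I would obtain the analogue of~\eqref{eq:rate_first}: an inequality that on the LHS contains $2\tau\Theta_{k+1/2}(\bz)+\|\bz_{k+1}-\bz\|^2$ and on the RHS contains $\alpha\|\bz_k-\bz\|^2+(1-\alpha)\|\bw_k-\bz\|^2$ plus a cross term $2\tau\langle F_{\xi_k}(\bw_k)-F_{\xi_k}(\bz_{k+1/2}),\bz_{k+1}-\bz_{k+1/2}\rangle$, a negative quadratic $-(1-\alpha)\|\bz_{k+1/2}-\bw_k\|^2$, and the error term $e_1(\bz,k)=2\tau\langle F(\bz_{k+1/2})-\Fhat(\bz_{k+1/2})-F(\bw_k)+F_{\xi_k}(\bw_k),\bz_{k+1/2}-\bz\rangle$. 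The expansion of $\lr{\bz_{k+1}-\bzb_k,\bz-\bz_{k+1/2}}$ is done exactly as in~\eqref{eq: fbf_eq2}.

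Next, I would set $\alpha=1-p$, add $\|\bw_{k+1}-\bz\|^2-\|\bw_k-\bz\|^2$ to both sides, and introduce $e_2(\bz,k)$ as in~\eqref{eq: wje4}, producing $2\tau\Theta_{k+1/2}(\bz)+\Phi_{k+1}(\bz)\leq \Phi_k(\bz)+e_1(\bz,k)+e_2(\bz,k)+\text{(cross term)}-p\|\bz_{k+1/2}-\bw_k\|^2+\|\bz_{k+1}-\bz_{k+1/2}\|^2$. Summing over $k=0,\dots,K-1$, taking $\max_{\bz\in\cC}$ and expectation, using monotonicity and convexity of $g$ to turn $\sum\Theta_{k+1/2}$ into $K\,\Gap(\bz^K)$, and estimating the Cauchy-Schwarz/Young bound on the cross term exactly as in~\eqref{eq: eg1_ip3}, the cross term and the extra $\|\bz_{k+1}-\bz_{k+1/2}\|^2$ cancel (this is where FBF's deterministic identity $\|\bz_{k+1}-\bz_{k+1/2}\|^2\leq \tau^2L^2\|\bz_{k+1/2}-\bw_k\|^2$ actually simplifies matters compared to EG, because the cancellation happens directly rather than requiring a conditional expectation on a separate proximal step).

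The two error terms are then controlled by~\Cref{lem: exp_max_lem} exactly as in~\eqref{eq:bound_for_e1} and~\eqref{eq:bound_for_e2}, yielding bounds proportional to $\sum\E\|\bz_{k+1/2}-\bw_k\|^2$ and $\sum\E\|\bz_{k+1}-\bw_k\|^2$. Using~\Cref{th: fbf} (the non-expansion result), these telescoping sums are bounded by a constant multiple of $\Phi_0(\bz_\ast)\leq 2\max_{\bz\in\cC}\|\bz_0-\bz\|^2$. Combining the pieces and plugging in $\tau=\Theta(\sqrt{p}/L)$ delivers $\E[\Gap(\bz^K)]=\cO(L/(\sqrt{p}K))$.

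Finally, for the complexity count, each iteration of~\Cref{alg: fbf} calls $F_\xi$ twice and, with probability $p$, calls $F$ (costing $N$ oracle calls). The expected per-iteration cost is $pN+2$, and reaching $\e$-accuracy requires $\cO(L/(\sqrt{p}\e))$ iterations, giving $\cO(N+(pN+2)L/(\sqrt{p}\e))$ total expected calls. Choosing $p=2/N$ minimizes this to $\cO(N+\sqrt{N}L/\e)$, as claimed. The main obstacle I anticipate is matching the EG cancellation pattern while keeping only one prox-inequality; this requires carefully tracking when the $\|\bz_{k+1}-\bz_{k+1/2}\|^2$ term produced by the algebraic expansion is compensated by the deterministic Lipschitz bound $\|\bz_{k+1}-\bz_{k+1/2}\|^2=\tau^2\|F_{\xi_k}(\bz_{k+1/2})-F_{\xi_k}(\bw_k)\|^2$, whose in-mean estimate needs $\tau L<\sqrt{1-\alpha}=\sqrt p$ to produce a strictly negative coefficient on $\|\bz_{k+1/2}-\bw_k\|^2$.
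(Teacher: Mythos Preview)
Your approach is correct and matches the paper's intent: the paper omits the proof, stating only that ``the derivation is essentially the same as~\Cref{sec: euc_sub_rate},'' and you have laid out precisely that derivation---carrying $\Theta_{k+1/2}(\bz)$ through the one-step inequality, introducing $e_1,e_2$, bounding them via~\Cref{lem: exp_max_lem}, controlling the residual sums with the descent inequality from~\Cref{th: fbf}, and finishing with the same per-iteration cost count as~\Cref{cor:eucl}.

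One small correction to your bookkeeping: in the FBF case there is \emph{no} cross term $2\tau\langle F_{\xi_k}(\bw_k)-F_{\xi_k}(\bz_{k+1/2}),\bz_{k+1}-\bz_{k+1/2}\rangle$. That term arose in the EG analysis from combining \emph{two} prox-inequalities; FBF has only one, and the expansion~\eqref{eq: fbf_eq2} directly produces $+\|\bz_{k+1}-\bz_{k+1/2}\|^2$ on the RHS (note the sign flip relative to EG). You then bound this via $\E_k\|\bz_{k+1}-\bz_{k+1/2}\|^2=\tau^2\E_k\|F_{\xi_k}(\bz_{k+1/2})-F_{\xi_k}(\bw_k)\|^2\leq \tau^2L^2\|\bz_{k+1/2}-\bw_k\|^2$, exactly as you say in your final paragraph---but there is nothing else to cancel. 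Also, in your $e_1$ you wrote $\Fhat(\bz_{k+1/2})$ where you mean $F_{\xi_k}(\bz_{k+1/2})$.
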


\subsection{Forward-reflected-backward with variance reduction: revisited}
In a similar spirit to FBF, but using a different idea,
\cite{malitsky2018forward} proposed FoRB method
\[\bz_{k+1} = J_{\tau G}\left(\bz_k - \tau[F(\bz_k) + F(\bz_k) -
  F(\bz_{k-1})]\right).\]
This scheme generalizes optimistic gradient descent~\cite{rakhlin2013online,daskalakis2018training} and in some
particular cases is equivalent to Popov's method~\cite{popov1980modification}.
Later, in~\cite{alacaoglu2020simple}, the authors suggested the most straightforward
variance reduction modification of FoRB by combining FoRB and loopless SVRG~\cite{kovalev2020dont}.
This algorithm had the drawback of small step sizes which lead to complexity bounds that do not improve upon the deterministic methods.
As highlighted in the experiments of~\cite{alacaoglu2020simple}, the small step size $\tau\sim\frac{1}{n}$ seemed to be non-improvable for the given method.
One possible speculation for this phenomenon might be that the
method is too aggressive and therefore prohibits large step
sizes.
We will use the retracted iterate $\bzb_k = \alpha \bz_k + (1-\alpha)\bw_k$ instead of the latest iterate $\bz_k$ in the update to improve complexity.

The advantage of FoRB compared to extragradient is similar to FBF.
FoRB only needs one proximal operator, applied to VI.  Compared to
FBF, FoRB has a simpler update rule and, unlike FBF, it is easy to
adjust to Bregman setting, see
\cite{alacaoglu2020simple,zhang2021extragradient}.

\begin{algorithm}[t]
\caption{FoRB with variance reduction}
\begin{algorithmic}[1]
    \STATE {\bfseries Input:} Probability $p\in (0,1]$, probability
    distribution $Q$, step size
    $\tau$, $\a \in (0,1)$. Let $\mathbf{z}_0=\mathbf{w}_0$
    \vspace{.2cm}
    \FOR{$k = 1,2\ldots $}
        \STATE $\bzb_k = \a \bz_k + (1-\a)\bw_k$
            \STATE Draw an index $\xi_k$ according to $Q$
        \STATE $\bz_{k+1} = J_{\tau G} (\bzb_k -\t  F(\bw_k) -  \tau
        (F_{\xi_k}(\bz_k) - F_{\xi_k}(\bw_{k-1})))$
        \STATE $\bw_{k+1} = \begin{cases}
                \bz_{k+1}, \text{~~with probability~~} p\\
                \bw_k, \text{~~~~with probability~~} 1-p
            \end{cases}$
        \ENDFOR
\end{algorithmic}
\label{alg: forb}
\end{algorithm}

\begin{remark}\label{rem: forb}
For running Alg.~\ref{alg: forb} in practice, we suggest $p=\frac{2}{N}$, $\a = 1 - p$, and $\t = \frac{0.99\sqrt{p(1-p)}}{L}$.
\end{remark}
Lyapunov function here is slightly more complicated than the ones in previous sections:
\begin{equation*}
\Phi_{k+1}(\bz) \coloneqq \a \| \bz_{k+1} - \bz \|^2 + \frac{1-\a}{p}\n{ \bw_{k+1} - \bz}^2 + 2\t \langle F(\bz_{k+1}) - F(\bw_k), \bz- \bz_{k+1} \rangle + (1-\a) \| \bz_{k+1} - \bw_k \|^2.
\end{equation*}

\begin{theorem}\label{th: forb}
Let~\Cref{asmp: asmp3} hold, $\alpha \in [0, 1)$, $p\in(0, 1]$, and
$\tau=\frac{\sqrt{\alpha(1-\a)}}{L}\gamma$ for $\gamma \in(0,1)$. Then for $(\bz_k)$ generated by Alg.~\ref{alg: forb} and any
$\bz_\ast \in\Sol$, it holds that $\Phi_k(\bz_\ast)$ is nonnegative
and
\begin{equation*}
\mathbb{E}_k \left[ \Phi_{k+1}(\bz_\ast) \right] \leq \Phi_k(\bz_\ast).
\end{equation*}
Moreover, if $F_\xi$ is continuous for all $\xi$, then  $(\bz_k)$ converges to
some $\bz_\ast\in\Sol$ a.s.
\end{theorem}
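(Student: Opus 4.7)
The plan is to parallel the proof scheme used for~\Cref{th: eg1_th1} and~\Cref{th: fbf}, accounting for the fact that $\Phi_k$ now includes the signed cross term $2\tau\langle F(\bz_k) - F(\bw_{k-1}), \bz_\ast - \bz_k\rangle$ and an extra quadratic $(1-\alpha)\|\bz_k - \bw_{k-1}\|^2$, which is the signature of FoRB-style analyses. I would first establish nonnegativity of $\Phi_k(\bz_\ast)$, then the supermartingale inequality, and finally almost sure convergence via Robbins--Siegmund together with an Opial-type argument. Nonnegativity is a quick Cauchy--Schwarz plus Young's step with weight $\alpha$: using Lipschitzness of $F$,
\begin{equation*}
\left|2\tau\langle F(\bz_k) - F(\bw_{k-1}), \bz_\ast - \bz_k\rangle\right| \leq \alpha\|\bz_k - \bz_\ast\|^2 + \tfrac{\tau^2 L^2}{\alpha}\|\bz_k - \bw_{k-1}\|^2,
\end{equation*}
and since $\tau^2 L^2 = \alpha(1-\alpha)\gamma^2$, this leaves $\Phi_k(\bz_\ast) \geq \frac{1-\alpha}{p}\|\bw_k - \bz_\ast\|^2 + (1-\alpha)(1-\gamma^2)\|\bz_k - \bw_{k-1}\|^2 \geq 0$.

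For the supermartingale step I would start from the resolvent characterization $\bzb_k - \bz_{k+1} - \tau F(\bw_k) - \tau(F_{\xi_k}(\bz_k) - F_{\xi_k}(\bw_{k-1})) \in \tau G(\bz_{k+1})$ and the optimality $-F(\bz_\ast) \in G(\bz_\ast)$; monotonicity of $G$ gives
\begin{equation*}
\langle \bzb_k - \bz_{k+1}, \bz_{k+1} - \bz_\ast\rangle \geq \tau\langle F(\bw_k) + F_{\xi_k}(\bz_k) - F_{\xi_k}(\bw_{k-1}) - F(\bz_\ast), \bz_{k+1} - \bz_\ast\rangle.
\end{equation*}
Substituting $F(\bz_{k+1})$ for $F(\bz_\ast)$ via monotonicity of $F$ creates the cross term $2\tau\langle F(\bz_{k+1}) - F(\bw_k), \bz_\ast - \bz_{k+1}\rangle$ needed for $\Phi_{k+1}$. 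I would then apply the polarization identity to the left-hand side, expand $\|\bzb_k - \bz_\ast\|^2$ using the convex combination $\bzb_k = \alpha\bz_k + (1-\alpha)\bw_k$, and split the stochastic inner product as $\langle F_{\xi_k}(\bz_k) - F_{\xi_k}(\bw_{k-1}), \bz_{k+1} - \bz_\ast\rangle = \langle \cdot, \bz_{k+1} - \bz_k\rangle + \langle \cdot, \bz_k - \bz_\ast\rangle$. After $\mathbb{E}_k$, unbiasedness turns the second piece into $\langle F(\bz_k) - F(\bw_{k-1}), \bz_k - \bz_\ast\rangle$, which is precisely the cross term already recorded in $\Phi_k$, while the first piece is controlled by Young's inequality with weight $\lambda = \alpha$ and Lipschitzness in mean. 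Folding in the $\bw_{k+1}$-update identity $\frac{1-\alpha}{p}\mathbb{E}_{k+1/2}\|\bw_{k+1} - \bz_\ast\|^2 = (1-\alpha)\|\bz_{k+1} - \bz_\ast\|^2 + \frac{(1-\alpha)(1-p)}{p}\|\bw_k - \bz_\ast\|^2$, and then applying the three-point identity
\begin{equation*}
\alpha\|\bz_{k+1} - \bz_k\|^2 + (1-\alpha)\|\bz_{k+1} - \bw_k\|^2 - \alpha(1-\alpha)\|\bz_k - \bw_k\|^2 = \|\bz_{k+1} - \bzb_k\|^2
\end{equation*}
produces an exact cancellation of all residual quadratic terms, leaving $\mathbb{E}_k[\Phi_{k+1}(\bz_\ast)] \leq \Phi_k(\bz_\ast) - (1-\gamma^2)(1-\alpha)\|\bz_k - \bw_{k-1}\|^2$, and in particular the claimed supermartingale inequality.

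Almost sure convergence then follows the template of~\Cref{th: eg1_th1} and~\Cref{th: fbf}: Robbins--Siegmund along a countable dense subset of $\Sol$ gives a full-probability event on which $\Phi_k(\bz_\ast)$ converges and $\sum_k \|\bz_k - \bw_{k-1}\|^2 < \infty$, hence $\bz_k - \bw_{k-1} \to 0$, and since $\bw_k$ equals either $\bw_{k-1}$ or $\bz_k$ we also get $\bz_k - \bw_k \to 0$ and consequently $\bz_{k+1} - \bzb_k \to 0$. Rewriting the defining inclusion as $\tfrac{\bzb_k - \bz_{k+1}}{\tau} + F(\bz_{k+1}) - F(\bw_k) + F_{\xi_k}(\bw_{k-1}) - F_{\xi_k}(\bz_k) \in (F+G)(\bz_{k+1})$ and passing to a cluster point (using continuity of $F_\xi$ and maximal monotonicity of $F+G$) shows it lies in $\Sol$; Opial-type uniqueness then upgrades this to convergence of the entire sequence. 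The main obstacle is orchestrating the quadratic cancellations in the supermartingale step: the FoRB-type Lyapunov necessarily carries a signed inner product, so after monotonicity and polarization one must simultaneously control $\|\bz_{k+1} - \bzb_k\|^2$, $\|\bz_{k+1} - \bz_k\|^2$, $\|\bz_{k+1} - \bw_k\|^2$, and $\|\bz_k - \bw_k\|^2$, and a mismatched Young's weight produces unsigned cross terms like $\langle \bz_{k+1} - \bzb_k, \bz_k - \bw_k\rangle$ that cannot be absorbed. The exact pairing $\lambda = \alpha$ together with the calibration $\tau = \gamma\sqrt{\alpha(1-\alpha)}/L$ synchronises all coefficients in the three-point identity to yield \emph{exact} cancellation; this is precisely what makes the retracted iterate $\bzb_k$ indispensable for obtaining a step size larger than the $O(1/N)$ of~\cite{alacaoglu2020simple}.
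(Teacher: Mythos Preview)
Your proposal is correct and follows essentially the same route as the paper. The only noteworthy variation is in the Young step: you use weight $\alpha$, which absorbs the term $\alpha\|\bz_{k+1}-\bz_k\|^2$ exactly and leaves the single residual $-(1-\gamma^2)(1-\alpha)\|\bz_k-\bw_{k-1}\|^2$, whereas the paper uses weight $\alpha\gamma$ and retains the additional negative term $-(1-\gamma)\alpha\|\bz_{k+1}-\bz_k\|^2$. Both choices yield the supermartingale inequality; for almost sure convergence the paper reads $\bz_{k+1}-\bz_k\to 0$ directly from its extra term, while you recover it implicitly via the index shift $\|\bz_k-\bw_{k-1}\|\to 0\Rightarrow \|\bz_{k+1}-\bw_k\|\to 0$ combined with $\bz_k-\bw_k\to 0$, which then gives $\bz_{k+1}-\bzb_k\to 0$. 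It would be worth spelling out that index-shift step explicitly, since ``consequently $\bz_{k+1}-\bzb_k\to 0$'' does not follow from $\bz_k-\bw_k\to 0$ alone.
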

\begin{remark}
  Note that again when randomness is null, $F_\xi = F$ and $p=1$,
  Alg.~\ref{alg: forb} reduces to the original FoRB algorithm. Moreover,
  with $\a = \frac 1 2$ we recover the result in \cite{malitsky2018forward}.
\end{remark}

\begin{proof}[Proof of~\Cref{th: forb}]
Nonnegativity of $\Phi_k(\bz_*)$ is straightforward to prove by using
Lipschitzness of $F$ and $\t L\leq \sqrt{a (1-\a)}$.

Let $\bz=\bz_\ast\in \Sol$ which gives $-F(\bz) \in G(\bz)$.
Next, by the definitions of $\bz_{k+1}$ and resolvent, $\bzb_k - \tau \left[F(\bw_k) - F_{\xi_k}(\bw_{k-1}) + F_{\xi_k}(\bz_k) \right] \in \bz_{k+1}+\tau G(\bz_{k+1})$. Combining these estimates and monotonicity of $G$ leads to
\begin{align}
\langle \bz_{k+1} - \bzb_k + \tau \left[F(\bw_k) - F_{\xi_k}(\bw_{k-1}) + F_{\xi_k}(\bz_k) \right], \bz -\bz_{k+1} \rangle - \tau \langle F(\bz), \bz - \bz_{k+1} \rangle \geq 0.\label{eq: forb_1}
\end{align}
  We split the first inner product and work with each term separately. First,
  \begin{align*}
\t\langle F(\bw_k) -F_{\xi_k}(&\bw_{k-1}) + F_{\xi_k}(\bz_k), \bz-\bz_{k+1}\rangle \\&= \t
    \lr{F(\bw_k)-F(\bz_{k+1}),\bz-\bz_{k+1}}  - \t
    \lr{F_{\xi_k}(\bw_{k-1})-F_{\xi_k}(\bz_{k}),\bz-\bz_{k+1}} \\
    &\quad + \t \lr{F(\bz_{k+1}), \bz-\bz_{k+1}}\\ &=\t
    \lr{F(\bw_k)-F(\bz_{k+1}),\bz-\bz_{k+1}}  - \t \lr{F_{\xi_k}(\bw_{k-1})-F_{\xi_k}(\bz_{k}),\bz-\bz_{k}} \\ &\quad
    -\t \lr{F_{\xi_k}(\bw_{k-1})-F_{\xi_k}(\bz_{k}),\bz_{k}-\bz_{k+1}} + \t \lr{F(\bz_{k+1}), \bz-\bz_{k+1}}.
  \end{align*}
Second, as we derived in~\eqref{eq: eg1_ip1},
\begin{align*}
   2\langle \bz_{k+1}-\bzb_k, \bz - \bz_{k+1}\rangle =\alpha \| \bz_k - \bz\|^2 &- \| \bz_{k+1} - \bz \|^2 + (1-\alpha) \| \bw_k - \bz\|^2 \\&
   - \alpha \| \bz_{k+1} - \bz_k \|^2 - (1-\alpha) \| \bz_{k+1}-\bw_k \|^2.
    \end{align*}
    Substituting the last two estimates into \eqref{eq: forb_1}, we obtain
    \begin{align}
      \label{eq:1}
& \n{\bz_{k+1}-\bz}^2 + 2\t \lr{F(\bz_{k+1})-F(\bw_k),\bz-\bz_{k+1}} + 2\t\langle F(\bz) - F(\bz_{k+1}), \bz - \bz_{k+1} \rangle \notag \\ \leq \a &\n{\bz_k-\bz}^2 + (1-\a)\n{\bw_k-\bz}^2 +2\t
 \lr{F_{\xi_k}(\bz_{k})-F_{\xi_k}(\bw_{k-1}),\bz-\bz_{k}} \notag \\&+2\t \lr{F_{\xi_k}(\bz_{k})-F_{\xi_k}(\bw_{k-1}),
   \bz_{k}-\bz_{k+1}} - \a\n{\bz_{k+1}-\bz_k}^2 - (1-\a)\n{\bz_{k+1}-\bw_k}^2.
\end{align}
We take expectation conditioning on the knowledge of $\bz_k, \bw_k$,
use $\mathbb{E}_k  F_{\xi_k}(\bz_k) =  F(\bz_k)$, $\mathbb{E}_k
F_{\xi_k}(\bw_{k-1}) = F(\bw_{k-1})$, and monotonicity of $F$ for the
third term in the LHS. This yields
    \begin{align}
      \label{eq: forb_2}
      &\E_k \left[\n{\bz_{k+1}-\bz}^2 + 2\t \lr{F(\bz_{k+1})-F(\bw_k),\bz-\bz_{k+1}}  + (1-\a)\n{\bz_{k+1}-\bw_k}^2\right]\notag \\ \leq \a &\n{\bz_k-\bz}^2 + (1-\a)\n{\bw_k-\bz}^2 +2\t
      \lr{F(\bz_{k})-F(\bw_{k-1}),\bz-\bz_{k}} \notag \\&+2\t \E_k\left[\lr{F_{\xi_k}(\bz_{k})-F_{\xi_k}(\bw_{k-1}),
          \bz_{k}-\bz_{k+1}} - \a\n{\bz_{k+1}-\bz_k}^2\right].
    \end{align}
 Using~\Cref{asmp: asmp1}(iv), Cauchy-Schwarz and Young's inequalities, we can bound the last
 line above as
\begin{align}\label{eq: cs_ineq}
&\E_k[2\t\langle F_{\xi_k}(\bz_k) - F_{\xi_k}(\bw_{k-1}), \bz_k
  - \bz_{k+1} \rangle -  \a\n{\bz_{k+1}-\bz_k}^2]\notag \\ \leq & \E_k
  \left[\frac{\t^2}{\a\gamma} \| F_{\xi_k}(\bz_k) - F_{\xi_k}(\bw_{k-1}) \|^2+\a\gamma \| \bz_{k+1} - \bz_k\|^2-  \a\n{\bz_{k+1}-\bz_k}^2\right] \notag \\
\leq \frac{(1-\a)\gamma}{L^2} &\E_k \| F_{\xi_k}(\bz_k) - F_{\xi_k}(\bw_{k-1}) \|^2 - (1-\gamma) \alpha \| \bz_{k+1} - \bz_k\|^2 \notag \\
&\leq  (1-\a)\gamma\n{\bz_k-\bw_{k-1}}^2 - (1-\gamma)\alpha \| \bz_{k+1} - \bz_k\|^2.
\end{align}
Adding~\eqref{eq: eg1_wexp} and~\eqref{eq: cs_ineq} to~\eqref{eq:
  forb_2},  we obtain
\begin{equation*}
    \E_k[ \Phi_{k+1}(\bz)] \leq \Phi_k(\bz) -
    (1-\alpha)(1-\gamma)\n{\bz_k-\bw_{k-1}}^2 - (1-\gamma)\alpha \| \bz_{k+1} - \bz_k\|^2.
\end{equation*}
The rest of the proof is the same as~\Cref{th: fbf}. The only difference is that instead of~\eqref{eq: fix_pt_fbf}, we have
\begin{equation}
\bz_{k+1}(\theta) - \bzb_k(\theta) + \tau \left( F_{\xi_k}(\bz_k(\theta)) - F_{\xi_k}(\bw_{k-1}(\theta)) \right) + \tau \left( F(\bz_{k+1}(\theta)) - F(\bw_k(\theta)) \right)\in \tau \left( F+G \right)(\bz_{k+1}(\theta)),
\end{equation}
which gives the same conclusion as $F_{\xi}$ is continuous for all $\xi$, $\bz_{k+1} - \bzb_k \to 0$, $\bz_{k+1} - \bw_k \to 0$ almost surely.
\end{proof}
\begin{remark}\label{rem: rem1_label}
Even though we will set the parameters $\alpha$, $p$, $\tau$ by
optimizing complexity, we observe that the requirements
in~\Cref{th: forb} allows step sizes arbitrary close to
$\frac{1}{2L}$. This already shows flexibility of the analysis,
compared to the strict requirement of $\tau=\frac{p}{4L}$
in~\cite{alacaoglu2020simple}.
\end{remark}
The improvement in the step size choice is due to using $\bar \bz_k$ which allows us to use tighter estimations whereas the analysis in~\cite{alacaoglu2020simple} needs to make use of multiple Young's inequalities.
In particular, we use $\bz_\ast$ as an anchor point in~\eqref{eq:
  eg1_wexp}, whereas~\cite{alacaoglu2020simple} uses $\bz_k$ as anchor
point, which requires Young's inequalities to transform to $\bz_{k-1}$
and obtain a telescoping sum.
Finally, as~\Cref{cor: fbf}, we give the complexity of the algorithm for solving VI in the spirit of~\Cref{sec: euc_sub_rate}.
\begin{corollary}\label[corollary]{cor: forb}
Let $\alpha = 1-p = 1 - \frac{2}{N}$ and $\bz^K = \frac{1}{K} \sum_{k=0}^{K-1} \bz_k$. Then, the total complexity to get an $\varepsilon$-accurate solution to~\eqref{eq: prob_vi} is $\mathcal{O}\left(N+\frac{\sqrt{N}L}{\epsilon} \right)$.
\end{corollary}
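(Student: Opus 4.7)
The plan is to reproduce for FoRB the sequence of steps that led from Lemma lem: eg1 through Theorem eq: eg1_th2 to Corollary cor:eucl, but now based on the one-step estimate proved in Theorem th: forb. Since the complexity-obtaining arithmetic at the end is identical to the EG case, the work will concentrate on producing the FoRB analogue of the rate in Theorem eq: eg1_th2.

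First I would redo the proof of Theorem th: forb carrying a general test point $\bz\in\dom g$ instead of $\bz_\ast\in\Sol$. For $G=\partial g$, the resolvent step gives, for every $\bz$,
\begin{equation*}
\langle \bz_{k+1}-\bzb_k+\tau[F(\bw_k)+F_{\xi_k}(\bz_k)-F_{\xi_k}(\bw_{k-1})],\bz-\bz_{k+1}\rangle \geq \tau[g(\bz_{k+1})-g(\bz)],
\end{equation*}
which replaces the monotonicity-of-$G$ inequality used at $\bz_\ast$. Running the rest of that proof verbatim (the decomposition of $2\lr{\bz_{k+1}-\bzb_k,\bz-\bz_{k+1}}$, the insertion of $\pm 2\tau\lr{F(\bz_k)-F(\bw_{k-1}),\bz-\bz_k}$, the Cauchy--Schwarz/Young's inequality bound on the cross term, and the bookkeeping identity for $\bw_{k+1}$) yields
\begin{equation*}
2\tau\,\Theta_{k+1}(\bz)+\Phi_{k+1}(\bz)\leq \Phi_k(\bz)+e_1(\bz,k)+e_2(\bz,k)-R(k),
\end{equation*}
where $\Theta_{k+1}(\bz)=\lr{F(\bz_{k+1}),\bz_{k+1}-\bz}+g(\bz_{k+1})-g(\bz)$, $R(k)\geq 0$ absorbs the nonnegative residuals, and $e_1,e_2$ are the analogues of the martingale errors in Theorem eq: eg1_th2: $e_1(\bz,k)$ coming from replacing $F_{\xi_k}$ by $F$ inside an inner product against $\bz$, and $e_2(\bz,k)$ coming from $\mathbb{E}_{k+1/2}[\bw_{k+1}]=p\bz_{k+1}+(1-p)\bw_k$.

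Second, I would sum over $k=0,\dots,K-1$, telescope $\Phi_k$, take $\max_{\bz\in\mathcal{C}}$ and total expectation, and use monotonicity of $F$ plus convexity of $g$ in the standard way to get $K\,\mathbb{E}[\Gap(\bz^K)]\leq \mathbb{E}\max_{\bz\in\mathcal{C}}\sum_{k}\Theta_{k+1}(\bz)$ for $\bz^K=\tfrac{1}{K}\sum_{k=0}^{K-1}\bz_k$ (the index shift against the statement is cosmetic). The martingale terms are bounded by two applications of Lemma lem: exp_max_lem with the same choices of $\mathcal{F}_k$ and $\bu_{k+1}$ as in Theorem eq: eg1_th2; the resulting second-moment sums $\sum_k \mathbb{E}\|\bz_k-\bw_{k-1}\|^2$ and $\sum_k\mathbb{E}\|\bz_{k+1}-\bw_k\|^2$ are finite by the Lyapunov contraction already proved in Theorem th: forb, with the per-step decrement controlling both $\alpha\|\bz_{k+1}-\bz_k\|^2$ and $(1-\alpha)\|\bz_k-\bw_{k-1}\|^2$. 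Picking $\tau=\Theta(\sqrt{p(1-p)}/L)$ with $\alpha=1-p$ balances these errors against $\Phi_0$ and delivers $\mathbb{E}[\Gap(\bz^K)]=\mathcal{O}(L/(\sqrt{p}K))$. Then $p=2/N$ and per-iteration expected cost $pN+\mathcal{O}(1)=\mathcal{O}(1)$ give the claimed $\mathcal{O}(N+\sqrt{N}L/\varepsilon)$ bound (the additive $N$ from the one full operator evaluation at startup).

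The main obstacle I anticipate is handling the cross term $2\tau\lr{F(\bz_{k+1})-F(\bw_k),\bz-\bz_{k+1}}$ that is part of $\Phi_k$: unlike the pure squared distance in EG, it depends linearly on the test point $\bz$, so interactions with $\max_{\bz\in\mathcal{C}}$ and the martingale bounds need care. The saving grace is that this cross term telescopes together with $\Phi_k$, so after summation only the $k=0$ boundary contribution and a $k=K$ remainder of the form $2\tau\lr{F(\bz_K)-F(\bw_{K-1}),\bz-\bz_K}$ survive; the latter is dominated by $\tau L\|\bz_K-\bw_{K-1}\|\cdot\mathrm{diam}(\mathcal{C})$ and is absorbed by the summable residuals $\|\bz_k-\bw_{k-1}\|^2$ coming from Theorem th: forb via Young's inequality, exactly as the analogous terms were absorbed in the estimate ineq_sub_last at the end of the proof of Theorem eq: eg1_th2.
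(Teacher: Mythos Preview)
Your proposal is correct and follows essentially the same route the paper indicates (``the derivation is essentially the same as~\Cref{sec: euc_sub_rate} and therefore omitted''): redo the one-step estimate of Theorem~\ref{th: forb} at a general $\bz$, telescope, and bound the two martingale errors via Lemma~\ref{lem: exp_max_lem} with the residual sums controlled by the Lyapunov decrease. One small simplification for the obstacle you flag: the boundary term $\Phi_K(\bz)$ is nonnegative for \emph{every} $\bz\in\dom g$ by the same Cauchy--Schwarz/Young argument that shows $\Phi_k(\bz_\ast)\geq 0$ (since $\tau L\leq \sqrt{\alpha(1-\alpha)}$), so you can simply drop it rather than estimating the cross term against $\mathrm{diam}(\mathcal{C})$.
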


\subsection{Linear convergence}\label{sec: lin_conv}
In this section, we illustrate how to obtain linear convergence of Alg.~\ref{alg: eg1} for solving VI~\eqref{eq: prob_vi} when $g$ is $\mu$-strongly convex. Alternatively, one can replace this assumption with strong monotonicity of $F$, which we omit for brevity. One can use the same arguments for FBF and FoRB variants in the previous sections to show linear convergence for solving strongly monotone inclusions.
\begin{theorem}\label{th: th_lin}
Let~\Cref{asmp: asmp1} hold, $g$ be $\mu$-strongly convex, and
$\bz_*$ be the solution of~\eqref{eq: prob_vi}. If we set
$\a = 1-p$ and $\tau = \frac{\sqrt{p}}{2L}$ in Alg.~\ref{alg: eg1}, then
it holds that
\begin{equation*}
\mathbb{E}\| \bz_{k} - \bz_\ast \|^2  \leq \left( \frac{1}{1+c/3}\right)^k  \frac{2}{1-p}\| \bz_0 - \bz_\ast \|^2,
\end{equation*}
with $c=\min\left\{\frac{3p}{8}, \frac{\sqrt p \mu}{2L} \right\}$.
\end{theorem}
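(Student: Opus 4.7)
The plan is to recycle the proof of~\Cref{lem: eg1} with one sharpening to harvest the strong convexity, and then promote the resulting one-step decrease into a geometric contraction of $\Phi_k(\bz_\ast)$. The only deterministic change is to~\eqref{eq: eg1_ip2.5}: since $-F(\bz_\ast) \in \partial g(\bz_\ast)$ and $g$ is $\mu$-strongly convex, we have $g(\bz_{k+1/2}) - g(\bz_\ast) \ge -\langle F(\bz_\ast), \bz_{k+1/2} - \bz_\ast\rangle + \tfrac{\mu}{2}\|\bz_{k+1/2}-\bz_\ast\|^2$, and combined with monotonicity of $F$ this upgrades~\eqref{eq: eg1_ip2.5} to $\le -\tfrac{\mu}{2}\|\bz_{k+1/2}-\bz_\ast\|^2$. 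Carrying this improvement through the rest of the proof of~\Cref{lem: eg1}, with $\alpha = 1-p$ and $\tau = \sqrt p/(2L)$ so that $\gamma = 1/2$, yields the strengthened one-step inequality
\[
\E_k[\Phi_{k+1}(\bz_\ast)] + \tau\mu\|\bz_{k+1/2}-\bz_\ast\|^2 + \tfrac{p}{2}\|\bz_{k+1/2}-\bw_k\|^2 + \tfrac12\E_k\|\bz_{k+1}-\bz_{k+1/2}\|^2 \le \Phi_k(\bz_\ast).
\]

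To convert this into a multiplicative contraction, first note that averaging over $\bw_{k+1}$ gives $\E_k[\Phi_{k+1}(\bz_\ast)] = \E_k\|\bz_{k+1}-\bz_\ast\|^2 + (1-p)\|\bw_k-\bz_\ast\|^2$. I then apply Young's inequality $\|a+b\|^2 \le (1+\nu)\|a\|^2 + (1+1/\nu)\|b\|^2$ with $\nu = 3$, centered at $\bz_{k+1/2}$:
\[
\E_k\|\bz_{k+1}-\bz_\ast\|^2 \le 4\E_k\|\bz_{k+1}-\bz_{k+1/2}\|^2 + \tfrac{4}{3}\|\bz_{k+1/2}-\bz_\ast\|^2,
\]
\[
(1-p)\|\bw_k-\bz_\ast\|^2 \le 4(1-p)\|\bw_k-\bz_{k+1/2}\|^2 + \tfrac{4(1-p)}{3}\|\bz_{k+1/2}-\bz_\ast\|^2.
\]
Adding these and then requiring that $\tfrac{c}{3}\E_k[\Phi_{k+1}(\bz_\ast)]$ be dominated term-by-term by the slack reduces to the three scalar conditions $\tfrac{4c}{3}\le\tfrac12$, $\tfrac{4c(1-p)}{3}\le\tfrac{p}{2}$, and $\tfrac{4c(2-p)}{9}\le \tau\mu$. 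Using $\tau=\sqrt p/(2L)$ together with $p\le 1$, all three are implied by $c\le\min\{3p/8,\ \sqrt p\mu/(2L)\}$, yielding $(1+c/3)\E_k[\Phi_{k+1}(\bz_\ast)] \le \Phi_k(\bz_\ast)$.

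Taking total expectation and iterating gives $\E[\Phi_k(\bz_\ast)] \le (1+c/3)^{-k}\Phi_0(\bz_\ast)$. Since $\bz_0 = \bw_0$, a direct substitution into $\Phi_0$ gives $\Phi_0(\bz_\ast) = (2-p)\|\bz_0-\bz_\ast\|^2 \le 2\|\bz_0-\bz_\ast\|^2$, and from the definition of $\Phi_k$ we have $(1-p)\|\bz_k-\bz_\ast\|^2 \le \Phi_k(\bz_\ast)$; dividing by $1-p$ delivers the stated bound. The delicate bookkeeping is the choice of $\nu$: it must be large enough so that the reciprocal coefficient $(1+1/\nu)=4/3$ on $\|\bz_{k+1/2}-\bz_\ast\|^2$ can be paid for by the strong-convexity slack $\tau\mu\|\bz_{k+1/2}-\bz_\ast\|^2$ (this drives the $\sqrt p\mu/(2L)$ branch of $c$), yet small enough that the factor $(1+\nu)=4$ on $\E_k\|\bz_{k+1}-\bz_{k+1/2}\|^2$ fits inside the slack $\tfrac12\E_k\|\bz_{k+1}-\bz_{k+1/2}\|^2$ (this drives the $3p/8$ branch); $\nu=3$ comfortably balances these and leaves margin on all three constraints.
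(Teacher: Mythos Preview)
Your proof is correct and reaches the same contraction factor, but it takes a genuinely different route from the paper. The paper injects strong convexity into the \emph{prox inequality} for $\bz_{k+1}$ (the first line of~\eqref{eq: prox_ineqs}), which produces an extra term $\tau\mu\,\E_k\|\bz_{k+1}-\bz_\ast\|^2$ on the left of~\eqref{eq: eg1_main1}; it then transfers part of this slack to $\|\bw_{k+1}-\bz_\ast\|^2$ by exploiting the randomized update $\bw_{k+1}\in\{\bz_{k+1},\bw_k\}$ together with two applications of $\|u+v\|^2\le 2\|u\|^2+2\|v\|^2$. You instead inject strong convexity through the \emph{solution characterization} $-F(\bz_\ast)\in\partial g(\bz_\ast)$, upgrading~\eqref{eq: eg1_ip2.5} and obtaining the slack at the midpoint $\bz_{k+1/2}$; then you bound $\E_k[\Phi_{k+1}(\bz_\ast)]$ by a single Young splitting centered at $\bz_{k+1/2}$. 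Your path avoids re-opening the law of $\bw_{k+1}$ and keeps the three resulting scalar constraints transparent; the paper's path is perhaps more natural if one later wants to trade strong convexity of $g$ for strong monotonicity of $F$ (where the extra term naturally attaches to $\bz_{k+1}$ via the prox step). Both arguments need the same parameter choices and yield identical constants.
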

\begin{proof}
In~\eqref{eq: prox_ineqs}, we use strong convexity of $g$ to have an additional term $\frac{\tau\mu}{2} \|\bz_{k+1} - \bz \|^2$ on the right-hand side of the first inequality. Next, we continue as in the proof of~\Cref{lem: eg1} to obtain, instead of~\eqref{eq: eg1_main1},
\begin{multline*}
\left( 1+ \tau\mu \right)\mathbb{E}_k\left[\| \bz_{k+1} -\bz_\ast \|^2 \right] \leq \alpha \|\bz_k-\bz_\ast\|^2 + (1-\alpha)\|\bw_k -\bz_\ast\|^2 -(1-\alpha)(1-\gamma) \|\bz_{k+1/2} -\bw_k\|^2 \\
-(1-\gamma) \mathbb{E}_k\left[\| \bz_{k+1} - \bz_{k+1/2} \|^2\right].
\end{multline*}
We add~\eqref{eq: eg1_wexp} to this inequality after using
the tower property, to deduce
\begin{multline*}
\left(\a + \tau\mu \right)\mathbb{E}_k\left[\| \bz_{k+1} -\bz_\ast \|^2 \right] + \frac{1-\alpha}{p} \mathbb{E}_k \left[ \| \bw_{k+1} - \bz_\ast \|^2 \right] \leq \alpha \|\bz_k-\bz_\ast\|^2 + \frac{1-\alpha}{p} \|\bw_k -\bz_\ast\|^2 \\
-(1-\gamma)\Big((1-\a) \|\bz_{k+1/2} -\bw_k\|^2
+  \mathbb{E}_k\left[\| \bz_{k+1} - \bz_{k+1/2} \|^2\right] \Big).
\end{multline*}
Since we set $\a = 1-p$ and $\gamma
= \frac 1 2 $, we can rewrite it as
\begin{multline}
\left(1-p + \tau\mu \right)\mathbb{E}_k\left[\| \bz_{k+1} -\bz_\ast \|^2 \right] +  \mathbb{E}_k \left[ \| \bw_{k+1} - \bz_\ast \|^2 \right] \leq (1-p) \|\bz_k-\bz_\ast\|^2 +  \|\bw_k -\bz_\ast\|^2 \\
-\frac 1 2 \left(p \|\bz_{k+1/2} -\bw_k\|^2
+  \mathbb{E}_k\left[\| \bz_{k+1} - \bz_{k+1/2} \|^2\right] \right).\label{eq: eg1_lin1.5}
\end{multline}
Next, by $2\n{u}^2+2\n{v}^2\geq \n{u+v}^2$ applied two times,
\begin{align*}
\frac{2c}{3} \mathbb{E}_k\|\bz_{k+1} - \bz_\ast \|^2 &\geq \frac{c}{3} \mathbb{E}_k\| \bw_{k+1} - \bz_\ast \|^2 - \frac{2c}{3} \mathbb{E}_k\left[\mathbb{E}_{k+1/2}\| \bz_{k+1} - \bw_{k+1} \|^2\right] \\
&= \frac{c}{3} \mathbb{E}_k\| \bw_{k+1} - \bz_\ast \|^2 - \frac{2c(1-p)}{3} \mathbb{E}_k\| \bz_{k+1} - \bw_{k} \|^2 \\
&\geq \frac{c}{3} \mathbb{E}_k\| \bw_{k+1} - \bz_\ast \|^2- \frac{4c}{3} \mathbb{E}_k\| \bz_{k+1} - \bz_{k+1/2} \|^2 - \frac{4c}{3} \| \bz_{k+1/2} - \bw_k\|^2.
\end{align*}
Using this inequality in~\eqref{eq: eg1_lin1.5}  and that $c\leq
\frac{\sqrt p\mu}{2L} =\t \mu $ gives us
\begin{multline}
\left(1-p+ \frac{c}{3} \right)\mathbb{E}_k\left[\| \bz_{k+1} -\bz_\ast \|^2 \right] + \left(1+ \frac{c}{3} \right) \mathbb{E}_k \left[ \| \bw_{k+1} - \bz_\ast \|^2 \right] \leq (1-p) \|\bz_k-\bz_\ast\|^2 +  \|\bw_k -\bz_\ast\|^2 \\
-\frac 1 2\left(p \|\bz_{k+1/2} -\bw_k\|^2
+ \mathbb{E}_k \| \bz_{k+1} - \bz_{k+1/2} \|^2 \right) + \frac{4c}{3} \left( \| \bz_{k+1/2} - \bw_k \|^2 + \mathbb{E}_k\|\bz_{k+1} - \bz_{k+1/2} \|^2 \right).\label{eq: eg1_lin2}
\end{multline}
By our choice of $c$, we have $\frac{4c}{3} \leq \frac{p}{2}$ and,
therefore, the second line of~\eqref{eq: eg1_lin2} is
nonpositive. Using $1-p+\frac c 3 > (1-p)(1+\frac c3)$ and taking
total expectation, yields
\begin{equation*}
\left(1+\frac{c}{3}\right) \mathbb{E}\left[ (1-p) \|\bz_{k+1} - \bz_\ast \|^2 + \| \bw_{k+1} - \bz_\ast \|^2 \right] \leq \mathbb{E} \left[ (1-p) \|\bz_{k} - \bz_\ast \|^2 + \| \bw_{k} - \bz_\ast \|^2 \right].
\end{equation*}
By iterating this inequality, we obtain
\begin{equation*}
(1-p) \mathbb{E} \| \bz_{k} - \bz_\ast \|^2 \leq \left( \frac{1}{1+c/3} \right)^k (2-p) \| \bz_0 - \bz_\ast \|^2,
\end{equation*}
which gives the result.
\end{proof}
\begin{corollary}\label[corollary]{cor: lin_conv}
Let $p = \frac{2}{N}$, $\tau = \frac{\sqrt{p}}{2L}$. The total average complexity is $\mathcal{O}\left(\left(N + \frac{\sqrt{N}L}{\mu} \right)\log\frac{1}{\epsilon} \right)$.
\end{corollary}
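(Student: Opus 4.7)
The plan is to unfold the linear rate from~\Cref{th: th_lin} with the specified parameters and then multiply by the average per-iteration cost, exactly as in the proof of~\Cref{cor:eucl}.

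First I would substitute $p=\frac{2}{N}$ into the definition $c = \min\{\tfrac{3p}{8},\tfrac{\sqrt{p}\mu}{2L}\}$ to obtain $c = \min\{\tfrac{3}{4N},\tfrac{\mu}{L\sqrt{2N}}\}$. The contraction factor from~\Cref{th: th_lin} is then $\rho = \frac{1}{1+c/3}$, and using $\log(1/\rho) \geq \frac{c/3}{1+c/3}\geq \frac{c}{6}$ for $c\in(0,1)$, we see that to guarantee $\mathbb{E}\|\bz_k-\bz_\ast\|^2\leq \varepsilon$ it suffices to take $k = \mathcal{O}\bigl(\tfrac{1}{c}\log\tfrac{1}{\varepsilon}\bigr)$ iterations, i.e., $k = \mathcal{O}\bigl(\max\{N, \tfrac{\sqrt{N}L}{\mu}\}\log\tfrac{1}{\varepsilon}\bigr)$ iterations of~\Cref{alg: eg1}.

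Next, as in~\Cref{rem: complexity_remark} and the proof of~\Cref{cor:eucl}, each iteration of~\Cref{alg: eg1} costs, in expectation, $pN+2 = 4$ calls to $F_\xi$ (two deterministic ones for $\bz_{k+1/2}$ and $\bz_{k+1}$, plus one full evaluation of $F$ refreshed with probability $p=2/N$, which equals $pN=2$ stochastic calls in expectation). To this we add the initial cost $N$ of computing $F(\bw_0)$. Multiplying the iteration count by this constant per-iteration cost and absorbing it into the big-$\mathcal{O}$ yields the total average complexity
\begin{equation*}
\mathcal{O}\!\left(N + \max\Bigl\{N,\tfrac{\sqrt{N}L}{\mu}\Bigr\}\log\tfrac{1}{\varepsilon}\right) = \mathcal{O}\!\left(\Bigl(N+\tfrac{\sqrt{N}L}{\mu}\Bigr)\log\tfrac{1}{\varepsilon}\right),
\end{equation*}
which is the claimed bound. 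There is no real obstacle here: the work was already done in~\Cref{th: th_lin}, and this corollary is just the accounting step that converts an iteration-count rate into an oracle-call complexity under the chosen parameters.
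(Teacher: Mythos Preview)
Your proof is correct and follows essentially the same approach as the paper: invert the linear rate from~\Cref{th: th_lin} to get an iteration count of order $\cO(\tfrac{1}{c}\log\tfrac{1}{\e})$, then multiply by the expected per-iteration cost $pN+2=4$ under $p=\tfrac{2}{N}$. The only cosmetic difference is that the paper keeps $p$ symbolic until the end and writes $\tfrac{pN+2}{\log(1+c/3)}\approx \tfrac{3}{c}(pN+2)$ before substituting, whereas you plug in $p=\tfrac{2}{N}$ at the outset; both routes give the same bound.
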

\begin{proof}
The $\e$-accuracy is reached after
$\cO(\log\frac{1}{\e}/\log(1+\frac c 3))$ iterations. This yields a factor
$\frac{pN+2}{\log(1+\frac c 3)}\approx \frac{3}{c}(pN+2)$
in total complexity. Using our choice for $c$, we obtain total average  complexity
\begin{equation*}
\max\left\{ \frac{8}{p}, \frac{6L}{\sqrt p \mu} \right\}(pN+2) \leq \frac{32}{p} + \frac{24L}{\sqrt{p}\mu} = 16N + \frac{12\sqrt{2N}L}{\mu}.
\end{equation*}
We lastly multiply the last estimate with $\log\left( \varepsilon^{-1}\right)$.
\end{proof}

\begin{remark}
In this case, Alg.~\ref{alg: eg1} has complexity
$\mathcal{O} \left(\left(N + \frac{\sqrt{N}L}{\mu}\right)\log\frac{1}{\varepsilon}\right)$,
compared to the deterministic methods
$\mathcal{O} \left( \frac{NL_F}{\mu}\log\frac{1}{\varepsilon}\right)$.
This complexity recovers the previously obtained result in~\cite{balamurugan2016stochastic} and~\cite[Section 5.4]{carmon2019variance}, where our advantage is having algorithmic parameters independent of $\mu$ and having more general assumptions.
\end{remark}
\section{Applications}\label{sec: apps}
\subsection{Bilinear min-max problems}\label{sec: bilinear}
In this section, we analyze the overall complexity of our method compared to deterministic extragradient and show the complexity improvements.

\paragraph{Notation.} For a vector $\bx$ we use $x_i$ to denote its
$i$-th coordinate and for an indexed vector $\bx_k$ it is
$x_{k,i}$. For a matrix $A\in\mathbb{R}^{m\times n}$ we denote a number of its non-zero entries
by $\nnz(A)$; it is exactly the complexity of computing $A\bx$ or
$A^\top \by$. We use the spectral, Frobenius and max norms of $A$
defined as $\n{A} = \s_{\max}(A)$,
$\n{A}_\Fr = \sqrt{\sum_{i,j} A_{ij}^2}=\sqrt{\sum_{i=1}^{\rank(A)}
  \s_i(A)^2}$, and $\n{A}_{\max}=\max_{i,j}|A_{ij}|$. For $i$-th row
and $j$-th column of $A$ we use a convenient notation $A_{i:}$ and
$A_{:j}$.
Here, for simplicity, we measure complexity in terms of arithmetic operations.
\paragraph{Problem.} The general problem that we consider is
\begin{equation*}
\min_{\bx\in\mathbb{R}^n}\max_{\by\in\mathbb{R}^m} \langle A\bx, \by\rangle + g_1(\bx) - g_2(\by),
\end{equation*}
where $g_1, g_2$ are proper convex lsc functions.
We can formulate this problem as a VI by setting
\begin{equation}\label{eq:F and g}
F(\bz) = F(\bx,\by) = \binom{A^\tr \by}{-A\bx}, \qquad g(\bz) = g_1(\bx)
+ g_2(\by).
\end{equation}

\subsubsection{Linearly constrained minimization}\label[section]{sec: lin_cons}
A classical example of bilinear saddle point problems is linearly constrained minimization
\begin{equation*}
\min_{\bx\in\mathbb{R}^n} f(\bx): A\bx = b,
\end{equation*}
where $f$ is proper convex lsc.
The equivalent min-max formulation corresponds to~\eqref{eq:F and g} when $g_1(\bx) = f(\bx)$ and $g_2(\by) = \langle b,\by \rangle$.

We will instantiate Alg.~\ref{alg: eg1} for this problem.
To make our presentation clearer, we consider only the most common
scenario when $\nnz(A) > m + n$. In this setting, deterministic methods
(extragradient, FBF, FoRB, etc.) solve \eqref{eq:minmax} with
$\mathcal{O}\left(\nnz(A)\n{A}\e^{-1}\right)$ total
complexity. As we see in the sequel, variance reduced methods provide us
$\mathcal{O}\left(\nnz(A) + \sqrt{\nnz(A)(m+n)}\n{A}_\Fr\e^{-1}\right)$ total
complexity.
We now describe the definition of $F_\xi$ with two oracle choices.
The first choice is the version of ``importance'' sampling described in~\Cref{sec: prelim}.

\paragraph{Oracle 1.} The fixed distribution (the same in every iteration) is defined as
\[F_\xi(\bz) = \binom{\frac{1}{r_i} A_{i:}y_i}{-\frac{1}{c_j}
    A_{:j}x_j},\quad \qquad  \Pr\{\xi=(i,j)\} = r_ic_j, \quad r_i
=\frac{\n{A_{i:}}_2^2}{\n{A}_{{\Fr}}^2},\quad c_j =
\frac{\n{A_{:j}}_2^2}{\n{A}_{{\Fr}}^2}. \]
In the view of~\Cref{asmp: asmp1}, the Lipschitz constant of $F_\xi$ can be computed as
\begin{align}
\E\n{F_{\xi}(\bz)}^2_2 &=\mathop{\E}_{i\sim r}\left[\frac{1}{r_i^2}\n{A_{i:}y_i}^2_2\right] +
  \mathop{\E}_{j\sim c}\left[\frac{1}{c_j^2}\n{A_{:j}x_j}^2_2\right]= \sum_{i=1}^m
  \frac{1}{r_i}\n{A_{i:}y_i}^2_{2}+\sum_{j=1}^n\frac{1}{c_j} \n{A_{:j}x_j}^2_{2}\notag \\ &= \sum_{i=1}^m\frac{1}{r_i}\n{A_{i:}}_{2}^2\left( y_i\right)^2+\sum_{j=1}^n \frac{1}{c_j}
  \n{A_{:j}}^2_{2}\left( x_j\right)^2= \n{A}_{\Fr}^2\n{\bz}^2_2.\label{eq: emr4}
\end{align}
\paragraph{Oracle 2.} The second stochastic oracle is slightly more
complicated, since it is iteration-dependent as~\cite{carmon2019variance}.
We use the setting of \Cref{asmp: asmp2}. Given $\bu = (\bu^x,\bu^y)$ and $\bv = (\bv^x, \bv^y)$, for
$\bz=(\bx,\by)$, we
define
\[F_\xi(\bz) = \binom{\frac{1}{r_i} A_{i:}y_i}{-\frac{1}{c_j}
    A_{:j}x_j}, \qquad \Pr\{\xi=(i,j)\} = r_ic_j, \quad r_i
  =\frac{|u^y_i - v^y_i|^2}{\n{\bu^y-\bv^y}^2},\quad c_j =
  \frac{|u^x_j-v^x_j|^2}{\n{\bu^x - \bv^x}^2}, \] and call the
described distribution as $Q(\bu,\bv)$.
Similarly, in every iteration of Alg.~\ref{alg: mp1}  we define a distribution
$Q(\bz_{k+1/2}^s, \bw^s)$ and sample $\xi$ according to it.

 Clearly, as before, $F_{\xi}$
is unbiased.
It is easy to show that this oracle is variable $\n{A}_{\Fr}$-Lipschitz.
Its proof is similar to the variable Lipschitz derivation that we will include for matrix games with Bregman distances, in~\Cref{sec: matrix_games}.

\paragraph{Complexity.}
We suppose that computing proximal operators $\prox_{g_1}$,
$\prox_{g_2}$ can be done efficiently in $\tilde \cO(m+n)$
complexity.
Our result in~\Cref{eq: eg1_th2} stated that Alg.~\ref{alg: eg1} has the rate $\mathcal{O}\left( \frac{L}{\sqrt{p}K} \right)$.
Given that the expected cost of each iteration is
$\mathcal{O}\left(p\nnz(A) + m+n\right)$, setting $p
= \frac{m+n}{\nnz(A)}$ gives us the average total complexity
\begin{equation}\label{eq: comp_lin_const}
\tilde{\mathcal{O}}\left( \nnz(A) + \frac{\sqrt{\mathrm{nnz}(A)(m+n)}\n{A}_\Fr}{\e} \right).
\end{equation}
It is easy to see
  that Alg.~\ref{alg: mp1} has the same complexity if we set  $K=\left\lceil\frac{\nnz(A)}{m+n}\right\rceil$. %

Compared to the complexity of deterministic methods, this complexity improves depending on the relation between $\|A\|_{\Fr}$ and $\|A\|$.
In particular, when $A$ is a square dense matrix, due to $\| A\|_\Fr
\leq \sqrt{\rank(A)}\|A\|$, the bound in~\eqref{eq: comp_lin_const} improves that of
deterministic VI methods.
In~\eqref{eq: comp_lin_const} we suppress $\| \bz_0 - \bz_*\|^2$ that is common to all methods considered in this paragraph.

Finally, we remark that the analysis in~\cite[Section 5.2]{carmon2019variance}
requires the additional assumption that $\bz\mapsto\langle F(\bz) + \tilde{\nabla}
f(\bz), \bz - \bu \rangle$ is convex for all $\bu$ to apply
to this case, where we  denote a subgradient of $f$ by $\tilde{\nabla} f$.
This assumption requires more structure on $f$.

\subsubsection{Matrix games}\label[section]{sec: matrix_games}
The problem in this case is written as
\begin{equation}\label{eq:minmax}
\min_{\bx\in\mathcal{X}}\max_{\by\in\mathcal{Y}} \langle A\bx, \by \rangle,
\end{equation}
where $A\in \R^{m\times n}$ and $\mathcal{X}\subset \R^n$,
$\mathcal{Y}\subset \R^m$ are
closed convex sets, projection onto each are easy to compute.
In view of~\eqref{eq:F and g}, we have $g(\bz) = \d_{\cX}(\bx)
+ \d_{\cY}(\by)$.
As we shall see, our complexities in this case recover the ones in~\cite{carmon2019variance}.
We refer to~\Cref{sec: rel_work} for a detailed comparison.

In the Euclidean setup, we suppose that the underlying space $\cZ = \R^n\times \R^m$ has a
Euclidean structure with the norm $\n{\cdot}_2$ and, hence, it coincides with
the dual $\cZ^*$.
In this case, we can use Oracle 1 and Oracle 2 from~\Cref{sec: lin_cons} and we obtain the same complexity as~\eqref{eq: comp_lin_const}.
The same discussions as~\Cref{sec: lin_cons} apply.
\begin{center}
  \underline{\textsc{Bregman setup}}
\end{center}
Let
$\cX = \Delta^n=\{\bx \in \R^n\colon \sum_{i=1}^nx_i=1, x_i\geq 0\}$
and $\cY=\Delta^m$. With this, problem \eqref{eq:minmax} is known as a
zero sum game.  In this case, deterministic algorithms formulated with a
specific Bregman distance (given below) have
$\mathcal{O}\left(\nnz(A)\n{A}_{\max}\e^{-1} \right)$ total
complexity. These settings are standard and we recall them only for
reader's convenience.

For $\cZ=\R^{m+n}$ and $\bz = (\bx,\by) \in \cZ$ we define
$\n{\bz}=\sqrt{\n{\bx}_1^2+\n{\by}_1^2}$. Correspondingly,
$\cZ^*=\left(\R^{m+n},\n{\cdot}_{*}\right)$ is the dual space with
$\n{\bz^*} = \sqrt{\n{\bx^*}^2_{\infty}+\n{\by^*}^2_{\infty}}$ for
$\bz^*=(\bx^*,\by^*)$. For $\bz=(\bx,\by)\in \Delta^n\times \Delta^m$
we use the negative entropy $h_1(\bx) = \sum_{i=1}^{n}x_i \log
x_i$, $h_2(\by) = \sum_{i=1}^{m}y_i \log
y_i$ and set $h(\bz) = h_1(\bx)+h_2(\by) = \sum_{i=1}^{m+n}z_i \log
z_i$. Then we define the Bregman distance  as
\[D(\bz, \bz') = h(\bz) - h(\bz') - \lr{\nabla h(\bz'),\bz-\bz'}=\sum_i
  z_i\log\frac{z_i}{z_i'}. \]
Of course, this definition requires $\bz'$ to be in the relative
interior of $\Delta^n\times \Delta^m$; normally it is satisfied
automatically for the iterates of the algorithm (including our
Alg.~\ref{alg: mp1}).

If we choose $\bz_0 =(\bx_0,\by_0)$ with $\bx_0 =\frac 1 n \one_n$,
$\by_0=\frac{1}{m}\one_m$, it is easy to see that
\[\max_{\bz \in \Delta^n\times \Delta^m}D(\bz,\bz_0)\leq \log n + \log
  m = \log(mn).\]
We know that $D$ satisfies $D(\bz,\bz')\geq \frac 1 2\n{\bz-\bz'}^2$
for all $\bz,\bz'\in \Delta^n\times \Delta^m$.
Deterministic algorithms have constant $\n{A}_{\max}$ in their
complexity, since $F$ defined in \eqref{eq:F and g} is
$\n{A}_{\max}$-Lipschitz:
\[\n{F(\bz)}_*^2 = \n{A^\top \by}_{\infty}^2 + \n{A\bx}^2_{\infty}\leq
  \n{A}^2_{\max}(\n{\bx}_1^2 + \n{\by}_1^2) =   \n{A}^2_{\max}\n{\bz}^2.  \]

\paragraph{Oracle.} The stochastic oracle here is similar to the Oracle 2 in~\Cref{sec: lin_cons} for the
Euclidean case, but with adjustment to the $\ell_1$-norm.  Again we
are in the setting of \Cref{asmp: asmp2}. Given $\bu = (\bu^x,\bu^y)$ and
$\bv = (\bv^x, \bv^y)$, for $\bz=(\bx,\by)$, we define
\[F_\xi(\bz) = \binom{\frac{1}{r_i} A_{i:}y_i}{-\frac{1}{c_j}
    A_{:j}x_j}, \qquad \Pr\{\xi=(i,j)\} = r_ic_j, \quad r_i
  =\frac{|u^y_i - v^y_i|}{\n{\bu^y-\bv^y}_1},\quad c_j =
  \frac{|u^x_j-v^x_j|}{\n{\bu^x - \bv^x}_1}, \] and call the
described distribution as $Q(\bu,\bv)$. We show that $F_\xi$ is
variable $\n{A}_{\max}$-Lipschitz in view of~\Cref{def: def1}. Indeed, we have
\begin{align*}
\mathop{\mathbb{E}}_{\xi \sim Q(\bu,\bv)}[ \| F_{\xi}(\bu)-
  F_{\xi}&(\bv) \|^2_*]=\mathop{\mathbb{E}}_{\xi \sim
  Q(\bu,\bv)}\left[ \| F_{\xi}(\bu -\bv) \|^2_{*}\right] \\ &=
\mathop{\mathbb{E}}_{i\sim r} \left[ \frac{1}{r_i^2} \|
  A_{i:}(u^y_i-v^y_i)\|^2_{\max} \right] + \mathop{\mathbb{E}}_{j\sim c}
\left[ \frac{1}{c_j^2} \| A_{:j} (u^x_j-v^x_j)\|^2_{\max} \right] \\
 &= \sum_{i=1}^m \frac{1}{r_i} \| A_{i:}\|^2_{\max} |u^y_i-v^y_i|^2 +
 \sum_{j=1}^n \frac{1}{c_j} \| A_{:j}\|^2_{\max}|u^x_j-v^x_j|^2\\&\leq
 \sum_{i=1}^m  \| A\|^2_{\max} |u^y_i-v^y_i|\n{\bu^y-\bv^y}_1 +
 \sum_{j=1}^n \|A\|^2_{\max}|u^x_j-v^x_j|\n{\bu^x-\bv^x}_1\\ &=\n{A}^2_{\max}\left(\|\bu^y-\bv^y \|^2_1+ \|\bu^y-\bv^x \|^2_1\right) =\|A\|^2_{\max} \| \bu-\bv \|^2.
\end{align*}
Similarly, in every iteration of Alg.~\ref{alg: mp1} we define a
distribution $Q(\bz_{k+1/2}^s, \bw^s)$ and sample $\xi_k^s$ according
to it.  This stochastic oracle was already used in
\cite{grigoriadis1995sublinear} and used extensively after that, see
\cite{nesterov22first,clarkson2012sublinear} and references
therein. In \cite{carmon2019variance} this oracle was called
``sampling from the difference''.
\paragraph{Complexity.}
In this case, the complexity of deterministic algorithms (Mirror Prox, FoRB) is \\$\mathcal{O}\left( \nnz(A) \| A\|_{\max}\varepsilon^{-1} \right)$.
Our result in~\Cref{cor:compl_br} stated that Alg.~\ref{alg: mp1} has the rate $\mathcal{O}\left( \frac{L}{\sqrt{K}S} \right)$.
Given that the cost of each epoch of Alg.~\ref{alg: mp1} is
$\mathcal{O}\left(\nnz(A) + K(m+n)\right)$, setting $K
= \left\lceil\frac{\nnz(A)}{m+n}\right\rceil$ gives us the total complexity
\begin{equation*}
\tilde{\mathcal{O}}\left(\nnz(A)+ \frac{\sqrt{\nnz(A) (m+n)}\|A\|_{\max}}{\varepsilon} \right),
\end{equation*}
which, in the square dense case, improves the deterministic complexity by $\sqrt{n}$.

\paragraph{Updates.} For concreteness we specify updates in lines~\ref{mp:l1}--\ref{mp:l2}
of Alg.~\ref{alg: mp1}. Let
$\bw^s = (\bu, \bv)$, $\bwb^s=(\bub^s, \bvb^s)$.
\begin{align*}
 \nabla h_1 (\bx_{k+1/2}^s) &= \a \nabla h_1(\bx_k^s) + (1-\a)\nabla
 h_1(\bub^s) - \t A^\top \bv^s\\
 \nabla h_2 (\by_{k+1/2}^s) &= \a \nabla h_2(\by_k^s) + (1-\a)\nabla h_2(\bvb^s) + \t A \bu^s
\end{align*}
Then we form a distribution $Q(\bz_{k+1/2}^s, \bw^s)$
\[\Pr\{\xi=(i,j)\} = r_ic_j, \quad r_i
  =\frac{|y_{k+1/2,i}^s - v_i^s|}{\n{\by_{k+1/2}^s-\bv^s}_1},\quad c_j
  = \frac{|x_{k+1/2,i}^s - u_i^s|}{\n{\bx_{k+1/2}^s-\bu^s}_1} \]
and sample $\xi_k = (i,j)$ according to $Q(\bz_{k+1/2}^s, \bw^s)$. Finally, we update
$\bx_{k+1}^s$ and $\by_{k+1}^s$ as
\begin{align*}
 \nabla h_1 (\bx_{k+1}^s) &= \a \nabla h_1(\bx_k^s) + (1-\a)\nabla
 h_1(\bub^s) - \t A^\top \bv^s -
 \frac{\t}{r_{i}}A_{i:}(y_{k+1/2,i}^s-v_i^s)\\
 &=\nabla h_1 (\bx_{k+1/2}^s) -
 \t A_{i:}\n{\by_{k+1/2}^s-\bv^s}\sign(y_{k+1/2,i}^s-v_i^s)\\[1mm]
 \nabla h_2 (\by_{k+1}^s) &=\nabla h_2 (\by_{k+1/2}^s) + \t A_{:j}\|\bx_{k+1/2}^s - \bu^s \|\sign(x_{k+1/2,j}^s-u_j^s)
\end{align*}

Switching from dual variables $\nabla h_1(\bx)$ to primal $\bx$ is
elementary by  duality:
\[X = \nabla h_1(\bx)\quad \iff \quad \bx = \nabla h^*_1(X) =
  \frac{(e^{X_1},\dots,e^{X_n})}{\sum_{i=1}^n e^{X_i}}\] and similarly
for $\by$. Updates for $\bw$ and $\nabla h(\bwb)$ are
straightforward by means of incremental averaging.

\subsection{Nonbilinear min-max problems}
An important example of nonbilinear min-max problems is constrained optimization
\begin{align*}
&\min_{\bx\in\mathcal{X}} f(\bx) \quad \text{ subject to }\quad h_i(\bx) \leq 0, \text{ for } i \in [N],
\end{align*}
where $f, h_i$ are smooth convex functions.
We can map this problem
to the VI template~\eqref{eq: prob_vi} by setting
\begin{equation*}
  F = \binom{\nabla f(\bx) + \sum_{i=1}^N y_i \nabla
    h_i(\bx)}{-\bigl(h_1(\bx),\dots h_N(\bx)\bigr)^\top}, \qquad g(\bz) = \delta_{\mathcal{X}}(\bx) + \delta_{\mathbb{R}_+^N} (\by).
\end{equation*}
One possible choice for stochastic oracles is to set
\begin{equation}\label{eq: stoc_oracle_const}
F_i(\bz) = \binom{\nabla f(\bx) + N y_i \nabla h_i(\bx)}{Nh_i(\bx)\mathbf{e}_i},
\end{equation}
where $\mathbf{e}_i$ is the $i$-th standard basis vector.
Of course, this form of the oracle will not necessarily be a good choice for specific applications.

In particular, as discussed in~\Cref{sec: outline_results} and in the corollaries of our main theorems, our results will apply in their full generality and they will improve deterministic complexity as long as $L\leq \sqrt{N}L_F$, where $L$ is the Lipschitz constant corresponding to stochastic oracle in view of~\Cref{asmp: asmp1} and $L_F$ is for the full operator.
However, it is not clear that the generic choice in~\eqref{eq: stoc_oracle_const} will satisfy this requirement.
Therefore, one should be careful to design suitable oracles depending on the particular structure of the problem to ensure complexity improvements.
We refer to~\Cref{sec: rel_work} for a detailed comparison with related works.

\begin{figure*}[t]
\captionsetup{font=small}
\begin{center}
  \includegraphics[width=0.32\columnwidth]{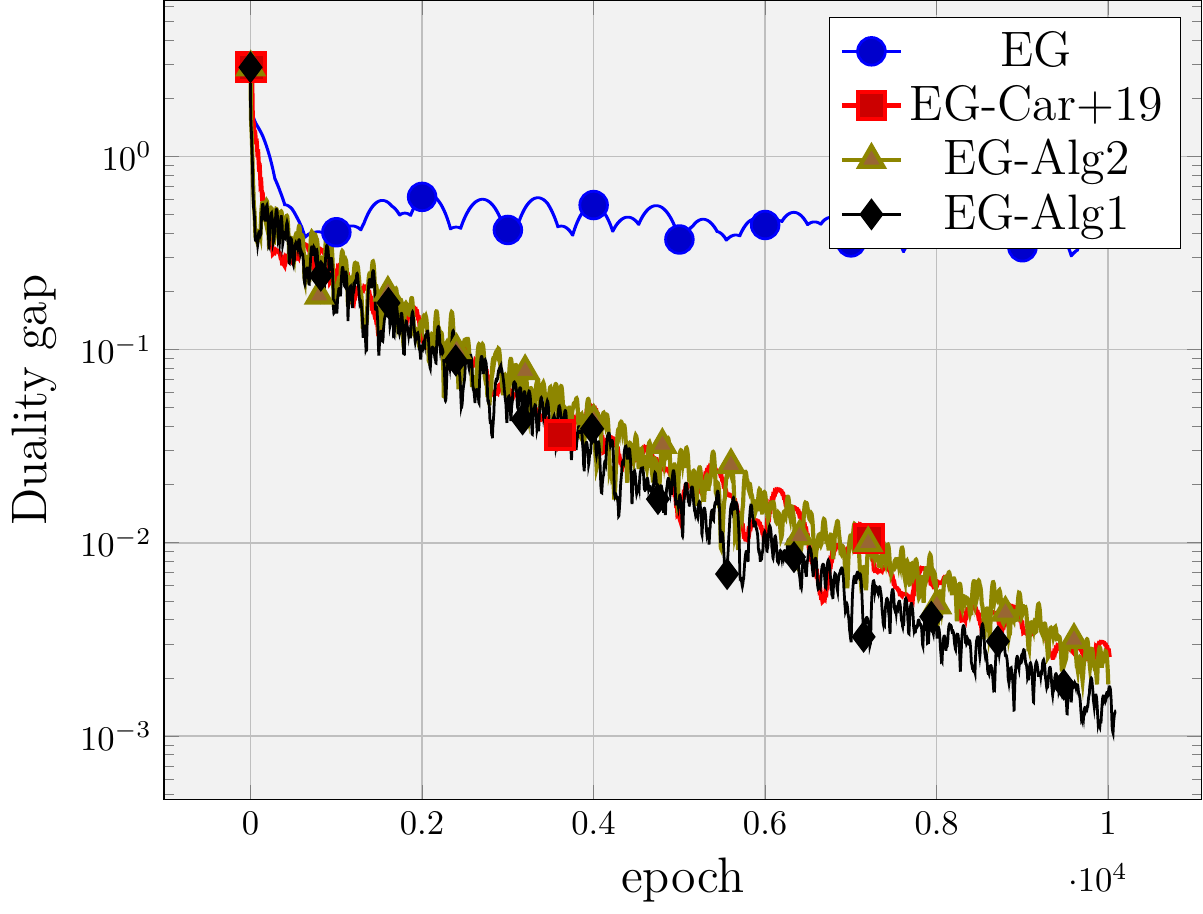}
\includegraphics[width=0.32\columnwidth]{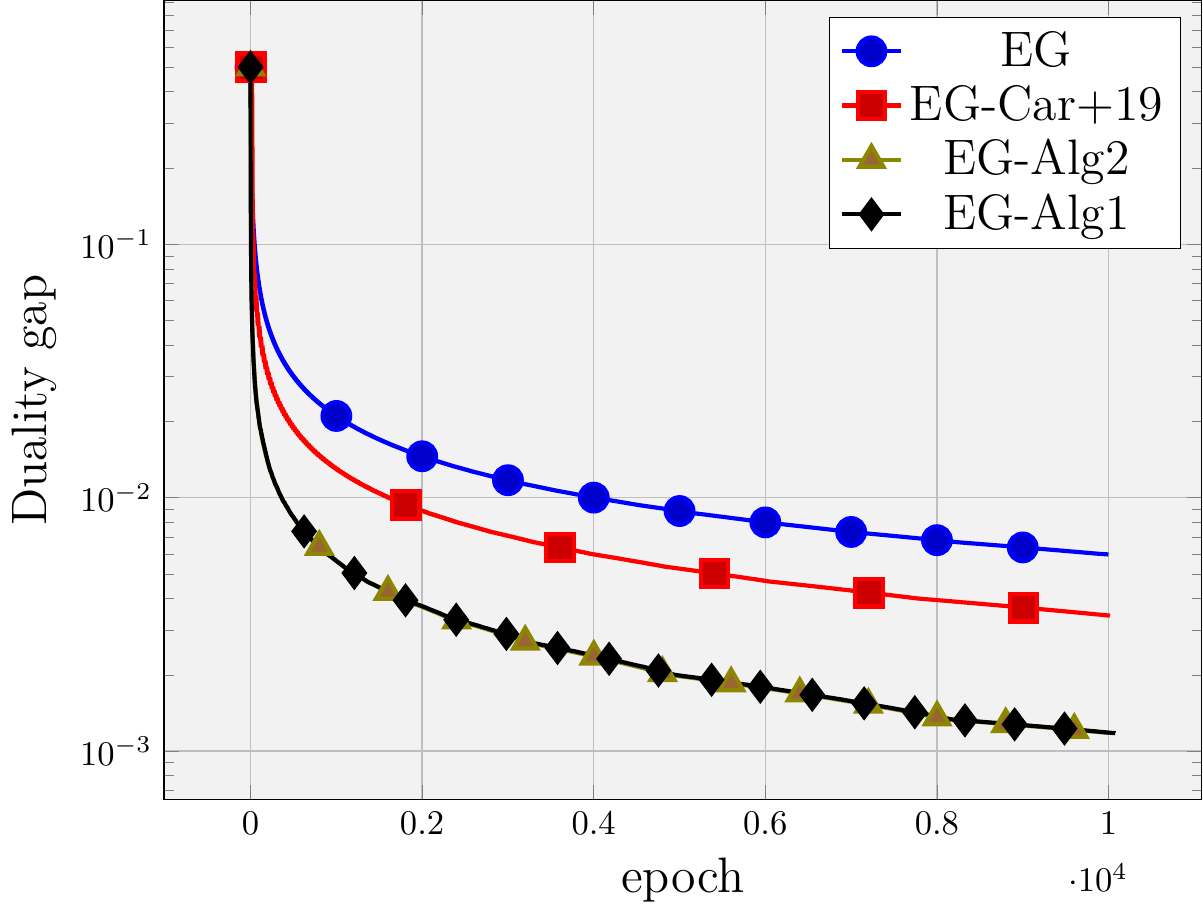}
\includegraphics[width=0.32\columnwidth]{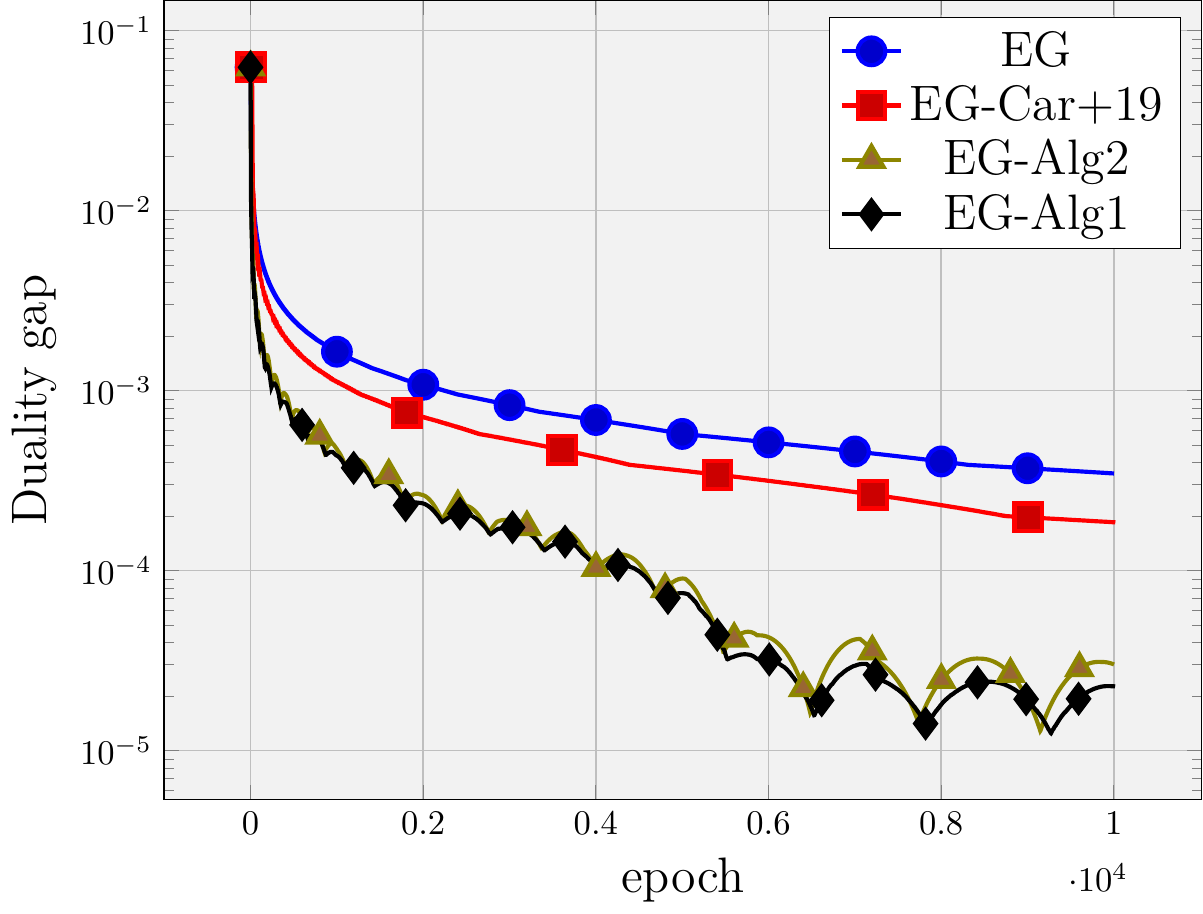}
\caption{Euclidean setup. left: policeman and burglar matrix~\cite{nemirovski2013mini}, middle, right: two test matrices given in~\cite[Section 4.5]{nemirovski2009robust}.}
\label{fig:1}
\end{center}
\vskip -0.3cm
\end{figure*}

\begin{figure*}[t]
\captionsetup{font=small}
\begin{center}
\includegraphics[width=0.32\columnwidth]{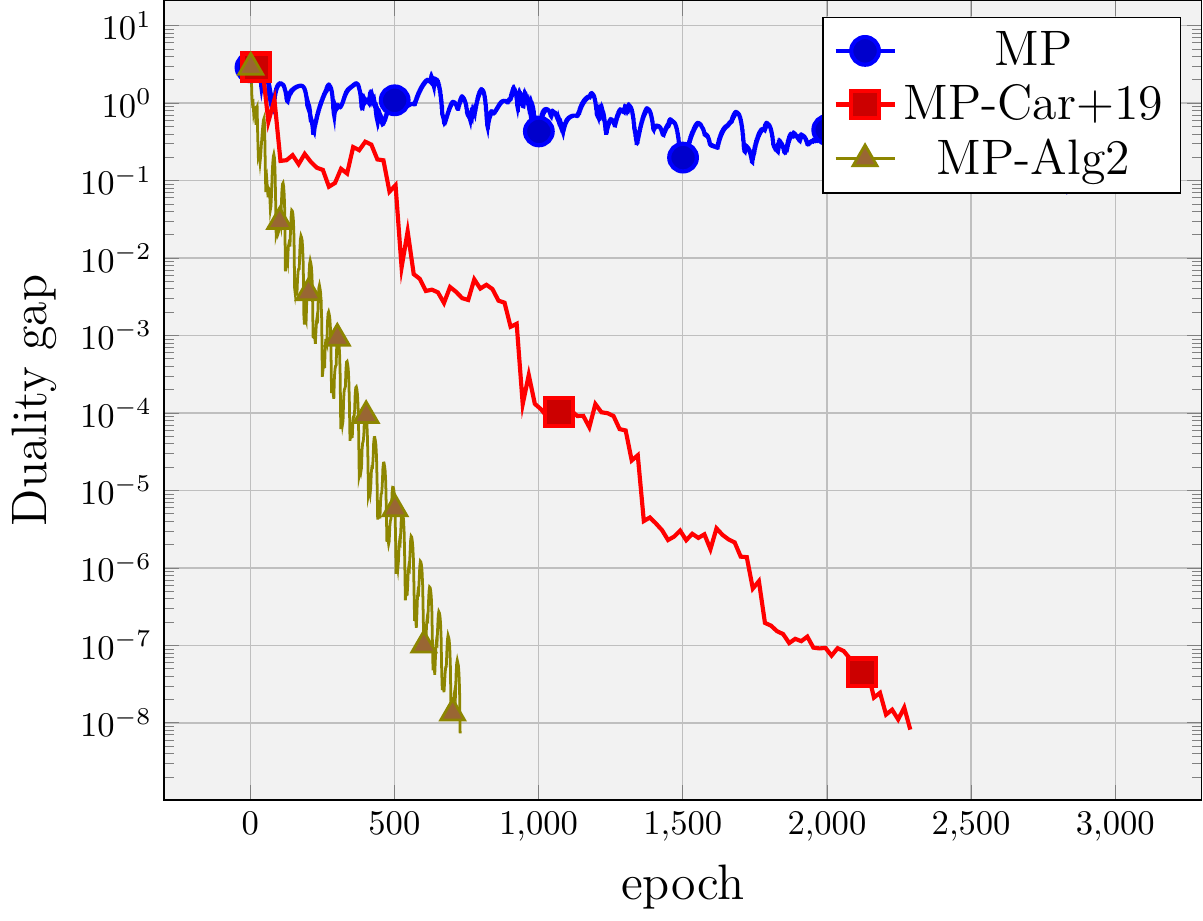}
\includegraphics[width=0.32\columnwidth]{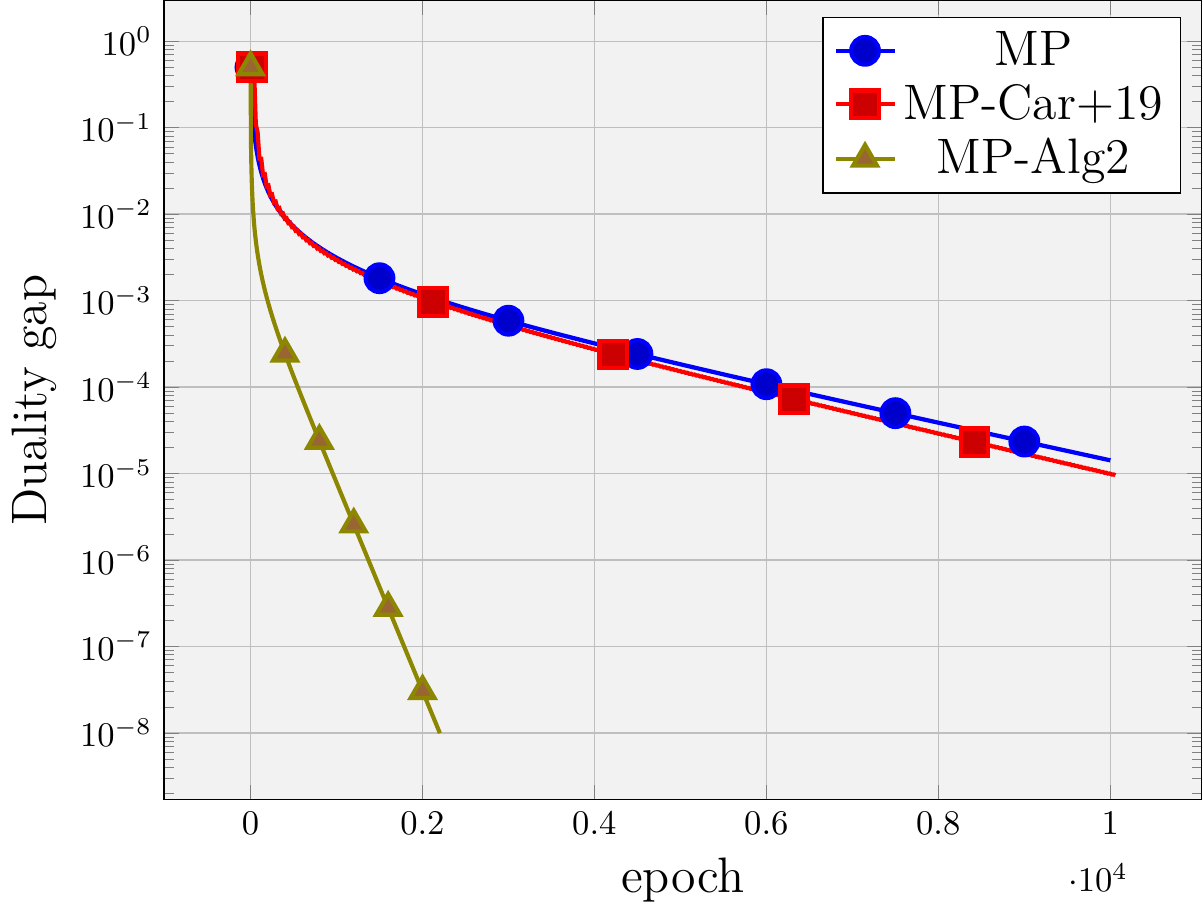}
\includegraphics[width=0.32\columnwidth]{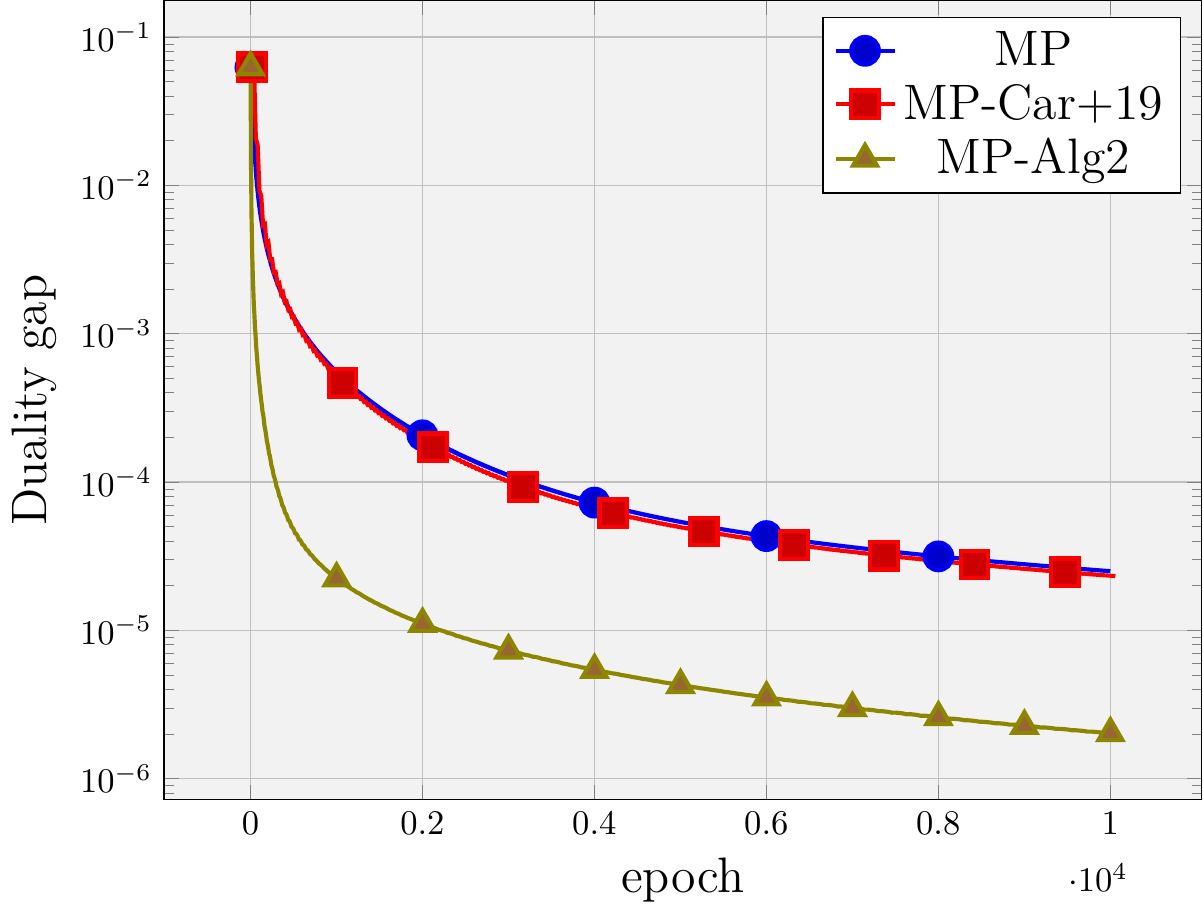}
\caption{Entropic setup. The same matrices in~\Cref{fig:1} used in the same arrangement.}
\label{fig:2}
\end{center}
\vskip -0.3cm
\end{figure*}
\section{Numerical experiments}\label{sec: numexp}
In this section, we  provide preliminary empirical evidence\footnote{Code can be
  found in \href{https://github.com/ymalitsky/VR_for_VI}{https://github.com/ymalitsky/vr\_for\_vi}}  on how variance
reduced methods for VIs perform in practice. By no means, this report
is exhaustive, but only an illustration for showing \textit{(i)} variance reduction helps in practice compared to deterministic methods and $\textit{(ii)}$ our approach is not only more general in theory but also offers practical advantages compared to the previous approach in~\cite{carmon2019variance}.

We focus on matrix games with simplex constraints in the Euclidean and entropic setups. In the Euclidean step, we use the projection to simplex from~\cite{condat2016fast}.
We
compare deterministic extragradient (EG), existing variance-reduced
method~\cite{carmon2019variance} (EG-Car+19) and proposed Alg.~\ref{alg:
  eg1} and Alg.~\ref{alg: mp1}. To distinguish from the Euclidean case, we
write `MP' instead of `EG' for all algorithms.  We have chosen three
test problems used in the literature~\cite{nemirovski2013mini,nemirovski2009robust} and fixed $m=n=500$.

For all problems, we use the largest step sizes allowed by theory.
In particular, EG uses $1/L_F$, where $L_F$ is the Lipschitz constant of the overall operator $F$.
We also use the reported parameters from~\cite{carmon2019variance} for EG-Car+19.
In the Euclidean case, by tracing the proof of~\cite[Proposition 2]{carmon2019variance}, we observed that one can improve the step size from $\eta = \frac{\alpha}{10L^2}$ to $\eta = \frac{\alpha}{4L^2}$, where $\alpha$ is defined to be $\frac{L\sqrt{m+n}}{\sqrt{\nnz(A)}}$ therein.
Therefore, we use the improved step size for EG-Car+19 for experiments with Euclidean setup.
However, in the Bregman setup, we did not find a way to improve the step size of EG-Car+19, so we use the reported one.

In our methods, we use the parameters from Remarks~\ref{rem: eg} and~\ref{rem: mp}.
For performance measure, we use duality gap, which can be simply computed as $\max_{i} (A\bx)_i - \min_j (A^\top \by)_j$ due to simplex constraints.
Cost of computing one $F$ is counted as an epoch, and the cost of stochastic oracles are counted accordingly to match the overall cost.

We report the results in Figures~\ref{fig:1} and~\ref{fig:2}.
We see that variance reduced variants consistently outperform deterministic EG in all cases, as predicted in theory.
Within variance reduced methods, due to the small step sizes of EG-Car+19, except the first dataset in the Euclidean setup, we observe our algorithms to also outperform EG-Car+19.
Especially in the Bregman setting, the difference is noticeable since the analysis of EG-Car+19 requires smaller step sizes.

\section{Conclusions}
We conclude by discussing a few potential directions that our results
could pave the way for.\\[2mm]
\textbf{Sparsity.}  An important consideration in practice is to adapt to sparsity of the data.  The recent work by~\cite{carmon2020coordinate} built on the algorithm in~\cite{carmon2019variance} and improved the complexity for matrix games in Euclidean setup, for sparse data, by using specialized data structures.  We suspect that these techniques can also be used in our
algorithms.\\[2mm]
\textbf{Stochastic oracles.}  As we have seen for bilinear and nonbilinear problems, harnessing the structure is very important for devising suitable stochastic oracles with small Lipschitz constants.
On top of our algorithms, an interesting direction is to study important nonbilinear min-max problems and devise particular Bregman distances and stochastic oracles to obtain complexity improvements.\\[2mm]
\textbf{New algorithms.}  For brevity, we only showed the application of our techniques for extragradient, FBF, and FoRB methods.  However, for more structured problems other extensions might be more suitable. Such structured problems arise, for example, when only partial strong convexity is present or when $F$ is the sum of a skew-symmetric matrix and a gradient of a convex function.

\section{Appendix}\label{sec: appendix}
\begin{proof}[Proof of~\Cref{th: eg1_th1}]
By the proof of~\Cref{lem: eg1}, without removing the term~$- \alpha
\| \bz_{k+1/2} - \bz_k\|^2$ in~\eqref{eq: eg1_ip2}, we have
\begin{multline}\label{eq: as_eg_1}
\mathbb{E}_k \left[ \Phi_{k+1}(\bz_\ast) \right] \leq \Phi_k(\bz_\ast) - (1-\gamma) \left((1-\alpha) \| \bz_{k+1/2} - \bw_{k} \|^2 + \mathbb{E}_k\left[\| \bz_{k+1} - \bz_{k+1/2} \|^2\right]\right) \\
- \alpha \| \bz_{k+1/2} - \bz_k\|^2.
\end{multline}
By Robbins-Siegmund theorem~\cite[Theorem 1]{robbins1971convergence},
we have that  $\Phi_k(\bz_\star)$ converges a.s.\
 and $\| \bz_{k+1/2} - \bz_k \|$, $\|\bz_{k+1/2}-\bw_k \|$ converges to $0$ a.s.

Let $\mathsf{Z_k} = \begin{bmatrix}\bz_k\\ \bw_k\end{bmatrix}$ and
$\mathsf{Z}_\ast = \begin{bmatrix} \bz_\ast \\ \bz_\ast \end{bmatrix}$,
then $\Phi_k(\bz_\ast) =\alpha\|\bz_k - \bz_\ast\|^2 + \frac{1-\alpha}{p}\|\bw_k - \bz_\ast \|^2 = \| \mathsf{Z}_k - \mathsf{Z}_\ast \|^2_Q$
with
$Q =
\mathrm{diag}(\alpha,\dots,\alpha,\frac{1-\alpha}{p},\dots,\frac{1-\alpha}{p})$.
Then applying \cite[Proposition~2.3]{combettes2015stochastic} to the
inequality $\mathbb{E}_k\| \mathsf{Z}_{k+1} - \mathsf{Z}_\ast \|^2_Q\leq \|
\mathsf{Z}_{k} - \mathsf{Z}_\ast \|^2_Q$, we can construct $\Xi$, with
$\mathbb{P}(\Xi) = 1$, such that for all $ \theta\in\Xi$ and $\forall \bz_\ast \in \Sol$
$\| \mathsf{Z}_k(\theta) - \mathsf{Z}_\ast \|_Q$ converges and
therefore, there exists $\Xi$ with $\mathbb{P}(\Xi)=1$, such that
\begin{equation}\label{eq: globalization_eq}
\text{$\forall \theta\in\Xi$ and $\forall \bz_\ast\in\Sol$,\quad
$ \alpha \| \bz_k(\theta) - \bz_\ast \|^2 +
\frac{1-\alpha}{p}\|\bw_k(\theta)-\bz_\ast \|^2$ converges.}
\end{equation}
Moreover, by taking total expectation on~\eqref{eq: as_eg_1}, we get $\sum_{k=1}^\infty \mathbb{E} \| \bz_{k+1} - \bz_{k+1/2} \|^2 < \infty$. By Fubini-Tonelli theorem, we have $\mathbb{E}\left[\sum_{k=1}^\infty \| \bz_{k+1} - \bz_{k+1/2} \|^2\right] < \infty$ and since $\sum_{k=1}^{\infty}\| \bz_{k+1} - \bz_{k+1/2} \|^2$ is nonnegative, $\sum_{k=1}^\infty \| \bz_{k+1} - \bz_{k+1/2} \|^2 < \infty$ a.s.\ and thus $\| \bz_{k+1} - \bz_{k+1/2} \|$ converges to $0$ a.s.

Let $\Xi'$ be the probability $1$ set such that for all $
\theta\in\Xi'$, $\bz_{k+1}(\theta) - \bz_{k+1/2}(\theta)\to0$,
$\bz_{k+1/2}(\theta) - \bz_k(\theta) \to 0$, and $\bz_{k+1/2}(\theta)-\bw_k(\theta)
\to 0$. Pick $\theta\in\Xi\cap\Xi'$ and let $\tilde{\mathbf{z}}(\th)$  be a cluster
point of the bounded sequence $(\bz_k(\theta))$. From $\bz_{k+1/2}(\theta) - \bz_k(\theta) \to 0$ and
$ \bz_{k+1/2}(\theta)-\bw_k(\theta)\to 0$ it follows that $\tilde{\mathbf{z}}(\theta)$ is
also a cluster point of $(\bw_k(\theta))$.

By prox-inequality \eqref{eq: prox_ineq} applied to the definition of $\bz_{k+1}$,
\begin{multline}
\langle \bz_{k+1}(\theta) - \bzb_k(\theta) + \tau F(\bw_k(\theta)) - \tau F_{\xi_k}(\bz_{k+1/2}(\theta)) + \tau F_{\xi_k}(\bw_k(\theta)), \bz - \bz_{k+1}(\theta) \rangle \\
+ \tau g(\bz) - \tau g(\bz_{k+1}(\theta)) \geq 0, \quad\forall \bz\in\mathcal{Z}.\label{eq: fix_pt_vi}
\end{multline}
By extracting the subsequence of $\bz_{k}(\theta)$ if needed, taking the
limit along that subsequence and using the lower semicontinuity of
$g$, we deduce that  $\tilde{\mathbf{z}}(\theta)\in\mathsf{Sol}$.
In doing so, we also used that $(\bz_{k+1}(\theta))$ is bounded and
$F_{\xi}$ is continuous for all $\xi$ to deduce
 $\tau\langle F_{\xi_k}(\bw_{k}(\theta)) - F_{\xi_k}(\bz_{k+1/2}(\theta)), \bz - \bz_{k+1}(\theta) \rangle \to 0$. Moreover, since $\bz_{k+1}(\theta) - \bz_k(\theta) \to 0$ and $\bz_{k+1}(\theta)-\bw_k(\theta)\to0$, it follows that $\bz_{k+1}(\theta)-\bzb_{k}(\theta)\to0$.

Hence, all
cluster points of $(\bz_k(\theta))$ and $(\bw_k(\theta))$ belong to
$\mathsf{Sol}$.
We have shown that at least on one subsequence $\alpha\|\bz_k(\theta)
- \tilde{\mathbf{z}}(\th)\|^2 + \frac{1-\alpha}{p}\|\bw_k(\theta) -
\tilde{\mathbf{z}}(\th)\|^2$ converges to $0$. Then, by~\eqref{eq:
  globalization_eq}
we deduce $\alpha\|\bz_k(\theta) - \tilde{\mathbf{z}}(\th)\|^2 + \frac{1-\alpha}{p}\|\bw_k(\theta) - \tilde{\mathbf{z}}(\th)\|^2 \to 0$ and consequently $\|\bz_k(\theta) - \tilde{\mathbf{z}}(\th)\|^2\to 0$. This shows $(\bz_k)$ converges almost surely to a point in $\mathsf{Sol}$.
\end{proof}
\begin{proof}[Proof of~\Cref{lem:prox_update_br}]
  By optimality of $\bz^+$,
\[  0 \in \partial g(\bz^+) + \bu + \a\left(\nabla h(\bz^+)-\nabla
  h(\bz_1)\right) + (1-\a)\left(\nabla h(\bz^+)-\nabla h(\bz_2)\right).\]
This implies by convexity of $g$
\[g(\bz)-g(\bz^+)\geq \lr{\bu + \a\left(\nabla h(\bz^+)-\nabla
    h(\bz_1)\right) + (1-\a)\left(\nabla h(\bz^+)-\nabla h(\bz_2)\right), \bz^+-\bz}.\]
By applying three point identity twice, we deduce
\begin{align*}
g(\bz)-g(\bz^+) + \lr{\bu, \bz-\bz^+}\geq  \a &\left(
  D(\bz, \bz^+) + D(\bz^+, \bz_1) - D(\bz, \bz_1)  \right) \\
+ (1-\a) &\left(
  D(\bz, \bz^+) + D(\bz^+, \bz_2) - D(\bz, \bz_2)  \right)
\end{align*}
and by a simple rearrangement we obtain the result.
\end{proof}

\begin{proof}[Proof of~\Cref{lem: exp_max_lem}]
First, we define the sequence ${\bx}_{k+1} = {\bx}_k + \bu_{k+1}$. It is easy to see that $\bx_{k}$ is $\mathcal{F}_k$-measurable.
Next, by using the definition of $(\bx_{k})$, we have
\begin{equation*}
\| \bx_{k+1} - \bx \|^2 = \|  \bx_k - \bx\|^2 + 2\langle \bu_{k+1},  \bx_k - \bx \rangle + \| \bu_{k+1} \|^2.
\end{equation*}
Summing over $k=0,\dots, K-1$, we obtain
\begin{align*}
\sum_{k=0}^{K-1} 2\langle \bu_{k+1}, \bx -  \bx_k\rangle \leq \| \bx_0 - \bx\|^2 + \sum_{k=0}^{K-1} \| \bu_{k+1} \|^2.
\end{align*}
Next, we take maximum of both sides and then expectation
\begin{equation*}
\mathbb{E}\left[\max_{\bx\in\mathcal{C}}\sum_{k=0}^{K-1} \langle \bu_{k}, \bx\rangle\right] \leq \max_{\bx\in\mathcal{C}}\frac{1}{2}\| \bx_0 - \bx\|^2 +\frac 12 \sum_{k=0}^{K-1} \mathbb{E}\left[\| \bu_{k+1} \|^2\right] + \sum_{k=0}^{K-1} \mathbb{E}\left[\langle \bu_{k+1},  \bx_k \rangle\right].
\end{equation*}
We use the tower property, $\mathcal{F}_k$-measurability of $ \bx_k$, and $\mathbb{E}\left[\bu_{k+1} | \mathcal{F}_k\right] = 0$ to finish the proof, since\\ $\sum_{k=0}^{K-1} \mathbb{E}\left[\langle \bu_{k+1},  \bx_k \rangle\right]=\sum_{k=0}^{K-1} \mathbb{E}\left[ \langle \mathbb{E}\left[\bu_{k+1} | \mathcal{F}_k\right],  \bx_k \rangle \right] = 0$.
\end{proof}

\begin{proof}[Proof of~\Cref{lem: exp_max_lem_br}]
Define for each $s\geq 0$ and for $k\in\{0,\dots, K-1\}$,
\begin{equation*}
 \bx_{k+1}^s = \argmin_{\bx\in\dom g} \{ \langle -\bu_{k+1}^s, \bx \rangle + D(\bx, \bx_k^s) \}, \text{ and let }  \bx_{0}^{s+1} =  \bx_m^{s}.
\end{equation*}
First, we observe $\bx_{k}^s$ is $\mathcal{F}_k^s$-measurable.
By the definition of $ \bx_{k+1}^s$, we have for all $\bx\in \dom g$,
\begin{align*}
\langle \nabla h( \bx_{k+1}^s) - \nabla h( \bx_k^s) - \bu_{k+1}^s, \bx -  \bx_{k+1}^s \rangle \geq 0.
\end{align*}
We apply three point identity to obtain
\begin{align*}
D(\bx,  \bx_k^s) - D(\bx,  \bx_{k+1}^s) - D( \bx_{k+1}^s, \bx_k^s) - \langle \bu_{k+1}^s, \bx -  \bx_{k+1}^s \rangle \geq 0.
\end{align*}
We manipulate the inner product by using H\"older's, Young's inequalities, and strong convexity of $h$,
\begin{align*}
\langle \bu_{k+1}^s, \bx -  \bx_{k+1}^s \rangle &= \langle \bu_{k+1}^s, \bx -  \bx_{k}^s \rangle + \langle \bu_{k+1}^s,  \bx_k^s -  \bx_{k+1}^s \rangle\\
&\geq \langle \bu_{k+1}^s, \bx -  \bx_{k}^s \rangle - \frac{1}{2} \| \bu_{k+1}^s \|^2_{\ast} - \frac{1}{2} \|  \bx_{k+1}^s -  \bx_k^s \|^2\\
&\geq \langle \bu_{k+1}^s, \bx -  \bx_{k}^s\rangle - \frac{1}{2} \| \bu_{k+1}^s \|^2_{\ast} - D( \bx_{k+1}^s,  \bx_k^s),
\end{align*}
which, combined with the previous inequality gives
\begin{align*}
\langle \bu_{k+1}^s, \bx \rangle \leq D(\bx,  \bx_k^s) - D(\bx,  \bx_{k+1}^s) + \langle \bu_{k+1}^s,  \bx_k^s\rangle + \frac{1}{2} \| \bu_{k+1}^s \|^2_\ast.
\end{align*}
We sum this inequality over $k=0,\dots,K-1$ and $s= 0,\dots, S-1$, take
maximum, use ${\bx}_0^{s+1}={\bx}_K^s$ and  the same
derivations as at the end of the proof of~\Cref{lem: exp_max_lem} to show $\sum_{s=0}^{S-1}\sum_{k=0}^{K-1}\mathbb{E}\left[ \lr{\bu_{k+1}^s, \bx_k^s }\right]=0$.
\end{proof}

\section*{Acknowledgments}
Part of the work was done while A. Alacaoglu was at EPFL.
The work of A. Alacaoglu has received funding from the NSF Award  2023239;  DOE  ASCR  under  Subcontract  8F-30039  from  Argonne  National  Laboratory; the European
Research Council (ERC) under the European Union's Horizon 2020
research and innovation programme (grant agreement no $725594$ -
time-data).
The work of Y. Malitsky was supported by the Wallenberg
Al, Autonomous Systems and Software Program (WASP) funded by the Knut
and Alice Wallenberg Foundation.  The project number is 305286.

\printbibliography

\end{document}